\theoremstyle{plain}
\newtheorem*{thma}{Theorem A}
\newtheorem*{thmb}{Theorem B}
\newtheorem*{thmc}{Theorem C}
\theoremstyle{definition}
\newtheorem{theorem}{Theorem}[section]
\newtheorem{prop}[theorem]{Proposition}
\newtheorem{lemma}[theorem]{Lemma}
\newtheorem{defn}[theorem]{Definition}
\newtheorem{rmk}[theorem]{Remark}
\newtheorem{exam}[theorem]{Example}
\newtheorem{subsec}[theorem]{}
\newtheorem{thm}[theorem]{Theorem}
\newtheorem{notation}[theorem]{Notation}
\theoremstyle{remark}
\newtheorem{ack}[theorem]{Acknowledgements}
\theoremstyle{definition}
\newcommand{\C}{{\mathbb C}}
\newcommand{\Hyp}{{\mathbb H}}
\newcommand{\Z}{{\mathbb{Z}}}
\newcommand{\R}{{\mathbb R}}
\newcommand{\Q}{{\mathbb Q}}
\newcommand{\Hom}{\mbox{Hom}}
\newcommand{\upi}{\underline{\pi}}
\newcommand{\uE}{\underline{E}}
\newcommand{\uZp}{\underline{\Z/p}}
\newcommand{\OG}{\mathcal{O}_G}
\newcommand\A{{\mathbb A}}
\newcommand\BB{{\mathcal B}}
\newcommand\CC{{\mathcal C}}
\newcommand\FF{{\mathcal F}}
\newcommand\LL{{\mathcal L}}
\newcommand\MM{{\mathcal M}}
\newcommand\PP{{\mathcal P}}
\newcommand\RR{{\mathcal R}}
\newcommand\TT{{\mathcal T}}
\newcommand\UU{{\mathcal U}}
\newcommand\PMF{{\PP\kern-2pt\MM\FF}}
\newcommand\PML{{\PP\kern-2pt\MM\LL}}
\newcommand\ep{\epsilon}
\newcommand\tr{\operatorname{tr}}
\newcommand{\pair}[1]{\langle #1\rangle}
\newcommand{\Id}{\mbox{Id}}
\newcommand{\Ext}{\mbox{Ext}}
\newcommand{\T}{{\mathbb T}}
\newcommand{\fsubd}{\mathrel{{\scriptstyle\searrow}\kern-1ex^d\kern0.5ex}}
\newcommand{\bsubd}{\mathrel{{\scriptstyle\swarrow}\kern-1.6ex^d\kern0.8ex}}
\newcommand{\fsubeq}{\mathrel{\raise-.7ex\hbox{$\overset{\searrow}{=}$}}}
\newcommand{\bsubeq}{\mathrel{\raise-.7ex\hbox{$\overset{\swarrow}{=}$}}}
\newcommand{\tsh}[1]{\left\{\kern-.9ex\left\{#1\right\}\kern-.9ex\right\}}
\newcommand{\ra}{\rightarrow}
\newenvironment{myeq}[1][]
{\stepcounter{theorem}\begin{equation}\tag{\thetheorem}{#1}}
	{\end{equation}}
\newenvironment{mysubsection}[2][]
{\begin{subsec}\begin{upshape}\begin{bfseries}{#2.}
			\end{bfseries}{#1}}
		{\end{upshape}\end{subsec}}
\newcommand{\cA}{\mathcal A}
\newcommand{\res}{\mathit{res}}
\newcommand{\uA}{\underline{A}}
\newcommand{\uM}{\underline{M}}
\newcommand{\uH}{\ushort{H}}
\newcommand{\uZ}{\underline{\mathbb{Z}}}
\def\Z{\mathbb{Z}}
\def\bp{\begin{prop}}
	\def\ep{\end{prop}}
\def\a{\alpha}
\def\lam{\lambda}
\def\sU{\mathcal{U}}
\def\C{\mathbb{C}}
\def\b*{\bigstar}
\def\ra{\rightarrow}
\def\xra{\xrightarrow}
\numberwithin{equation}{section}
\tikzset{notestyleraw/.append style={align=justify}}
\newcommand{\smas}{\wedge}
\newcommand{\bigwedg}{\bigvee}
\def\tH{\tilde{H}}
\definecolor{frenchblue}{rgb}{0.0, 0.45, 0.73}
\newcommand{\changeoperator}[1]{%
	\csletcs{#1@saved}{#1@}%
	\csdef{#1@}{\changed@operator{#1}}%
}
\newcommand{\changed@operator}[1]{%
	\mathop{%
		\mathchoice{\textstyle\csuse{#1@saved}}
		{\csuse{#1@saved}}
		{\csuse{#1@saved}}
		{\csuse{#1@saved}}%
	}%
}
\newcommand{\Zp}{\underline{{\mathbb Z}/p}}
\newcommand{\Zt}{\underline{{\mathbb Z}/2}}
\newcommand{\bH}{{\mathbb H}}
\newcommand{\pt}{\textup{pt}}
\renewcommand{\res}{\operatorname{res}}
\title[]{Equivariant cohomology of projective spaces}
\author{Samik Basu, Pinka Dey, Aparajita Karmakar}
\address{Stat-Math Unit,
	Indian Statistical Institute,
	B. T. Road, Kolkata-700108, India.}
\email{samik.basu2@gmail.com; samikbasu@isical.ac.in}
\address{Stat-Math Unit,
	Indian Statistical Institute,
	B. T. Road, Kolkata-700108, India.}
\email{pinkadey11@gmail.com;}
\address{Stat-Math Unit,
	Indian Statistical Institute,
	B. T. Road, Kolkata-700108, India.}
\email{aparajitakarmakar@gmail.com;}
\subjclass[2020]{Primary: 55N91, 57S17; Secondary: 55P91.}
\keywords{Projective spaces, equivariant classifying spaces, equivariant homotopy, equivariant homology.}
\begin{document}
	\begin{abstract}
We compute the equivariant homology and cohomology of projective spaces with integer coefficients. More precisely, in the case of cyclic groups, we show that the cellular filtration of the projective space $P(k\rho)$, of lines inside copies of the regular representation, yields a splitting of $H\uZ\wedge P(k\rho)_+$ as a wedge of suspensions of $H\uZ$. This is carried out both in the complex case, and also in the quaternionic case, and further, for the $C_2$ action on $\C P^n$ by complex conjugation. We also observe that these decompositions imply a degeneration of the slice tower in these cases. Finally, we describe the cohomology of the projective spaces when $|G|=p^m$ of prime power order, with explicit formulas for $\uZp$-coefficients. Letting $k=\infty$, this also describes the equivariant homology and cohomology of the classifying spaces of $S^1$ and $S^3$. 
	\end{abstract}
	\maketitle

	\section{Introduction}
	The purpose of this paper is to discuss new calculations for the equivariant cohomology of complex projective spaces. Given a complex representation $V$ of a group $G$, one obtains a ``linear'' $G$-action on $P(V)$ $=$ the space of lines in $V$. The underlying space here is $\C P^{\dim(V)-1}$ whose homology computation is well-known.  The Borel-equivariant cohomology, which is the cohomology of the Borel construction, is easy to calculate as the space $P(V)$ has non-empty fixed points. 
	
	The equivariant cohomology used in this paper refers to an ``ordinary'' cohomology theory represented in the equivariant stable homotopy category \cite[Ch. XIII \S 4]{May-EquivBook}.  In this context, the word ``ordinary'' means that the equivariant homotopy groups of the representing spectrum $HM$  are concentrated in degree $0$. At this point, one notes that the equivariant stable homotopy category possesses additional structure which makes the equivariant homotopy groups part of a Mackey functor \cite[Ch. IX \S 4]{May-EquivBook}. Conversely, given a Mackey functor $\uM$, one has an associated ``ordinary'' equivariant cohomology  theory represented by an Eilenberg-MacLane spectrum $H\uM$ \cite[Theorem 5.3]{GreenleesMay}, which is unique up to homotopy.
	
	A Mackey functor $\uM$ \cite{Dre72} comprises a pair of functors $(\uM_\ast, \uM^{\ast})$ from the category $\OG$ (of finite $G$-sets) to Abelian groups taking disjoint unions to direct sums, such that $\uM_\ast$ is covariant and $\uM^\ast$ is contravariant, taking the same value on a given $G$-set $S$, denoted $\uM(S)$. These are required to be compatible in the sense of a double coset formula \cite{TW95}. The covariant  structure gives restriction maps 
	\[ \res^H_K : \uM(H) \to \uM(K), \mbox{ for } K \subset H,\]      
	and the contravariant structure gives transfer maps 
	\[ \tr^H_K : \uM(K) \to \uM(H), \mbox{ for } K\subset H.\]
The two important examples in the context of equivariant cohomology are the Burnside ring Mackey functor $\uA$, and the constant Mackey functor $\uZ$. The  Mackey functor $\uZ$ sends each $G/H \mapsto \Z$, with $\res^H_K=\Id$ and $\tr^H_K=$ multiplication by the index $[H: K]$. The Burnside ring Mackey functor $\uA$ sends $G/H\mapsto A(H)$, the ring generated by isomorphism classes of finite $H$-sets. The restriction maps $\res^H_K$ for $\uA$ are described as the restriction of  the action of $H$ to $K$, and the transfer maps $\tr^H_K$ are described by induction $S \mapsto H\times_K S$.   
	
	The category of Mackey functors is an Abelian category. It also has a symmetric monoidal structure given by $\Box$, whose unit object is the Burnside ring Mackey functor $\uA$. The constant Mackey functor $\uZ$ is  a commutative monoid, which implies that the cohomology groups with $\uZ$-coefficients possess a graded commutative ring structure. The spectrum $H\uA$ is a homotopy commutative ring spectrum, and the commutative monoids give homotopy commutative $H\uA$-algebras. However, if one tries to rigidify the construction of Eilenberg-MacLane spectra into a functor taking values in equivariant orthogonal spectra, there are obstructions coming from norm maps \cite{Ull13,HHR}. 
	
	Our focus in this paper is on computations of equivariant cohomology for $G$-spaces. A simple-minded approach would be to break up the spaces into equivariant cells, and compute via cellular homology.  An equivariant $G$-CW complex has cells of the form $G/H\times D^n$, which are attached along maps from $G/H\times S^{n-1}$ onto lower skeleta. Via this argument, one shows $H^n_G(X;\uZ)\cong H^n(X/G;\Z)$. While working through concrete examples like the projective spaces $P(V)$, or for $G$-manifolds, we see that there is no systematic way of breaking these up into cells of the form $G/H\times D^n$ or identifying the space $X/G$. In these cases, the spaces may be naturally built out of cells of the form $G\times_H D(V)$\cite{Was69}, where $V$ is a unitary $H$-representation, and $D(V)$ stands for the unit disk 
	\[ D(V)= \{ v\in V \mid \pair{v,v} \leq 1 \} .\] 

	In the equivariant stable homotopy category, the representation spheres $S^V$ (defined as the one-point compactification of $V$) are invertible in the sense that there is a $S^{-V}$, such that $S^{-V} \wedge S^V \simeq S^0$. Consequently, the equivariant cohomology becomes $RO(G)$-graded \cite[Ch. XIII]{May-EquivBook}, and so, computations for the $G$-CW complexes above require the knowledge of $H^\alpha_G(G/H;\uM)$ for $\alpha \in RO(G)$ and $H\subset G$. Such computations exist in the literature only for a handful of finite groups, namely, for $G=C_p$ for $p$ prime \cite{Lewis}, $G=C_{p_1\cdots p_k}$ for distinct primes $p_i$ \cite{BG19,BG20}, and $G=C_{p^2}$ \cite{Zeng}. For restricted $\alpha$ belonging to certain sectors of $RO(G)$ more computations are known \cite{HHR, HHR-slicess,HoKr17, BasuDey}.   
	
	Equivariant cohomology of $G$-spaces have been carried out for many of the groups above \cite{Haz21, Hog21, BasuDeyKarmakar-NYJM}. Most of these computations are done in the case where the complete calculations for  $H^\alpha_G(G/H;\uZ)$ are known. Many of these occur in cases where the cohomology is a free module over $\pi_\bigstar H\uZ$. There are various structural results which imply the conclusion that the cohomology is a free module \cite{May20,BasuGhosh-cp}. For the equivariant projective spaces, this is  particularly relevant, and although, the entire knowledge of $\pi_\bigstar^{C_n}H\uZ$ is still unknown, we are able to prove that the cohomology $P(V)$ is free when $V$ is a direct sum of copies of the regular representation. (see Theorems \ref{thm main simple} and \ref{quatadd})
	\begin{thma} Let $G=C_n$. We have the following decompositions. \\
a) Write $\phi_{0}= 0$ and $\phi_{i} = \lam^{-i}(1_\C+\lam + \lam^2 +\cdots + \lam^{i-1})$ for $i> 0$. Then, 		
$$H\uZ\smas P(m\rho_\C)_+ \simeq \bigvee_{i=0}^{nm-1}H\uZ\smas S^{\phi_{i}},$$ 
		$$H\uZ\smas B_GS^1_+ \simeq \bigvee_{i=0}^{\infty}H\uZ\smas S^{\phi_{i}}.$$
b) For the quaternionic case, we have for $W_k = \lambda^{-k}\Big(\sum_{i=0}^{k-1}(\lambda^i + \lambda^{-i})\Big)$,
$$H\uZ \smas P_{\mathbb H}(m\rho_\Hyp)_+ \simeq \bigvee_{i=0}^{mn-1}H\uZ\smas S^{W_i},$$ 
		$$H\uZ\smas B_GS^3_+ \simeq \bigvee_{i=0}^{\infty}H\uZ\smas S^{W_{i}}.$$
	\end{thma}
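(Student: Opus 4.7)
The plan is to prove both parts by induction using a flag filtration of the projective space. For the complex case, fix a complete flag of complex sub-representations $V_{0}\subset V_{1}\subset\cdots\subset V_{mn}=m\rho_{\C}$ with $V_{i+1}/V_{i}\cong\lambda^{i}$, cycling through the $n$ irreducible summands of $\rho_{\C}$ in each of the $m$ copies. This yields a cellular filtration
\[ \ast=P(V_{1})\subset P(V_{2})\subset\cdots\subset P(V_{mn})=P(m\rho_{\C}). \]
Parameterising lines in $V_{i+1}$ not contained in $V_{i}$ by $\{[v:1]:v\in V_{i}\}$ and tracking how the $G$-action on $V_{i}$ is twisted by $\lambda^{-i}$ when one renormalises the representative, one immediately obtains $P(V_{i+1})/P(V_{i})\simeq S^{\phi_{i}}$ with $\phi_{i}=\lambda^{-i}V_{i}$, matching the statement.

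The main step is to prove inductively that, after smashing with $H\uZ$, the cofibre sequence
\[ H\uZ\smas P(V_{i})_{+}\longrightarrow H\uZ\smas P(V_{i+1})_{+}\longrightarrow H\uZ\smas S^{\phi_{i}} \]
splits as a sequence of $H\uZ$-modules, which is equivalent to producing a class in $\tilde H^{\phi_{i}}_{G}(P(V_{i+1});\uZ)$ whose restriction to the top cell is the Thom generator. I would construct such a class as the Poincar\'e--Lefschetz dual of the embedding $P(V_{i})\hookrightarrow P(V_{i+1})$: the inclusion $V_{i}\hookrightarrow V_{i+1}$ composed with the projection $V_{i+1}\twoheadrightarrow\lambda^{i}$ defines an equivariant section of the line bundle $L^{*}\otimes\lambda^{i}$ over $P(V_{i+1})$ whose vanishing locus is precisely $P(V_{i})$, and the associated Pontryagin--Thom collapse yields the required extension. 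Alternatively, and perhaps more cleanly, one can invoke the structural freeness criteria of \cite{May20,BasuGhosh-cp}, verifying their hypotheses against the observation that the $G$-fixed points of $P(m\rho_{\C})$ decompose as a disjoint union of ordinary complex projective spaces, all of which have free singular cohomology.

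The quaternionic argument follows the same template. One filters $P_{\Hyp}(m\rho_{\Hyp})$ by the projective spaces of a flag of quaternionic sub-representations with successive quotients $\lambda^{k}_{\Hyp}$; the chart computation is modified by right multiplication by a quaternion generator of $V_{k+1}/V_{k}$, and since this mixes $\lambda^{j}$ with $\lambda^{-j}$ under the complex-structure restriction of $\Hyp$, the chart representation is exactly $W_{k}=\lambda^{-k}\bigl(\sum_{j=0}^{k-1}(\lambda^{j}+\lambda^{-j})\bigr)$. The splitting argument is then identical. The classifying space statements follow by taking $m\to\infty$, since $B_{G}S^{1}=\colim_{m}P(m\rho_{\C})$ and $B_{G}S^{3}=\colim_{m}P_{\Hyp}(m\rho_{\Hyp})$, and $H\uZ\smas(-)$ commutes with this sequential filtered colimit. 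The principal obstacle is the splitting step itself: either one constructs the extension classes explicitly by Thom/Euler-class geometry, or one verifies the hypotheses of an abstract freeness theorem; once this is in hand, the wedge decomposition propagates without further difficulty.
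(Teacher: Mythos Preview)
Your filtration and cofibre sequences are exactly those the paper uses, but the splitting step is where your proposal and the paper diverge, and your version has genuine gaps. The paper does not construct a section; instead it shows that the connecting map
\[
H\uZ\smas S^{\phi_{k+1}}\longrightarrow \Sigma\bigl(H\uZ\smas P(V_k)_+\bigr)\simeq \bigvee_{j\le k}\Sigma^{\phi_j+1}H\uZ
\]
is null, by observing that each component lies in $\upi^G_{\phi_{k+1}-\phi_j-1}(H\uZ)$ and then invoking a dedicated vanishing result (Theorem~\ref{thm:BG Cn}): $\upi^G_\alpha(H\uZ)=0$ whenever every $|\alpha^H|$ is odd and $\ge -1$. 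Checking that $\alpha=\phi_{k+1}-\phi_j-1$ meets this hypothesis is a one-line fixed-point count, and the quaternionic case is handled by the same template with $W_k$ in place of $\phi_k$. This vanishing theorem is the real content; it is proved earlier in the paper via Anderson duality and the structure of $\pi^G_{\bigstar}H\uZ$, and is not something you can sidestep.

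Your proposed alternatives do not close the argument. First, a section of $\chi$ in $H\uZ$-modules corresponds to a \emph{homology} class in $\tH^G_{\phi_i}(P(V_{i+1});\uZ)$, not a cohomology class; the element $\chi^*(1)\in\tH^{\phi_i}_G(P(V_{i+1});\uZ)$ always exists and by itself carries no splitting information. Second, the line bundle $L^*\otimes\lambda^i$ restricts to \emph{different} $G$-representations at the various fixed points of $P(V_{i+1})$, so it has no $RO(G)$-graded Euler class sitting in the single degree $\phi_i$; the Pontryagin--Thom collapse you describe just reproduces the quotient map $\chi$. Third, the structural freeness theorems of \cite{May20} and \cite{BasuGhosh-cp} are specific to $C_2$ and $C_p$ respectively, so they do not cover general $C_n$. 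In short, the splitting you need is exactly where the work lies, and the paper's route through the odd-fixed-point vanishing theorem is what supplies it.
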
 
In the above expression, $\lambda$ refers to the one-dimensional complex $C_n$-representation which sends a fixed generator $g$ to  $e^{\frac{2\pi i}{n}}$, and it's powers are taken with respect to the complex tensor product. The second implications above come from the identification $B_GS^1 \simeq P(\UU) \simeq \varinjlim_m P(m\rho)$, and $B_GS^3 \simeq P_\Hyp(\Hyp\otimes_\C \UU) \simeq \varinjlim_m P_\Hyp(m\rho_\Hyp)$, where $\UU$ is a complete $G$-universe. We also carry out the computation for general $V$ when the group $G$ equals $C_p$ (Theorem \ref{thm V with uneven coeff}). One may view this as a simplification of the results in \cite{Lewis} in the case of the Mackey functor $\uZ$. For the group $C_2$, we consider $\C P^n_\tau$ where $C_2$ acts on $\C P^n$ by complex conjugation, and compute it's equivariant homology (Theorem \ref{prop:cpnconj}). In this context, one should note that results such as the above theorem are not expected for $\uA$-coefficients once the group contains either $C_{p^2}$ or $C_p\times C_p$ \cite[Remark 2.2]{Lew92},\cite{FL04}. 

As an application for the homology decomposition in Theorem A, we reprove a theorem of Caruso \cite{Caruso} stating that the cohomology operations expressible as a product of the even degree Steenrod squares  over $\Z/2$ do not occur as restriction of integer degree $C_2$-equivariant cohomology  operations. If we allow the more general $RO(C_2)$-graded operations, there are those that restrict to $Sq^i$ for every $i$ \cite{Voe03}. For the group $C_p$, the same result holds for the products of the Steenrod powers $P^i$. 

A careful analysis of the cellular filtration of the projective spaces $P(V)$ for $V=m\rho$ shows that the induced filtration on $\Sigma^2 H\uZ \wedge P(V)$ matches the slice filtration. The slice tower was defined  as the equivariant analogue of the Postnikov tower using the localizing subcategory generated by $\{{G/H}_+\wedge S^{k\rho_H} \mid k|H|\geq n\}$ instead of the spheres of the form $\{G/H_+\wedge S^n\}$. The slice filtration played a critical role in the proof of the Kervaire invariant one problem \cite{HHR} and has been widely studied since. Usually, the slice tower is an involved computation even for the spectra $\Sigma^n H\uZ$. However, for the complex projective spaces and the quaternionic projective spaces, we discover that the slice tower for the $\uZ$-homology becomes amazingly simple. More precisely, we prove the following theorem in this regard. (see Theorems \ref{thm:slice} and \ref{thm:slicequa})
\begin{thmb} Let $G=C_n$. \\
a) The slice towers of $ \Sigma^2 P(\mathcal{U})_+\smas H\uZ $ and $ \Sigma^2 P(m\rho)_+\smas H\uZ $ are degenerate and these spectra are a wedge of slices of the form $ S^V\smas H\uZ $. \\
b) The slice towers of $ \Sigma^4 P(\mathcal{U}_\bH)_+\smas H\uZ $ and $ \Sigma^4 P(m\rho_\bH)_+\smas H\uZ $ are degenerate and these spectra are a wedge of slices of the form $ S^V\smas H\uZ $.
\end{thmb}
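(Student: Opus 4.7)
My plan is to use Theorem A to reduce the question about slice towers to a statement about individual wedge summands of the form $H\uZ \wedge S^V$. By Theorem A we have the equivalences
\[
\Sigma^2 P(m\rho)_+ \wedge H\uZ \simeq \bigvee_{i=0}^{mn-1} H\uZ \wedge S^{2+\phi_i}, \qquad \Sigma^4 P_\Hyp(m\rho_\Hyp)_+ \wedge H\uZ \simeq \bigvee_{i=0}^{mn-1} H\uZ \wedge S^{4+W_i},
\]
and the corresponding identities for $P(\mathcal{U})$ and $P_\Hyp(\mathcal{U}_\Hyp)$ follow by passage to the colimit as $m \to \infty$. A wedge of slice spectra automatically has degenerate slice tower, with slices equal to the wedge summands themselves. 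It therefore suffices to prove that $H\uZ \wedge S^{2+\phi_i}$ is a slice of dimension $2+2i$ in the complex case, and $H\uZ \wedge S^{4+W_i}$ is a slice of dimension $4+4i$ in the quaternionic case.

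To verify the slice property, I would first reduce to a bounded range of indices. Writing $i = qn + r$ with $0 \leq r < n$ gives $\phi_i = q\rho_\C + \phi_r$, and since $S^{\rho_\C} \wedge H\uZ$ is itself a slice of dimension $n = |G|$, smashing with $S^{q\rho_\C}$ shifts the slice dimension by $qn$. Hence it suffices to show that $H\uZ \wedge S^{2+\phi_r}$ is a slice of dimension $2+2r$ for each $0 \leq r \leq n-1$; an analogous reduction via $\rho_\Hyp$ handles the quaternionic case. For each such base case, I would verify the two halves of the slice condition separately: that $H\uZ \wedge S^{2+\phi_r}$ is slice-$\geq(2+2r)$ by expressing the representation $2+\phi_r$ in terms of regular representations of the cyclic subgroups in the sense of the Hill--Yarnall criterion, and slice-$\leq(2+2r)$ by direct computation of the equivariant homotopy groups $\pi_\bigstar^H (H\uZ \wedge S^{2+\phi_r})$, which are explicit from the earlier results of the paper.

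The main obstacle will be the slice verification itself, especially for composite $n$ where the slice characterization of $H\uZ$-representation spheres is less classical than in the cyclic $p$-group case, requiring a subgroup-by-subgroup inspection of fixed-point dimensions $\dim(2+\phi_r)^{C_m} = 2 + 2\lfloor r/m \rfloor$ against the appropriate slice-connectivity thresholds. The suspensions $\Sigma^2$ and $\Sigma^4$ in the statement are indispensable: the bare representations $\phi_r$ and $W_r$ have only trivial fixed points at $H = G$ when $0 < r < n$, and without the extra trivial summands provided by the suspensions the summand spectra would fail the slice-$\geq$ condition at the full group. The quaternionic case parallels the complex case, with $W_r$ replacing $\phi_r$ and $\rho_\Hyp$ playing the role of $\rho_\C$; the $4$-fold suspension supplies the correct number of trivial summands because the characters in $W_r$ appear in conjugate pairs $\lambda^j$ and $\lambda^{-j}$.
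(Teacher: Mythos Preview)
Your approach is essentially the paper's: use Theorem A to split into wedge summands and then verify that each $H\uZ\wedge S^{2+\phi_\ell}$ is a $(2\ell+2)$-slice, using the Hill--Yarnall fixed-point criterion for slice $\geq(2\ell+2)$ and a direct vanishing of $[S^{k\rho_H+r},S^{\res_H(\phi_\ell)+2}\wedge H\uZ]^H$ for slice $\leq(2\ell+2)$; the paper organizes the coconnectivity check subgroup-by-subgroup (writing $\ell=qm+s$ for $H=C_m$ so that $\res_H(\phi_\ell)=2q\rho_H+\lambda+\cdots+\lambda^s$) rather than first reducing $\ell$ modulo $n$, but the content is the same. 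One minor slip: $\rho_\C$ has real dimension $2n$, so $H\uZ\wedge S^{\rho_\C}$ is a $2n$-slice and smashing with $S^{q\rho_\C}$ shifts the slice level by $2qn$, not $qn$---your stated conclusion that the base cases are $H\uZ\wedge S^{2+\phi_r}$ being $(2+2r)$-slices for $0\leq r\leq n-1$ is nonetheless correct.
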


We proceed to describe the cohomology ring structure for the projective spaces. The basic approach towards the computation comes from \cite{Lewis}, which deals with the case $G=C_p$. In this case, we observe that the generators are easy to describe, but the relations become complicated once the order of the group increases. For $G=C_n$, we show that $H^\bigstar_G(P(V);\uZ)$ are multiplicatively generated by classes $\alpha_{\phi_d}$ for $d\mid n$, in degree $\sum_{i=-d}^{-1} \lam^i$ (Proposition \ref{prop:multi gene}). However, the relations turn out to be difficult to write down in general, so we restrict our attention to prime powers $n$. In the process of figuring out the generators, we realize that there are exactly $m$ relations $\rho_j$ for $1\leq j\leq m$ ($n=p^m$), of the form 
\[u_{\lam^{p^{j-1}}-\lam^{p^j}}\alpha_{\phi_{p^j}} = \alpha_{\phi_{p^{j-1}}}^p + \mbox{ lower order terms}.\]
The explicit form turns out to be quite involved with $\uZ$-coefficients, so we determine them modulo $p$, and prove the following results. (see Theorems \ref{ringstrs1}, \ref{ringstr2}, and  \ref{cptau2}) 
\begin{thmc} 
a) 	$H^\bigstar_{C_2}(\C P^\infty_\tau;\uZ)\cong H^\bigstar_{C_2}(\pt)[\epsilon_{1+\sigma}].$\\
b) The cohomology ring 
	\[ H^\bigstar_G (B_GS^1;\uZp) \cong H^\bigstar_G(\pt;\uZp)[\a_{\phi_0},\cdots,\a_{\phi_m}]/(\rho_1,\cdots, \rho_m).\]
	The relations $\rho_r$ are described by 
	\[\rho_r= u_{\lam^{p^{r-1}}-\lam^{p^{r}}}\a_{\phi_{p^{r}}}- \TT_{r-1}^p + a_{\lambda^{p^{r-1}}}^{p-1}\TT_{r-1}\big(\prod_{i=0}^{r-2} \cA_i\big)^{p-1}, \]
	where $\TT_j$ and $\cA_j$ are defined in \eqref{btnot}. \\
c) The cohomology ring 
\[ H^\bigstar_G (B_GS^3;\uZp) \cong H^\bigstar_G(\pt;\uZp)[\beta_{2\phi_0},\cdots,\beta_{2\phi_m}]/(\mu_1,\cdots, \mu_m).\]
	The relations $\mu_r$ are described by 
	\[\mu_r= (u_{\lam^{p^{r-1}}-\lam^{p^{r}}})^2\beta_{2\phi_{p^{r}}}- \LL_{r-1}^p + a_{\lambda^{p^{r-1}}}^{2(p-1)}\LL_{r-1}\big(\prod_{i=0}^{r-2} \CC_i\big)^{p-1}, \]
	where $\LL_j$ and $\CC_i$ are defined in Theorem \ref{ringstr2}. 
\end{thmc}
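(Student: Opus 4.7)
The plan is to bootstrap from the additive decompositions established in Theorem A (and its complex-conjugation analogue proved earlier in the paper for part (a)). In each case, the decomposition exhibits $H\uZ \wedge X_+$ as a wedge of suspensions of the form $H\uZ \wedge S^V$, which gives the structure of $H^\bigstar_G(X;\uZ)$ (or $H^\bigstar_G(X;\uZp)$) as a free module over $H^\bigstar_G(\pt)$ with one generator per allowed $V$. So the remaining content of Theorem C is purely multiplicative: pin down ring generators, identify the relations, and check that they are complete.

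For part (a), the $\C P^\infty_\tau$ decomposition yields a free $H^\bigstar_{C_2}(\pt;\uZ)$-module with one generator in each degree $k(1+\sigma)$, $k\geq 0$. I would take $\epsilon_{1+\sigma}$ to be the equivariant first Chern class of the tautological line bundle over $\C P^\infty_\tau$ and prove inductively that $\epsilon_{1+\sigma}^k$ is a module generator in degree $k(1+\sigma)$. The verification reduces to restricting along the forgetful map to non-equivariant cohomology, where $\epsilon_{1+\sigma}$ becomes the standard polynomial generator of $H^*(\C P^\infty;\Z)$, forcing $\epsilon_{1+\sigma}^k$ to detect the top cell in bidegree $k(1+\sigma)$.

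For parts (b) and (c) with $G = C_{p^m}$, Proposition \ref{prop:multi gene} supplies ring generators $\a_{\phi_{p^r}}$ (resp.\ $\beta_{2\phi_{p^r}}$) for $r=0,1,\dots,m$, and rank comparison with Theorem A forces exactly $m$ relations. The relation $\rho_r$ would be produced geometrically from the inclusion $P(V^{C_{p^{m-r+1}}}) \hookrightarrow P(V)$ of fixed-point loci inside $P(V)$ for $V=k\rho$: the Euler class of the transverse normal representation is a power of $u_{\lam^{p^{r-1}}-\lam^{p^r}}$, and the $p$-fold passage from the $C_{p^{r-1}}$-fixed stratum to the $C_{p^r}$-fixed stratum produces the leading $\TT_{r-1}^p$ term. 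Multiplying the resulting cohomological identity by $u_{\lam^{p^{r-1}}-\lam^{p^r}}$ clears the virtual dimension and yields a relation of the stated shape. For the quaternionic case, the same blueprint applies with $W_i$ replacing $\phi_i$ and degrees doubled, which is what produces $(u_{\lam^{p^{r-1}}-\lam^{p^r}})^2$ rather than a single power in $\mu_r$.

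The main obstacle is pinning down the explicit form of the correction term $a_{\lam^{p^{r-1}}}^{p-1}\TT_{r-1}(\prod_{i=0}^{r-2}\cA_i)^{p-1}$ (and its quaternionic analogue). Working modulo $p$ kills most binomial contributions coming from the expansion of $\TT_{r-1}^p$, but the surviving cross-terms still have to be expressed in the module basis prescribed by Theorem A, which is where the product $\prod \cA_i$ enters. I would handle this by induction on $r$: for $r=1$ the relation reduces to the $C_p$ calculation of \cite{Lewis} reinterpreted via our additive decomposition, and for the inductive step one restricts along $C_{p^{m-1}} \subset C_{p^m}$, where the inductive hypothesis already determines the lower-order relations and the fact that $u_{\lam^{p^{r-1}}-\lam^{p^r}}$ becomes invertible away from a specific fixed-point locus pins down what must be added at level $r$. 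Completeness of the relations would then follow from the coincidence of Hilbert series between the quotient ring and the module described by Theorem A.
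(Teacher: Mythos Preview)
Your treatment of part (a) is essentially the paper's argument: the paper defines $\epsilon_{1+\sigma}$ via the quotient map in the cofibre sequence rather than as a Chern class, but the verification that $\epsilon_{1+\sigma}^n$ generates in degree $n(1+\sigma)$ is exactly by restriction to the underlying non-equivariant cohomology, using that $\uH^{n+n\sigma}_{C_2}(\C P^\infty_\tau)\cong\uZ$.

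For parts (b) and (c), however, your route diverges from the paper's and has a real gap. The paper does not use the stratification by intermediate fixed-point loci $P(V^{C_{p^{m-r+1}}})$ or induction along the subgroup chain $C_{p^{m-1}}\subset C_{p^m}$. Instead it works entirely through the single restriction map $q_0^\ast\colon H^\bigstar_G(P(\mathcal U))\to H^\bigstar_G(\C P^\infty)=H^\bigstar_G(\pt)[x]$ to the \emph{fully} $G$-fixed copy of $\C P^\infty$. The value of $q_0^\ast(\alpha_{\phi_d})$ is computed explicitly (Propositions \ref{prop:q0 of alp} and \ref{prop:q0 of alp zp}) by iterated use of the formulas in Proposition \ref{prop:cp tau ome1}. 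The relation $\rho_r$ is then verified by a direct algebraic manipulation of these polynomial expressions in $x$ (Proposition \ref{r-reln}); the shape of the correction term comes from the factorization $\BB_r=\prod_{i=0}^{r-1}\A_i$ proved in \eqref{Bexp}, not from any normal-bundle or induction argument.

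The crucial technical point you are missing is that $q_0^\ast$ is \emph{not} injective with $\uZp$-coefficients, so relations cannot be detected there directly. The paper circumvents this by introducing an auxiliary polynomial algebra $\RR_d$ in which the relation \eqref{eq:au rel Z} between $a$- and $u$-classes is deliberately dropped; the formal lift $Q_0$ of $q_0^\ast$ to $\RR_d[x]$ is then injective even after tensoring with $\Z/p$, and a relation found there descends to $H^\bigstar_G(P(\mathcal U);\uZp)$. Your proposal has no analogue of this injectivity device. Moreover, restriction to $C_{p^{m-1}}$ kills precisely the top relation $\rho_m$ (since $\alpha_{\phi_{p^m}}$ restricts to something expressible in lower generators), so the inductive scheme you sketch cannot recover it; and the phrase ``$u_{\lambda^{p^{r-1}}-\lambda^{p^r}}$ becomes invertible away from a specific fixed-point locus'' does not translate into a statement that constrains the correction term in the graded ring.
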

	
	\begin{mysubsection}{Organization}
In \S \ref{eqic}, we recall results in equivariant homotopy theory that are useful from the viewpoint of ordinary cohomology. In \S \ref{eqicyc}, we recall previously known computations for $\uZ$-coefficients, and extend them as necessary for the following sections. We prove the homology decompositions for projective spaces in \S \ref{sec:additive}. These are applied to cohomology operations in \S \ref{sec cohop}, and the slice tower in \S \ref{slice}. The ring structures are described in \S \ref{cohring}.
	\end{mysubsection}
	
	\begin{notation}  We use the following notations throughout the paper.
		\begin{itemize} 
\item Throughout this paper, $G$ denotes the cyclic group of order $n$, and $g$ denotes a fixed generator of $G$.  The unit sphere of an orthogonal $G$-representation $V$  is denoted by $ S(V) $, the unit disk by $D(V)$, and $S^V$ the one-point compactification $\cong D(V)/S(V)$.  
\item  We write $ 1_\C $ for the trivial complex representation and $ 1 $ for the real trivial representation, and the regular representation $ \rho $ to mean $ \rho_\C $ (the complex regular representation) if not specified. 	The irreducible complex representations of $G$ are $1$-dimensional, and up to isomorphism are listed as $1_\C,\lambda, \lambda^2,\dots, \lambda^{n-1}$ where $\lambda$ sends $g$ to  $e^{2\pi i/n}$, the $n^{th}$ root of unity. 
\item In the case $n$ is even, we denote the sign representation by $\sigma$. This is a $1$-dimensional real representation.  
\item  The non-trivial real irreducible representations other than $\sigma$ are the underlying real representations of the complex irreducible representations. The realization of $\lambda^i$ is also denoted by the same notation. Note that $\lambda^i$ and $\lambda^{n-i}$ are conjugate and hence their realizations are isomorphic by the natural $\R$-linear map $z\mapsto \bar{z}$ which reverses orientation.
\item  Unless specified, the cohomology groups are taken  with $ \uZ $-coefficients and suppressed from the notation.
\item  The notation  $\mathcal{U}$ is used for the complete  $G$-universe.
			\item The notation $ p $ is used for a prime not necessarily odd. $ \mathcal{A}_p $ denotes the non-equivariant $\pmod p $ Steenrod algebra.
			\item The linear combinations $ \ell -(\Sigma_{d_i \mid n}~~b_i\mspace{2mu} \lambda^{d_i}) $  with $\ell  \in \Z$ and $ b_i\in \Z_\ge0 $  is denoted by $ \bigstar^e \subset RO(G) $. In the case $n$ is even,  we denote by $\bigstar_{\textup{div}}$ the gradings of the form $\ell -(\Sigma_{d_i \mid n}~~b_i\mspace{2mu} \lambda^{d_i}) - \epsilon \sigma$ where $\ell \in \Z$, $b_i \in \Z_{\geq 0}$ and $\epsilon \in \{0,1\}$. 
			\item $ \C P^\infty_\tau $ is the complex projective space with the $ C_2 $-action given by conjugation.
		\end{itemize}
		
	\end{notation}
	
	\begin{ack}
The first author would like to thank Surojit Ghosh for some helpful conversations in the context of computations of $\pi_\bigstar H\uZ$. The research of the second author was supported by the NBHM grant no. 16(21)/2020/11. The results of this paper are a part of the doctoral research work of the third author.
\end{ack}

	\section{Equivariant cohomology with integer coefficients}\label{eqic}
	We recall some features of the equivariant cohomology with coefficients in a Mackey functor, with a particular emphasis on $ \uZ $-coefficients, restricting our attention to cyclic groups $G  $.  For such 
	 $G$, a $G$-Mackey functor $\underline{M}$ consists of Abelian  groups $ \underline{M}(G/H) $ with $ G/H $-action, for every subgroup $H\leq G$, and they are related via the following maps.
	\begin{enumerate}
		\item The \textit{transfer} map $\tr^H_K\colon  \uM(G/K) \to \uM(G/H)$
		\item The \textit{restriction} map $\res^H_K \colon\uM(G/H) \to \uM(G/K)$
	\end{enumerate}
	 for $K\le H\le G$. The composite $ \res^H_L \tr^H_K $ satisfies a double coset formula (see \cite[\S 3]{HHR}). 
	 \begin{exam}
	 	The Burnside ring Mackey functor, $\uA$ is  defined by $\uA(G/H)=A(H)$. Here $ A(H) $ is the Burnside ring of $H$, i.e., the group completion of the monoid of finite $H$-sets up to isomorphism. The transfer maps are defined by inducing up the action : $S \mapsto H\times_K S$ for $K\leq H$, and the restriction maps are given by  restricting the action. For the $ K $-set $ K/K $, the double coset formula takes the form $  \res^H_L \tr^H_K(K/K)=\res_L^H(H/K)=\textup{union of double cosets $ L \,\backslash H / K $}. $
	 \end{exam}
	\begin{exam}
		The constant $G$-Mackey functors are defined as follows. For an Abelian group $ C $,  the constant $G$-Mackey functor $\underline{C}$ is defined as
		\[
		\underline{C}(G/H)=C, \res^H_K=\Id, \tr^H_K=[H:K].
		\]
		for $ K\le H\le G $. The double coset formula is given by  $  \res^H_L \tr^H_K(x)=[H:K]x$, for an element $x\in C  $. We may also define its dual Mackey functor  $\underline{C}^*$ by
		\[
		\underline{C}^*(G/H)=C, \res^H_K=[H:K], \tr^H_K=\Id.
		\]
%		\begin{exam} 
%		 Given a  $ G $-module $ \uM $, we may define the Mackey functor $ \uM $ given by
%			$ \uM(G/H) = M^H $. Restriction maps are  given by inclusion.  The transfer maps are defined by $ \tr_K^H(x) =\sum_{h\in H/K}hx$. If the action of $ G $ is trivial, then we get the constant Mackey functor. 
%		\end{exam}
	\begin{exam}
		For the group $ C_p $,  the  Mackey functor $ \langle  \Z/p  \rangle  $  is defined  by 
		\[
		\langle   \Z/p  \rangle  (C_p/C_p)= \Z/p,~~\langle   \Z/p  \rangle  (C_p/e)= 0,~~ \res^{C_p}_e=0,~~ \tr^{C_p}_e = 0.
		\]
	\end{exam}
	The following  Mackey functor will appear in \S \ref{sec cohop}.
		  \begin{exam}
		  	For the group $ C_2 $, we have the  Mackey functor $ \langle  \Lambda  \rangle  $  described by 
		\[
		\langle   \Lambda  \rangle  (C_2/e)= \Z/2,~~\langle   \Lambda  \rangle  (C_2/C_2)= 0,~~ \res^{C_2}_e=0,~~ \tr^{C_2}_e = 0.
		\]
		  \end{exam}
	\end{exam}
The equivariant stable homotopy  category is the homotopy  category of equivariant orthogonal spectra  \cite{MandellMay}. The Eilenberg-MacLane spectra are those whose integer graded homotopy  groups vanish except in degree 0. The following describes its relation with the Mackey functors. 
	\begin{thm}\cite[Theorem 5.3]{GreenleesMay}
		For every Mackey functor $\uM$, there is an Eilenberg-MacLane $G$-spectrum $H\uM$ which is unique up to isomorphism in the equivariant stable homotopy category. 
	\end{thm}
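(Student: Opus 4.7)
The strategy is the equivariant Postnikov-style construction: build $H\uM$ as a $G$-CW spectrum whose only nonzero Mackey-functor homotopy group is $\underline{\pi}_0 \cong \uM$, and then invoke obstruction theory for uniqueness.

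First I would recall that the category of $G$-Mackey functors has enough projectives; the representable functors $\uA_H := \mathrm{Hom}_{\mathrm{Burn}}(-,G/H)$ form a set of projective generators, and by the tom Dieck/Segal identification one has $\underline{\pi}_0(\Sigma^\infty_G {G/H}_+) \cong \uA_H$ with all higher integer-graded Mackey homotopy groups of suspension spectra being trivially computable in low degrees only up to connectivity. Pick a free presentation
\[
\bigoplus_{i\in I} \uA_{H_i} \xrightarrow{\;f\;} \bigoplus_{j\in J} \uA_{K_j} \longrightarrow \uM \longrightarrow 0,
\]
and realize $f$ by a map of spectra $\widetilde f \colon \bigvee_i \Sigma^\infty_G {G/H_i}_+ \to \bigvee_j \Sigma^\infty_G {G/K_j}_+$ (such a realization exists because these $\underline{\pi}_0$'s are exactly the source and target Mackey functors, and maps between projective Mackey functors are realized by maps of the representing suspension spectra). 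Let $X_0$ be the cofiber; by the long exact sequence of Mackey homotopy groups, $\underline{\pi}_0(X_0) \cong \uM$ and $X_0$ is $(-1)$-connected.

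Next I would inductively kill the higher homotopy. Assuming a $(-1)$-connected spectrum $X_n$ has been built with $\underline{\pi}_0(X_n) \cong \uM$ and $\underline{\pi}_k(X_n)=0$ for $0<k<n$, choose a projective presentation $\bigoplus \uA_{H} \twoheadrightarrow \underline{\pi}_n(X_n)$, realize it by a map $\bigvee \Sigma^n_G {G/H}_+ \to X_n$, and let $X_{n+1}$ be its cofiber. The long exact sequence shows $\underline{\pi}_n(X_{n+1})=0$ while $\underline{\pi}_k(X_{n+1})=\underline{\pi}_k(X_n)$ for $k\leq n-1$; higher homotopy may be newly created in degree $n+1$, but we repeat the process. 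Setting $H\uM := \mathrm{hocolim}_n X_n$ produces a $(-1)$-connected spectrum with $\underline{\pi}_0(H\uM)\cong \uM$ and $\underline{\pi}_k(H\uM)=0$ for $k\neq 0$.

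For uniqueness, given another such spectrum $Y$, I would construct a map $H\uM \to Y$ realizing the identity on $\underline{\pi}_0$ by obstruction theory on the cellular filtration used above. At each stage the obstruction to extending a map from the $n$-skeleton to the $(n{+}1)$-skeleton lies in an Ext-group $\mathrm{Ext}^{n+1}_{\mathrm{Mack}}(\uM,\underline{\pi}_n(Y))$, which vanishes by hypothesis since $\underline{\pi}_n(Y)=0$ for $n>0$; similarly the indeterminacy lies in groups that vanish, so the map is unique up to homotopy. A map inducing an isomorphism on all Mackey homotopy groups is a weak equivalence in the equivariant stable homotopy category, hence an isomorphism there.

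The main obstacle I expect is the bookkeeping around the inductive step: verifying that killing $\underline{\pi}_n$ does not damage $\underline{\pi}_0$ or lower groups (this requires the $(-1)$-connectivity and careful use of the cofiber long exact sequence of Mackey functors), and checking that the realization of a map between free Mackey functors by a map of wedges of orbit suspension spectra is available. Both are standard once one has the identification $[\Sigma^\infty_G {G/H}_+, E]^G_0 \cong \underline{\pi}_0(E)(G/H)$ coming from the Wirthmüller isomorphism, which reduces the problem to algebra inside the category of Mackey functors.
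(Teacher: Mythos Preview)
The paper does not give a proof of this theorem; it is stated as a cited result (Theorem 5.3 of Greenlees--May) and used as background. Your sketch is the standard Postnikov-style argument one finds in the literature for this result, and it is essentially correct: realize a projective presentation of $\uM$ by a map of wedges of orbit suspension spectra, take the cofiber, then inductively kill higher Mackey homotopy groups; uniqueness follows from the vanishing of the obstruction groups since the target has homotopy concentrated in degree zero. There is nothing to compare against in the present paper.
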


	In the equivariant stable homotopy category the objects $ S^{V} $ are invertible for a representation  $ V$. For a $G$-spectrum $X$, the equivariant homotopy groups have the structure of a Mackey functor $\upi^G_n(X)$, which on objects assigns the value
	\[ \upi^G_n(X)(G/H):= \pi_n(X^H).
	\]
	The grading may be extended to  $ \alpha\in  RO(G)$, the real representation ring of $ G $,    as
	\[
	\upi_\alpha^G(X)(G/K) \cong \mbox{ Ho-}G\mbox{-spectra } (S^\alpha \smas G/K_+ , X ),
	\]
	which is  isomorphic to $\pi_\alpha^{K}(X)$. Analogously, equivariant homology and cohomology theories are $RO(G)$-graded and Mackey functor valued, which on objects is defined by 
	\[
	\uH^\alpha_G(X;\uM)(G/K) \cong \mbox{ Ho-}G\mbox{-spectra } (X\smas G/K_+ , \Sigma^\alpha H\uM),
	\]
	\[
	\uH_\alpha^G(X;\uM)(G/K) \cong \mbox{ Ho-}G\mbox{-spectra } (S^\alpha \smas G/K_+ , X \smas H\uM).
	\]
	  For a constant  Mackey functor $ \underline{C} $,   the integer graded groups at orbit  $G/G$ compute the cohomology of  the orbit space of $X$ under the $G$-action, i.e., $H^{n}_{G}(X;\underline{C}) = H^{n}(X/G;C)$.

As $ H\uZ $ is a ring spectrum, the Mackey functor $ \uZ $ has a multiplicative structure, i.e, it is a commutative Green functor \cite[chapter XIII.5]{May-EquivBook}. As a consequence, $ \uH^\alpha_G(X;\uZ) $ are $ \uZ $-modules. These modules satisfy the following property.
\begin{prop}\cite[Theorem 4.3]{Yos83}\label{trres}
	For any $ \uZ $-module G-Mackey functor $ \uM $, $ \tr^H_K\res^H_K $ is multiplication by the  index $ [H:K] $ for $ K\le H\le G $.
\end{prop}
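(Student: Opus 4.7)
The plan is to deduce this from Frobenius reciprocity applied to the module structure over the Green functor $\uZ$. Recall that as stated earlier in the text, $H\uZ$ is a commutative ring spectrum, so $\uZ$ has the structure of a commutative Green functor with respect to the box product $\Box$, and a $\uZ$-module Mackey functor $\uM$ comes equipped with a pairing $\uZ\Box\uM\to\uM$ that is unital and associative. Concretely, for each subgroup $K\le G$, there is a unital bilinear action $\uZ(G/K)\otimes\uM(G/K)\to\uM(G/K)$, and these are compatible with restrictions, transfers, and conjugations through the standard Green-functor axioms.

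The key ingredient is the Frobenius reciprocity formula for a module over a Green functor: for $K\le H\le G$, $x\in\uZ(G/K)$, and $y\in\uM(G/H)$,
\[
\tr^H_K(x)\cdot y \;=\; \tr^H_K\bigl(x\cdot \res^H_K(y)\bigr).
\]
First I would verify (or quote) this identity; it follows in the equivariant stable category from the fact that the transfer $G/K_+\to G/H_+$ is a map of $H\uZ$-modules after smashing with $H\uZ$, and its pairing with the module structure of $\uM$ yields exactly the formula above. Equivalently, at the level of Mackey functors it is one of the defining axioms of a module over a Green functor (e.g.\ Lewis--May--McClure, or Dress).

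Now I specialize. Take $x=1\in\uZ(G/K)=\Z$, the unit of the ring $\uZ(G/K)$. Because $\uZ$ is the constant Mackey functor with $\res^H_K=\Id$ and $\tr^H_K=[H:K]$, we have
\[
\tr^H_K(1)= [H:K]\cdot 1 \in \uZ(G/H).
\]
Substituting into Frobenius reciprocity, and using that the unit acts trivially on $\uM$, gives for every $y\in\uM(G/H)$,
\[
[H:K]\cdot y \;=\; \tr^H_K(1)\cdot y \;=\; \tr^H_K\bigl(1\cdot\res^H_K(y)\bigr)\;=\;\tr^H_K\bigl(\res^H_K(y)\bigr),
\]
which is exactly the claim that $\tr^H_K\res^H_K$ is multiplication by the index $[H:K]$ on $\uM(G/H)$.

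The argument is essentially a direct calculation once Frobenius reciprocity is in place, so there is no serious obstacle; the only care is to state the module axioms precisely and to recognize that the special features of $\uZ$ (identity restrictions and index transfers) make $\tr^H_K(1)$ compute to $[H:K]$. An alternative, equivalent approach would be to note that $\tr^H_K\res^H_K$ coincides with multiplication by the class $[H/K]\in\uA(G/H)$ in the Burnside Mackey functor, together with the fact that the map $\uA\to\uZ$ sends $[H/K]$ to $|H/K|=[H:K]$; any $\uZ$-module factors through this map, yielding the same conclusion.
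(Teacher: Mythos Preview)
Your argument is correct: Frobenius reciprocity for a module over the Green functor $\uZ$, specialized to the unit $1\in\uZ(G/K)$, immediately yields $\tr^H_K\res^H_K(y)=\tr^H_K(1)\cdot y=[H:K]\,y$. The alternative you sketch via the Burnside ring map $\uA\to\uZ$ is also valid and amounts to the same computation.

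There is nothing to compare against in the paper itself: the proposition is stated with a citation to \cite[Theorem 4.3]{Yos83} and no proof is given. Your proof is the standard one and is essentially what underlies Yoshida's result in this special case, so you may regard it as filling in the omitted argument.
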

For an element $ \alpha\in RO(G) $ such that $ |\alpha^H| =0$ for all $ H\le G $, the representation sphere $ S^\alpha $ belongs to the Picard group of $ G $-spectra.   We note from \cite[Theorem B]{Ang21} that for such $\alpha$
\begin{myeq}\label{eq un6}
	\upi_0^G(H\uZ\smas S^\alpha)\cong  \uZ.
\end{myeq}
We recall a few important classes which generate a portion of the  ring $ \pi_\bigstar^G(H\uZ) $.
	\begin{defn}\cite[\S3]{HHR}
		For a $G$-representation $V$, consider the inclusion $S^0\hookrightarrow S^V$.  Composing with the unit map $S^0 \rightarrow H\uZ$, we obtain $S^0 \hookrightarrow S^V\ra S^V\smas H\uZ$ which represents an element in $\pi^{G}_{-V}(H\uZ)\cong \tH^{V}_{G}(S^0;\uZ)$. This class is denoted by $a_V$.
	\end{defn}
	
	\begin{defn}\cite[\S3]{HHR}
		For an oriented $G$-representation of dimension $n$,  $\underline{\pi}^{G}_{n-V}(H\uZ)= \uZ$ \cite[Example 3.10]{HHR}. Define $u_V\in \pi^{G}_{n-V}(H\uZ)\cong \tH^{G}_{n}(S^{V};\uZ)\cong \Z$ to be the generator that restricts to the choice of orientation in $\uH^{G}_{n}(S^V;\uZ)(G/e)\cong \tH_{n}(S^n; \Z)\cong \Z$.
	\end{defn}
	The following result simplifies calculations for $ \pi_\bigstar^G(H\uZ) $. 
	\begin{prop}\cite[Theorem 3.6]{BasuDeyKarmakar-NYJM},  \label{prop:smash-rep}	
		If $ \gcd(s,n)=1 $, then 
		\[
		H\uZ\smas S^{\lambda^i} \simeq H\uZ\smas S^{\lambda^{s i}}.
		\]
	\end{prop}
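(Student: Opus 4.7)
The plan is to identify $S^{\lambda^{si}} \simeq S^{\lambda^i} \smas S^{\lambda^{si}-\lambda^i}$ and reduce the proposition to showing $H\uZ \smas S^\alpha \simeq H\uZ$ for the virtual representation $\alpha := \lambda^{si}-\lambda^i$, via equation \eqref{eq un6}.

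The first step is to verify the dimension hypothesis $|\alpha^H|=0$ for every subgroup $H\le G$. Writing $H = \langle g^{n/d}\rangle$ for the divisor $d\mid n$ with $|H|=d$, the restriction $\lambda^i|_H$ sends the generator $g^{n/d}$ to $e^{2\pi i\cdot i/d}$, so its real fixed space has dimension $2$ if $d\mid i$ and dimension $0$ otherwise; identically for $\lambda^{si}|_H$ with $i$ replaced by $si$. Since $\gcd(s,n)=1$ forces $\gcd(s,d)=1$, multiplication by $s$ is a bijection on $\Z/d$, hence $d\mid i \iff d\mid si$. Thus $|(\lambda^i)^H|=|(\lambda^{si})^H|$ for every $H\le G$, giving $|\alpha^H|=0$.

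By \eqref{eq un6} we then have $\upi_0^G(H\uZ\smas S^\alpha)\cong\uZ$. Choosing a generator of $\pi_0^G(H\uZ\smas S^\alpha)\cong\Z$ produces a map $S^0 \to H\uZ\smas S^\alpha$, which by the $H\uZ$-module structure promotes to an $H\uZ$-module map
\[
\mu\colon H\uZ \longrightarrow H\uZ\smas S^\alpha.
\]
By construction $\mu$ is an isomorphism on $\upi_0^G$; combined with the fact that $S^\alpha$ lies in the Picard group of $G$-spectra (so $H\uZ \smas S^\alpha$ is an invertible $H\uZ$-module) and the full content of \cite[Theorem B]{Ang21}, which ensures that $H\uZ\smas S^\alpha$ is itself an Eilenberg--MacLane spectrum for $\uZ$, the map $\mu$ is an equivalence. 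Smashing $\mu$ with $S^{\lambda^i}$ then yields
\[
H\uZ \smas S^{\lambda^i} \;\simeq\; H\uZ \smas S^{\lambda^i} \smas S^\alpha \;\simeq\; H\uZ \smas S^{\lambda^{si}},
\]
which is the statement.

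The main obstacle is promoting the $\upi_0^G$-identification to a genuine equivalence of $G$-spectra; this is the real content of Angeltveit's theorem and goes beyond the single identity recorded in \eqref{eq un6}. Once that ingredient is in hand, the argument is simply a dimension check on fixed subspaces followed by smashing with $S^{\lambda^i}$.
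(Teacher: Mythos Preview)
The paper does not actually prove this proposition; it is stated with a citation to \cite[Theorem 3.6]{BasuDeyKarmakar-NYJM} and used as a black box thereafter. So there is no in-paper proof to compare against.

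Your argument is logically sound but essentially circular in its reliance on \cite{Ang21}. You correctly verify that $|\alpha^H|=0$ for all $H$, and \eqref{eq un6} gives $\upi_0^G(H\uZ\smas S^\alpha)\cong\uZ$. But as you yourself note, promoting this to an equivalence $H\uZ\smas S^\alpha\simeq H\uZ$ requires knowing that the remaining homotopy Mackey functors vanish, and for that you invoke ``the full content'' of Angeltveit's theorem. If that theorem already tells you $H\uZ\smas S^\alpha\simeq H\uZ$, then the map $\mu$ you build and the $\upi_0$ check are redundant; if it does not, you have not explained how to close the gap. Either way the argument amounts to citing Angeltveit for the result.

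A more direct route---and the one more in the spirit of the surrounding text (cf.\ the proof of Proposition~\ref{prop:un5}, which uses the map $z\mapsto z^s$)---is to work with the explicit $G$-map $\phi:S^{\lambda^i}\to S^{\lambda^{si}}$, $z\mapsto z^s$. Choosing $s'$ with $ss'\equiv 1\pmod n$ gives a map $\phi':S^{\lambda^{si}}\to S^{\lambda^i}$, $z\mapsto z^{s'}$, and the composites $\phi'\phi$ and $\phi\phi'$ have underlying degree $ss'\equiv 1\pmod n$ and are the identity on all nontrivial fixed-point spheres. One then argues (e.g.\ via the cell structure on the cofibers, which are built from free orbits) that after smashing with $H\uZ$ these composites become equivalences. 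This avoids the Picard-group machinery entirely and is presumably closer to the argument in the cited reference.
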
	
	The Proposition \ref{prop:smash-rep} implies  $\Sigma^{\lambda^i-\lambda^{s i}} H\uZ \simeq H\uZ$ if $ \gcd(s,n)=1 $. That is, in the graded commutative ring $\pi^G_\bigstar H\uZ$, there are invertible elements  $ u_{\lambda^{si}-\lambda^i}$ in degrees $\lambda^{i} - \lambda^{si}$ whenever $\gcd (s,n)=1$. Therefore to determine the ring $\pi^G_\bigstar H\uZ$  it is enough to consider the gradings which are linear combinations of $\lambda^d$ for $d\mid n$.\par
	 The linear combinations $ \ell -(\Sigma_{d_i\mid n} ~~b_i\mspace{2mu} \lambda^{d_i})  + \epsilon \sigma\in RO(G)$  with $\ell, b_i\in \Z $ and $\epsilon \in \{ 0, 1\}$  are denoted by $ \bigstar_{\textup{div}} $. The last term $\epsilon \sigma$ occurs only when $|G|$ is even. In the case of $ H\uZ $, the ring $ \pi^G_{\bigstar_{\textup{div}}}(H\uZ) $ is also obtained from $ \pi^G_{\bigstar}(H\uZ) $ by identifying all the
	 $  u_{\lambda^{si}-\lambda^i} $ with $ 1 $. More precisely,
	\[
	\pi^G_{\bigstar_{\textup{div}}}(H\uZ) \cong\pi^G_{\bigstar}(H\uZ)/(u_{\lambda^{si}-\lambda^i}-1) ,\quad \textup{~and~} \quad  \pi^G_{\bigstar}(H\uZ)\cong \pi^G_{\bigstar_{\textup{div}}}(H\uZ)[u_{\lambda^{si}-\lambda^i}\mid \gcd(s,n)=1].
	\]
	
	In fact, we make  a choice of $ u_{\lambda^{si}-\lambda^i} $ such that $ \res_e^G(u_{\lambda^{si}-\lambda^i})=1\in H^0(\pt) $. This implies that $ u_{\lambda^{si}-\lambda^i}\cdot u_{\lambda^i}=u_{\lambda^{si}} $. The following proposition describes the relation between $  a_{\lambda^{si}} $ and $ a_{\lambda^{i}}  $ in $ \pi^G_{\bigstar_{\textup{div}}}(H\uZ) $ in case $ G $ is of prime power order.
	\begin{prop}\label{prop:un5}
	Let $ G=C_{p^m} $.	 If $ \gcd(s,p)=1 $, then 
	\[
	 u_{\lambda^{i}-\lambda^{si}}a_{\lambda^{si}}= sa_{\lambda^i}.
	\]
	Thus, in the ring $ \pi^G_{\bigstar_{\textup{div}}}(H\uZ) $,
		 \[
		 a_{\lambda^{si}}= sa_{\lambda^i}.
		 \]
	\end{prop}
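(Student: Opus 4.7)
My plan is to exhibit an explicit $G$-equivariant comparison map between the representation spheres $S^{\lambda^i}$ and $S^{\lambda^{si}}$ that is compatible with the Euler classes, and then identify its $H\uZ$-smash with an integer multiple of the $u$-class by restricting to the trivial subgroup.

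First, for any integer $s$ the power map $\phi_s \colon \lambda^i \to \lambda^{si}$, $z \mapsto z^s$, is $G$-equivariant: since $g$ acts on $\lambda^i$ by $e^{2\pi i\, i/n}$ and on $\lambda^{si}$ by $e^{2\pi i\, si/n}$, one computes $\phi_s(gz) = (gz)^s = g\cdot \phi_s(z)$. One-point compactification gives a based $G$-map $\hat\phi_s \colon S^{\lambda^i} \to S^{\lambda^{si}}$ with $\hat\phi_s(0) = 0$ and $\hat\phi_s(\infty) = \infty$. Since $\hat\phi_s$ preserves the two basepoints entering the definition of $a_V$, we immediately obtain $(\hat\phi_s \wedge H\uZ)\circ a_{\lambda^i} = a_{\lambda^{si}}$ in $\pi^G_{-\lambda^{si}}(H\uZ)$. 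Interpreting the left-hand side through the $H\uZ$-module structure, this becomes the identity $\beta\cdot a_{\lambda^i} = a_{\lambda^{si}}$, where $\beta \in \pi^G_{\lambda^i - \lambda^{si}}(H\uZ)$ is the class represented by $\hat\phi_s$ post-composed with the unit $S^{\lambda^{si}} \to S^{\lambda^{si}}\wedge H\uZ$.

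Next, I identify $\beta$. When $\gcd(s,p) = 1$, the fixed-point dimensions of the virtual representation $\lambda^i - \lambda^{si}$ vanish on every subgroup $C_{p^k} \le G$, so by \eqref{eq un6} the group $\pi^G_{\lambda^i - \lambda^{si}}(H\uZ) \cong \Z$, generated by $u_{\lambda^{si}-\lambda^i}$ under the normalization $\res^G_e u_{\lambda^{si}-\lambda^i} = 1$. The underlying non-equivariant map of $\hat\phi_s$ is $z \mapsto z^s$ on $S^2$, which has degree $s$, so $\beta = s \cdot u_{\lambda^{si}-\lambda^i}$. Combining gives $s \cdot u_{\lambda^{si}-\lambda^i} \cdot a_{\lambda^i} = a_{\lambda^{si}}$; multiplying through by $u_{\lambda^i-\lambda^{si}}$ and using $u_{\lambda^i-\lambda^{si}}\cdot u_{\lambda^{si}-\lambda^i} = 1$ (an identity in $\pi^G_0(H\uZ)\cong \Z$, verified by restricting to $e$ where both factors become $1$) yields the first displayed equation. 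The second assertion then follows by passing to $\pi^G_{\bigstar_{\textup{div}}}(H\uZ)$, where the classes $u_{\lambda^j - \lambda^{sj}}$ are identified with $1$.

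The delicate step is pinning down $\beta = s\cdot u_{\lambda^{si}-\lambda^i}$. This hinges on \eqref{eq un6}, whose hypothesis forces $|\lambda^i - \lambda^{si}|^H = 0$ for every subgroup $H \le G$; when $G = C_{p^m}$ this is equivalent to $\gcd(s,p) = 1$, which is exactly where the prime-power assumption enters. The remaining ingredients — equivariance of the power map and the normalization $\res^G_e u_V = 1$ — are purely formal.
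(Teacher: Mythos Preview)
Your proof is correct and follows essentially the same approach as the paper: both use the equivariant power map $\hat\phi_s\colon S^{\lambda^i}\to S^{\lambda^{si}}$, the compatibility $\hat\phi_s\circ a_{\lambda^i}=a_{\lambda^{si}}$, and the identification of $\pi^G_{\lambda^i-\lambda^{si}}(H\uZ)$ with $\Z$ via \eqref{eq un6}. Your version is in fact slightly more direct: you observe that since $\upi^G_{\lambda^i-\lambda^{si}}(H\uZ)\cong\uZ$, the restriction $\res^G_e$ is an isomorphism, so the Hurewicz image $\beta$ of $\hat\phi_s$ is determined by its underlying degree and equals $s\cdot u_{\lambda^{si}-\lambda^i}$; the paper instead writes $u_{\lambda^{si}-\lambda^i}=s'\cdot\phi^{H\uZ}-t\cdot\tr^G_e(1)$ (with $ss'=1+tp^m$) and then uses Frobenius reciprocity to kill the transfer term upon multiplication by $a_{\lambda^i}$.
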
 
\begin{proof}
	By	Proposition \ref{prop:smash-rep} we have 
\begin{myeq}\label{un5}
	H\uZ\smas S^{\lambda^i} \simeq H\uZ\smas S^{\lambda^{s i}}.
\end{myeq}
	Hence there exists $ u_{\lambda^{si}-\lambda^i}\in \pi_0^G(H\uZ\smas S^{\lambda^{si}-\lambda^i}) $ such that the  composition
\[
\psi: H\uZ\smas S^{\lambda^i}\xrightarrow{id\smas u_{\lambda^{si}-\lambda^i}} H\uZ\smas S^{\lambda^i}\smas H\uZ\smas S^{\lambda^{si}-\lambda^i}\to H\uZ\smas S^{\lambda^{si}}
\]	
induces the equivalence \eqref{un5}. Since all the fixed point dimensions of $ \lambda^{si}-\lambda^i $ are zero,
\[
\upi_0^G(H\uZ\smas S^{\lambda^{si}-\lambda^i})\cong \uZ
\]
by \eqref{eq un6}. Choose $ u_{\lambda^{si}-\lambda^i}\in \pi_0^G(H\uZ\smas S^{\lambda^{si}-\lambda^i}) $ to be the element such that $ \res^G_e(u_{\lambda^{si}-\lambda^i})=1 $.
Multiplication by $u_{\lambda^{si}-\lambda^i}$
\[
\psi_*: \pi_\alpha^G(H\uZ\smas S^{\lambda^i}) \to \pi_\alpha^G(H\uZ\smas S^{\lambda^{si}}) 
\]
sends $ a_{\lambda^i} $ to $ u_{\lambda^{si}-\lambda^i}\cdot a_{\lambda^{i}} $. 
Consider the map
\[
\phi_{\lambda^{i}-\lambda^{si}}: S^{\lambda^i}\to S^{\lambda^{si}}.
\]
under which $ z \mapsto z^s$, that is, the underlying degree of $ \phi_{\lambda^{i}-\lambda^{si}} $ is $ s $. Since $  \gcd(s,p)=1  $, we may choose $ s' $ such that $ s\cdot s'=1+tp^m $. Then 
$$ u_{\lambda^{si}-\lambda^i}= s'\cdot \phi^{H\uZ}_{\lambda^{i}-\lambda^{si}}-t\cdot \tr_e^G(1)
$$
as $ \res^G_e(u_{\lambda^{si}-\lambda^i})=s\cdot s'-tp^m=1 $, where $ \phi^{H\uZ}_{\lambda^{i}-\lambda^{si}} $ is the Hurewicz image of $ \phi_{\lambda^{i}-\lambda^{si}} $.  This implies
 \[
 u_{\lambda^{si}-\lambda^i}\cdot a_{\lambda^i}=s'\cdot \phi_{\lambda^{i}-\lambda^{si}}\cdot a_{\lambda^i} -t\cdot \tr_e^G(1)\cdot a_{\lambda^i}= s'\cdot a_{\lambda^{si}}.
 \]
 Consequently $ a_{\lambda^{si}}=s\cdot a_{\lambda^i} $.
\end{proof}
In particular, we obtain the following again assuming $|G|=p^m$.
	\begin{prop}\label{prop:rel betw a lambda}
		Let $ d =p^k$ be a divisor of $ p^m $ and $ 1\le i<d $.	 Note that $ i-d $ and $ i $ have the same $ p $-adic valuation. Then 
		\[
		a_{\lambda^{i-d}}=\Theta_{i,d} \cdot a_{\lambda^i},
		\]
		where   $\Theta_{i,d}=\frac{i-d}{i}$ which is well defined in $ \Z/p^m $. 
	\end{prop}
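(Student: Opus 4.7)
The plan is to express both $a_{\lambda^{i-d}}$ and $a_{\lambda^i}$ as multiples of a common class $a_{\lambda^{p^\ell}}$ via Proposition \ref{prop:un5}, and then eliminate that class using the torsion of $a_{\lambda^i}$.

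First I would verify the $p$-adic valuation claim. Writing $\ell := v_p(i)$, so that $i = p^\ell u$ with $\gcd(u,p)=1$ and $\ell \le k-1$ (as $1 \le i < p^k$), I compute
$$i - d = p^\ell u - p^k = p^\ell\bigl(u - p^{k-\ell}\bigr) = p^\ell u',$$
and $u' := u - p^{k-\ell}$ is coprime to $p$ since $u$ is. Hence $v_p(i-d) = \ell$.

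Next, apply Proposition \ref{prop:un5} twice with base representation $\lambda^{p^\ell}$. Taking the scalar to be $u$ gives, in $\pi^G_{\bigstar_{\textup{div}}}(H\uZ)$, the identity $a_{\lambda^i} = u \cdot a_{\lambda^{p^\ell}}$; taking the scalar to be $u'$ gives $a_{\lambda^{i-d}} = u' \cdot a_{\lambda^{p^\ell}}$. Multiplying the first equation by $u'$ and the second by $u$ and subtracting eliminates $a_{\lambda^{p^\ell}}$ and yields
$$u \cdot a_{\lambda^{i-d}} = u' \cdot a_{\lambda^i}.$$

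The final step is to invert $u$. Since $\lambda^i$ has kernel $C_{p^\ell} \le G$, the map $S^0 \to S^{\lambda^i}$ becomes null after restriction to $C_{p^\ell}$, hence $\res^G_{C_{p^\ell}}(a_{\lambda^i}) = 0$. Proposition \ref{trres} then gives
$$p^{m-\ell}\cdot a_{\lambda^i} \;=\; \tr^G_{C_{p^\ell}}\res^G_{C_{p^\ell}}(a_{\lambda^i}) \;=\; 0,$$
and similarly $p^{m-\ell} \cdot a_{\lambda^{i-d}} = 0$. Because $\gcd(u,p)=1$, $u$ has an inverse $\bar u$ modulo $p^{m-\ell}$, and multiplying $u \cdot a_{\lambda^{i-d}} = u' \cdot a_{\lambda^i}$ by $\bar u$ yields $a_{\lambda^{i-d}} = \bar u u'\cdot a_{\lambda^i}$. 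Since $\bar u u' \equiv u'/u = (i-d)/i = \Theta_{i,d} \pmod{p^{m-\ell}}$, and both sides are annihilated by $p^{m-\ell}$, the coefficient is unambiguously $\Theta_{i,d} \in \Z/p^m$.

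The only subtlety, and thus the mild obstacle, is the bookkeeping between the two moduli: $\Theta_{i,d}$ is stated as an element of $\Z/p^m$, while the natural annihilator of $a_{\lambda^i}$ is only $p^{m-\ell}$. The resolution is transparent: $\Theta_{i,d}$ acts on $a_{\lambda^i}$ through its image in $\Z/p^{m-\ell}$, and under this reduction it agrees with $u'/u$, which is precisely what the calculation produces.
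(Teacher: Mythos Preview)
Your proof is correct and follows exactly the route the paper intends: the paper states this proposition without proof, preceded by ``In particular, we obtain the following'', signalling it as an immediate consequence of Proposition~\ref{prop:un5}. Your derivation via $a_{\lambda^{p^\ell}}$ is a faithful unpacking of that. A slightly shorter phrasing is available: since $\Theta_{i,d}\cdot i \equiv i-d \pmod{p^m}$ and $\gcd(\Theta_{i,d},p)=1$, one may apply Proposition~\ref{prop:un5} directly with $s=\Theta_{i,d}$ to get $a_{\lambda^{i-d}}=a_{\lambda^{\Theta_{i,d}\cdot i}}=\Theta_{i,d}\,a_{\lambda^i}$ in one step, but your version with the intermediate class is equally valid and makes the torsion bookkeeping explicit.
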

	The following computations of $\upi_\bigstar^{C_p}(H\Zp)$ \cite[Appendix B]{Ferland-thesis}	 will help us in the following sections. For an odd prime  $ p $ we have
	\begin{myeq}\label{cp-comp zp cof}
		\upi_{\alpha}^{C_p}(H \Zp) = \begin{cases} \Zp & \text{if} \; |\alpha| =0, \; |\alpha^{C_p}| \geq 0 \\
			\Zp^* & \text{if} \; |\alpha| =0, \; |\alpha^{C_p}| < 0 \\
			\langle \Z/p \rangle & \text{if}\;  |\alpha| <0, \; |\alpha^{C_p}| \geq 0, \; \text{and} \; |\alpha| \; \text{even} \\ 
			\langle \Z/p \rangle & \text{if} \;|\alpha|>0, \; |\alpha^{C_p}| <-1,\; \text{and} \; |\alpha| \; \text{odd} \\ 
			0 & \text{otherwise,} \end{cases}
	\end{myeq}
and	for the group $ C_2 $, 
	\begin{myeq}\label{cp-comp z2 cof}
		\upi_{\alpha}^{C_2}(H \Zt) = \begin{cases} \Zt & \text{if} \; |\alpha| =0, \; |\alpha^{C_2}| \geq 0 \\
			\Zt^* & \text{if} \; |\alpha| =0, \; |\alpha^{C_2}| < 0 \\
			\langle  \Lambda  \rangle & \text{if} \;|\alpha|=0, \; |\alpha^{C_2}| =-1, \\ 
			\langle \Z/2 \rangle & \text{if}\;  |\alpha| <0, \; |\alpha^{C_2}| \geq 0, \\ 
			\langle \Z/2 \rangle & \text{if} \;|\alpha|>0, \; |\alpha^{C_2}| <-1, \\ 
			0 & \text{otherwise.} \end{cases}
	\end{myeq}
	
	\begin{mysubsection}{Anderson duality}
Let $I_{\Q}$ and $I_{\Q/\Z}$ be the spectra representing the cohomology theories given by $X \mapsto \Hom(\pi_{-\ast}^G(X), \Q)$ and $X \mapsto \Hom(\pi_{-\ast}^G(X), \Q/\Z)$ respectively. The natural map $\Q \to \Q/\Z$ induces the spectrum map $I_{\Q} \to I_{\Q/\Z}$, and the homotopy fibre is denoted by $I_{\Z}$. For a $G$-spectrum $X$, the \emph{Anderson dual} $I_{\Z}X$ of $X$, is the function spectrum $F(X, I_{\Z})$. For $X=H\uZ$, one easily computes $I_{\Z}H\uZ \simeq H\uZ^\ast\simeq \Sigma^{2-\lambda}H\uZ$ \cite{BG20,Zeng} in the case $G$ is a cyclic group.

In general, for $G$-spectra $E$, $X$, and $\alpha \in RO(G),$ there is short exact sequence  
\begin{myeq}\label{end_dual}
0 \to \Ext_L(\uE_{\alpha -1}(X), \Z) \to \underline{I_{\Z}(E)}^{\alpha}(X) \to \Hom_L(\uE_{\alpha}(X), \Z)\to 0.
\end{myeq}
 In \eqref{end_dual}, $\Ext_L$ and $\Hom_L$ refer to level-wise $\Ext$ and $\Hom$, which turn out to be Mackey functors. In particular, for $E=H\uZ$ and $X=S^0,$ we have the equivalence $\uE_\alpha(X) \cong \upi^G_\alpha (S^0; \uZ)$. Therefore, one may rewrite \eqref{end_dual} as
 \begin{myeq}\label{and_comp}
 0 \to  \Ext_L(\upi_{\alpha+\lambda -3}^{G}(H\uZ), \Z) \to \upi_{-\alpha}^{G}(H\uZ) \to \Hom_L(\upi_{\alpha+\lambda -2}^{G}(H\uZ), \Z) \to 0
 \end{myeq}
 for each $\alpha \in RO(G).$ 
\end{mysubsection}

	\section{$ \pi^G_{\bigstar}(H\uZ) $ for cyclic  groups}\label{eqicyc}
	This section describes various structural results of $ \pi^G_{\bigstar}(H\uZ) $  which helps us to construct the homology decompositions in the later sections.  With Burnside ring coefficients, Lewis \cite{Lewis} first described $ \pi^{C_p}_\bigstar(H\uA) $. The portion of the $  RO(C_{p^n} )$-graded homotopy of $H\uZ$ in dimensions of the form $ k - V $ was described in \cite{HHR-realKthy,HHR-slicess}. Using the Tate square,  $ \pi^{C_2}_\bigstar(H\uZ) $ was determined in \cite{Greenlees} and $ \pi^{C_p}_\bigstar(H\Zp) $ in \cite{BasuGhosh-cp}. For groups of square free order $ \pi^G_\bigstar(H\uZ) $ was explored in \cite{BG20}, and for the group $ C_{p^2} $, $ \pi^G_\bigstar(H\uZ) $ appeared in \cite{Zeng}. 

We use the notation $\pi_{\bigstar^e}^G(H\uZ)$ to denote the part of $\pi_{\bigstar_{\textup{div}}}^G(H\uZ)$ in gradings of the form $k-V$ where $V$ does not contain the sign representation $\sigma$. That is, $\bigstar^e \subset RO(G)$ consists of $l-\sum_{d \mid n} b_d \lambda^d$ with $b_d\geq 0$. 
	\begin{thm}[\cite{BasuDey}] \label{Zhtpy}
		The subalgebra  $ \pi_{\bigstar^e}^G(H\uZ) $ of $ \pi_\bigstar^G(H\uZ) $ is generated over $ \Z $ by the classes $ a_{\lambda^d}, u_{\lambda^d} $ where $ d $ is a divisor of $ n $, $d\ne n$ with relations   
		\begin{myeq}\label{eq:a rel Z}
			\frac{n}{d}a_{\lambda^d} =0,
		\end{myeq}
		\begin{myeq}\label{eq:au rel Z}
			\frac{d}{\gcd(d,s)}a_{\lambda^s} u_{\lambda^d}=\frac{s}{\gcd(d,s)} a_{\lambda^d}u_{\lambda^s}.
		\end{myeq}
	\end{thm}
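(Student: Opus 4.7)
The plan is to obtain Theorem \ref{Zhtpy} by combining a cell-by-cell computation of the relevant homotopy groups with a direct identification of the two stated relations, then closing the argument by counting.

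First, I would reduce the problem to the divisor gradings using Proposition \ref{prop:smash-rep}: since $u_{\lambda^{si}-\lambda^i}$ is invertible whenever $\gcd(s,n)=1$, after passing to the subalgebra $\pi_{\bigstar^e}^G(H\uZ)$ (where the grading is a nonnegative combination of $\lambda^d$ for $d\mid n$), the only classes that need to appear as generators are those associated to divisors $d\mid n$. The definitions of $a_V$ and $u_V$ place $a_{\lambda^d}$ in degree $-\lambda^d$ and $u_{\lambda^d}$ in degree $2-\lambda^d$, both belonging to $\bigstar^e$. The case $d=n$ is excluded because $\lambda^n = 1_\C$ is trivial, so $a_{\lambda^n}=0$ and $u_{\lambda^n}=1$.

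Next, I would compute the additive structure inductively on the length $\sum b_d$ of a grading $\alpha=\ell - \sum b_d\lambda^d\in\bigstar^e$. The workhorse is the cofiber sequence $S(\lambda^d)_+ \to S^0 \to S^{\lambda^d}$. Because $S(\lambda^d)$ has isotropy $C_d$ and $S(\lambda^d)/G\simeq S^1$, one reads off $H^0_G(S(\lambda^d)_+;\uZ)\cong H^1_G(S(\lambda^d)_+;\uZ)\cong \Z$. The long exact sequence with $\alpha$ trivial then identifies $\pi^G_{-\lambda^d}(H\uZ)\cong \Z/(n/d)$ generated by $a_{\lambda^d}$, which is exactly the first relation $\frac{n}{d}a_{\lambda^d}=0$. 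Smashing the cofiber sequence with $S^{\alpha}$ and iterating propagates these classes as multiplicative generators across $\bigstar^e$. For the multiplicative relation, set $m=\gcd(d,s)$ and use the equivariant maps $\phi_{d/m}\colon \lambda^m\to\lambda^d$ and $\phi_{s/m}\colon \lambda^m\to\lambda^s$ given by $z\mapsto z^{d/m}$ and $z\mapsto z^{s/m}$. Each has underlying degree equal to its exponent, so pushing the orientation class $u_{\lambda^m}$ forward yields $\frac{d}{m}u_{\lambda^d}$ and $\frac{s}{m}u_{\lambda^s}$ respectively after multiplication by $u$-classes identifying the source. Pairing these with $a_{\lambda^s}$ and $a_{\lambda^d}$ (which restrict to $a_{\lambda^m}$ up to these same equivariant maps) produces the identity $\frac{d}{m}a_{\lambda^s}u_{\lambda^d}=\frac{s}{m}a_{\lambda^d}u_{\lambda^s}$ after traversing a single square of cofiber sequences.

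The main obstacle I expect is verifying completeness of the presentation, i.e.\ that no further relations are hidden, especially when $n$ is not a prime power so that the divisor lattice of $n$ becomes nontrivial. I would handle this by showing that the abstract ring built from the presentation surjects onto $\pi^G_{\bigstar^e}(H\uZ)$ via the inductive construction above, and then match the torsion orders in each grading using the short exact sequence \eqref{and_comp} coming from Anderson duality. This should pin down the order of every $\pi^G_\alpha(H\uZ)$ exactly, confirming that the surjection is an isomorphism. As a consistency check, the known cases $G=C_p$ from \cite{Lewis, BasuGhosh-cp}, $G=C_{p^2}$ from \cite{Zeng}, and squarefree $n$ from \cite{BG20} should fall out as specializations, providing independent verification for small orders.
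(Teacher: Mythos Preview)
The paper does not prove this theorem; it is quoted verbatim from the external reference \cite{BasuDey} and used as input for the rest of the paper. There is therefore no proof in the present paper to compare your proposal against.

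That said, your outline is broadly in the right spirit for how such a result is established: the reduction to divisor gradings via Proposition~\ref{prop:smash-rep} is exactly the mechanism the paper exploits, and the cofiber sequence $S(\lambda^d)_+\to S^0\to S^{\lambda^d}$ is the standard tool for computing $\pi^G_{-\lambda^d}(H\uZ)\cong \Z/(n/d)$. Your derivation of relation~\eqref{eq:au rel Z} via the degree-$d/m$ and degree-$s/m$ maps between representation spheres is also the natural approach, and is essentially what underlies Proposition~\ref{prop:new u cls rel with a cls} later in the paper. The one place where your sketch is genuinely thin is the completeness step: showing that no further relations arise requires knowing that each $\pi^G_\alpha(H\uZ)$ for $\alpha\in\bigstar^e$ is cyclic of a specific order, and your appeal to Anderson duality \eqref{and_comp} alone does not obviously deliver this---one typically needs an inductive argument over the number of $\lambda^d$'s in $\alpha$, tracking both the group and its generator through the long exact sequences, and the bookkeeping when several distinct divisors interact is where the work lies. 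The paper's own remark after Theorem~\ref{Zhtpy} (that $\pi^G_\alpha(H\uZ)$ is cyclic with order the lcm of the orders of the monomials in the $a$- and $u$-classes) is precisely the output of that argument, but it is asserted here rather than proved.
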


For a general cyclic group $G$ and $\alpha \in \bigstar^e$, we observe that the expression above implies that $\pi_\alpha^G(H\uZ)$ is cyclic. This is generated by a product of the corresponding $u$-classes and $a$-classes, and the relation \eqref{eq:au rel Z} implies that they assemble together into a cyclic group. The order of this is the least common multiple of the orders of a product of $a$-classes and $u$-classes occurring in $\pi_\alpha^G(H\uZ)$. 

In $ \Zp $-coefficients the relation  \eqref{eq:au rel Z} simplified to the following 
	\begin{prop} In $ \Zp $-coefficients, we have the following relation
		\begin{myeq}\label{eq:au rel Zp}
			a_{\lambda^{pd}} u_{\lambda^d}=p a_{\lambda^d}u_{\lambda^{pd}}=0.
		\end{myeq}
	\end{prop}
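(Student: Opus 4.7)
The plan is to derive the proposition directly from Theorem \ref{Zhtpy} by (i) specializing the general $u$-$a$ relation to $s=pd$, and (ii) invoking that everything with $\Zp$-coefficients is $p$-torsion.

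First I would substitute $s=pd$ into the relation
\[
\frac{d}{\gcd(d,s)}a_{\lambda^s}u_{\lambda^d} = \frac{s}{\gcd(d,s)}a_{\lambda^d}u_{\lambda^s}.
\]
Since $\gcd(d,pd)=d$, the coefficients collapse to $1$ and $p$ respectively, yielding the identity $a_{\lambda^{pd}}u_{\lambda^d} = p\, a_{\lambda^d}u_{\lambda^{pd}}$ in $\pi^G_{\bigstar}(H\uZ)$. To carry this across to $\Zp$-coefficients, I would use that the quotient of Mackey functors $\uZ\twoheadrightarrow\Zp$ induces a ring map $H\uZ\to H\Zp$, and that the classes $a_{\lambda^k}$ and $u_{\lambda^k}$ are defined by the universal constructions $S^0\hookrightarrow S^{\lambda^k}$ and by choice of generator of $\upi^G_{2-\lambda^k}$, which are manifestly compatible with change of coefficients. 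Thus the identity holds verbatim in $\pi^G_{\bigstar}(H\Zp)$.

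For the second equality, the ring $\pi^G_{\bigstar}(H\Zp)$ is a graded module over $\pi^G_0(H\Zp)=\Zp(G/G)=\Z/p$; in particular, multiplication by $p$ annihilates every homotopy group, so $p\,a_{\lambda^d}u_{\lambda^{pd}}=0$. Combining with the first step gives $a_{\lambda^{pd}}u_{\lambda^d} = p\,a_{\lambda^d}u_{\lambda^{pd}}=0$, as required.

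The proof is essentially immediate once Theorem \ref{Zhtpy} is available; there is no real obstacle, only the bookkeeping point that the relation of Theorem \ref{Zhtpy} (stated with $\uZ$-coefficients) descends along $H\uZ\to H\Zp$. One could alternatively re-run the arguments used to establish \eqref{eq:au rel Z} directly at the level of $H\Zp$, but the change-of-coefficients route is the most efficient.
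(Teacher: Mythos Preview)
Your proof is correct and matches the paper's approach: the paper states this proposition without proof, introducing it simply as the specialization of relation \eqref{eq:au rel Z} to $\Zp$-coefficients, which is precisely what you do. Your explicit verification that $s=pd$ collapses the coefficients to $1$ and $p$, together with the observation that $\pi^G_\bigstar(H\Zp)$ is $p$-torsion, fills in the routine details the paper omits.
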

	
	Note that the space $S(\lambda^{p^r})$ fits into a cofibre sequence
	\[ {G/C_{p^r}}_+ \stackrel{1-g}{\to} {G/C_{p^r}}_+ \to S(\lambda^{p^r})_+.\]
	It follows that 
	\begin{myeq}\label{repsph}
	\upi_\alpha^{C_{p^r}} (H\uZ) =0 \mbox{ and } \upi_{\alpha-1}^{C_{p^r}}(H\uZ) = 0 ~ \implies \upi_{\alpha}^G (S(\lambda^{p^r}))=0 .
	\end{myeq} 
	In the case $r=0$, we may make a complete computation to obtain \cite{BG20}
	\begin{myeq}\label{slamb}
	\upi_\alpha(S(\lambda)_+\wedge H\uZ) \cong \begin{cases} \uZ^\ast & \mbox{if } |\alpha|=0,\\
	                 \uZ & \mbox{if } |\alpha|=1 ,\\
	                 0 & \mbox{otherwise}. 
	                 \end{cases}
	 \end{myeq}
	Suppose $\alpha$ satisfies $|\alpha^H|>0$ for all subgroups $H$. This means that $S^\alpha$ has a cell structure with cells of the type $G/H\times D^n$ for $n\geq |\alpha^H|$. Therefore,
	\begin{myeq} \label{grzero}
	 |\alpha^H|>0 \mbox{ for all subgroups } H \implies \upi_\alpha^G(H\uZ)=0.
	 \end{myeq}

	Now if $\alpha$ satisfies $|\alpha^H|<0$ for all subgroups $H$, $\beta = -\alpha$ satisfies the above condition. As $S^\alpha$ is the Spanier-Whitehead dual of $S^\beta$, we may construct it using cells of the type $G/H\times D^n$ for $n<0$. Again, $\upi_\alpha^G(H\uZ)=0$.  Thus,
	\begin{myeq} \label{lezero}
	 |\alpha^H|<0 \mbox{ for all subgroups } H \implies \upi_\alpha^G(H\uZ)=0.
	 \end{myeq} 
	
Using Anderson duality, we extend these relations slightly in the following Proposition.
\begin{prop}\label{mfun0}
		Let $ \alpha\in RO(G) $ be such that $ \alpha $ is even, $ |\alpha |>0 $, and $ |\alpha^K|\ge 0 $  for all subgroups $ K\ne e $. Then  $ \upi_\alpha^G (H\uZ)=0$.
	\end{prop}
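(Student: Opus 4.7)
The plan is to apply Anderson duality through the short exact sequence \eqref{and_comp}. Substituting $-\alpha$ for $\alpha$ in that sequence produces
\[ 0 \to \Ext_L(\upi_{\lambda-\alpha-3}^G(H\uZ), \Z) \to \upi_\alpha^G(H\uZ) \to \Hom_L(\upi_{\lambda-\alpha-2}^G(H\uZ), \Z) \to 0, \]
so it suffices to show that both $\upi_{\lambda-\alpha-2}^G(H\uZ)=0$ and $\upi_{\lambda-\alpha-3}^G(H\uZ)=0$. Each of these will follow from \eqref{lezero} once we verify that $|(\lambda-\alpha-k)^H|<0$ for every subgroup $H\leq G$ and every $k\in\{2,3\}$.

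To carry out the dimension check, the key observation is that $\lambda$ is a faithful $G$-representation for $G$ cyclic, so $|\lambda^K|=0$ for every subgroup $K\neq e$, while $|\lambda^e|=2$. Combined with the hypothesis $|\alpha|>0$ (controlling the case $H=e$) and $|\alpha^K|\geq 0$ for $K\neq e$, one obtains at $H=e$ the values $-|\alpha|$ and $-|\alpha|-1$ (both negative since $|\alpha|>0$), and at any nontrivial subgroup $K$ the values $-|\alpha^K|-2\leq -2$ and $-|\alpha^K|-3\leq -3$. All of these are strictly negative, so \eqref{lezero} applies in both cases and the exactness of the sequence finishes the argument.

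There is essentially no obstacle to executing this plan: the proposition is a direct consequence of Anderson duality combined with the elementary vanishing criterion \eqref{lezero}, with $\lambda$'s faithfulness providing the single input that makes the arithmetic work. I note in passing that the hypothesis that $\alpha$ is even does not enter this argument at all; it is presumably included in the statement for compatibility with the contexts in which the proposition is later invoked.
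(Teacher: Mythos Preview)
Your proof is correct and follows the same route as the paper's: apply the Anderson duality sequence \eqref{and_comp} and annihilate both outer terms. The only cosmetic difference is that for the $\Hom_L$ term the paper observes merely that $\upi_{\lambda-\alpha-2}^G(H\uZ)$ is torsion (its value at $G/e$ lies in negative degree), which already kills $\Hom_L(-,\Z)$, whereas you check directly that all fixed-point dimensions of $\lambda-\alpha-2$ are negative so the Mackey functor vanishes outright; your remark that the evenness hypothesis is unused also matches the paper, whose proof does not invoke it either.
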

	
	\begin{proof}
		By \eqref{and_comp}
		\begin{equation*}
		0 \to  \Ext_L(\upi_{-\alpha+\lambda -3}^{G}(H\uZ), \Z) \to \upi_{\alpha}^{G}(H\uZ) \to \Hom_L(\upi_{-\alpha+\lambda -2}^{G}(H\uZ), \Z) \to 0.
		\end{equation*}
		Since $ |\alpha |>0 $, we have $ |-\alpha+\lambda -2|<0 $. Hence the right hand side term is zero as the Mackey functor  $ \upi_{-\alpha+\lambda -2}^{G}(H\uZ) $ only features torsion elements. Also, $ |(-\alpha+\lambda -3)^H|<0 $ for all $ H\le G $. Hence $ \upi_{\alpha}^{G}(H\uZ)=0 $.
	\end{proof}	

	The following theorem may be viewed as another extension of \eqref{grzero}. It also provides the necessary input in proving homology decompositions. 
	\begin{thm}\label{thm:BG Cn}
		Let $ \alpha\in RO(G) $ be such that $ |\alpha^H| $ is odd for all subgroups $H$, and $|\alpha^H|>-1$ implies $|\alpha^K|\ge -1 $ for all subgroups $ K\supseteq H $. Then $ \upi_{\alpha}^{G}(H\uZ )=0$.
	\end{thm}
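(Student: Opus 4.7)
The plan is to proceed by induction on $|G|=n$. The base case $n=1$ is immediate: any odd integer $\alpha$ is nonzero and $\pi_\alpha(H\Z)=0$.

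For the inductive step on $G=C_n$, I would first dispose of the two uniform-sign extremes: if $|\alpha^H|\ge 1$ for every subgroup $H$, then \eqref{grzero} gives the conclusion, and if $|\alpha^H|\le -1$ for every $H$, then \eqref{lezero} does. So we may assume the fixed-point dimensions of $\alpha$ take both signs across the subgroup lattice. The primary tool in this mixed case is the cofibre sequence $S(\lambda)_+\to S^0 \to S^\lambda$ (with $\lambda$ the faithful one-dimensional complex representation), which, smashed with $H\uZ$ and evaluated on $\upi_\bullet$, yields the four-term exact sequence
\[
\upi^G_{\alpha+1-\lambda}(H\uZ)\to \upi^G_\alpha(S(\lambda)_+\wedge H\uZ) \to \upi^G_\alpha(H\uZ) \to \upi^G_{\alpha-\lambda}(H\uZ) \to \upi^G_{\alpha-1}(S(\lambda)_+\wedge H\uZ).
\]
Because \eqref{slamb} shows $\upi^G_\beta(S(\lambda)_+\wedge H\uZ)$ vanishes unless $|\beta|\in\{0,1\}$, and since $|\alpha|$ is odd, the two flanking $\upi(S(\lambda)_+)$ terms both vanish as soon as $|\alpha|\ne 1$. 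In that generic range the sequence degenerates to an isomorphism $\upi^G_\alpha(H\uZ)\cong \upi^G_{\alpha-\lambda}(H\uZ)$, and one checks directly that $\alpha-\lambda$ again satisfies the hypothesis (all fixed-point dimensions remain odd, and the upward condition survives because subtracting $\lambda$ only decreases $|\alpha^e|$ by $2$, leaving the $K\ne e$ dimensions unchanged).

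Iterating $\upi^G_\alpha\cong \upi^G_{\alpha-\lambda}$ reduces the underlying dimension $|\alpha|$ in steps of $2$, eventually landing on $|\alpha|=\pm 1$ (depending on the initial sign). At that stage I would invoke the analogous cofibre sequences $S(\lambda^{p})_+\to S^0\to S^{\lambda^{p}}$ for primes $p\mid n$, which perform a reduction on $|\alpha^H|$ for subgroups $H\le C_{p}$ rather than on $|\alpha^e|$ alone; the key inputs are the splitting $\upi^G_\alpha(S(\lambda^{p})_+\wedge H\uZ)\cong \upi^{C_{p}}_\alpha(H\uZ)\oplus \upi^{C_{p}}_{\alpha-1}(H\uZ)$ (obtained by combining the cofibration $(G/C_{n/p})_+\xrightarrow{1-g}(G/C_{n/p})_+\to S(\lambda^{p})_+$ of \eqref{repsph} with the adjunction at the subgroup level) and the inductive hypothesis applied to the proper subgroup $C_{p}$, which handles the first summand since $\alpha|_{C_p}$ manifestly inherits both conditions. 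Combining these reductions across the primes dividing $n$, one should be able to drive the dimension profile of $\alpha$ into one of the uniform-sign regimes already handled.

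The main obstacle will be the appearance of the even-graded piece $\upi^{C_{p}}_{\alpha-1}(H\uZ)$ in the higher-prime reductions: since $\alpha-1$ has all even fixed-point dimensions, it falls outside the scope of the theorem, and indeed the naive even-graded analogue can genuinely fail at the resonant value (when $\alpha=1$ we have $\upi_0(H\uZ)=\uZ\ne 0$). To handle these even-graded contributions I would use Proposition \ref{mfun0} when the dimensions fall in its range, and Anderson duality \eqref{and_comp} otherwise, to convert the even-graded problem back into an odd-graded one amenable to the inductive machinery. The delicate bookkeeping is to maintain an augmented hypothesis (ruling out the resonant $|\alpha^H|=1$ values) that is preserved under both restriction to subgroups and under the shifts $\alpha\leadsto \alpha-\lambda^d$, and to verify that the multi-step reduction actually terminates in a uniform-sign case; this is the technical heart of the argument.
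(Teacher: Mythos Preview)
Your proposal has the right toolkit (cofibre sequences for $S(\lambda^d)$, the computation \eqref{slamb}, Anderson duality) but leaves the decisive step as an acknowledged gap, and misses a structural simplification that the paper exploits to close that gap cleanly.

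First, the organizational point you miss: by Proposition~\ref{trres}, for any $\uZ$-module Mackey functor, $\tr^G_H\res^G_H$ is multiplication by $[G:H]$; running this over all Sylow subgroups shows it is enough to prove the theorem for $G$ of prime power order. Once $G=C_{p^m}$, the subgroup lattice is a \emph{chain}, so the hypothesis becomes the statement that the sequence $(|\alpha^{C_{p^r}}|)_{r=0}^m$ is monotone in the weak sense that once a term exceeds $-1$ all later terms are $\geq -1$. This linear structure is what makes the reduction terminate; your induction on $|G|$ for composite $n$ forces you to juggle several primes at once, and the ``augmented hypothesis'' you allude to would have to survive all of those shifts simultaneously.

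Second, and more seriously: the paper does \emph{not} try to kill the even-graded piece $\upi^{C_p}_{\alpha-1}(H\uZ)$ arising from higher $S(\lambda^{p^s})$-cofibrations by a further inductive reduction. Instead it uses those cofibrations only in the regime where \eqref{repsph} already guarantees the flanking terms vanish (namely when \emph{all} fixed points for subgroups of $C_{p^s}$ are $\leq -1$, so that \eqref{lezero} kills both $\upi^{C_{p^s}}_\alpha$ and $\upi^{C_{p^s}}_{\alpha-1}$). This lets one adjust the low fixed-point dimensions up to exactly $-1$ without ever meeting your ``main obstacle,'' landing in the subfamily $\FF_G^{\geq -1}=\{\alpha:\ |\alpha^H|\geq -1\ \text{for all }H\}$. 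A short further reduction using $S(\lambda)$ then forces $|\alpha|\geq 1$. At that point the paper applies Anderson duality \emph{once}: since all fixed points of $\lambda-\alpha-3$ are $<0$, \eqref{lezero} gives $\upi^G_{\lambda-\alpha-3}(H\uZ)=0$, and the exact sequence \eqref{and_comp} yields $\upi^G_\alpha(H\uZ)\cong \Ext_L(\upi^G_{\lambda-\alpha-3}(H\uZ),\Z)=0$. There is no inductive bookkeeping at this stage. Your proposal gestures at Anderson duality but only as one of several tools to attack the even-graded term; the paper uses it as the terminal step after the problem has been normalized, which is why no resonant cases or augmented hypotheses are needed.

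A minor correction: the cofibre sequence for $S(\lambda^p)$ involves $(G/C_p)_+$, not $(G/C_{n/p})_+$, since the kernel of $\lambda^p$ on $C_n$ is the subgroup of order $p$. Also, the identification you write as a direct sum is really a short exact sequence (the $1-g$ map vanishes for $\uZ$-coefficients, but the extension need not split as Mackey functors); this does not affect vanishing arguments but you should not call it a splitting.
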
	
	Note that the second condition stated here may  be equivalently expressed as $|\alpha^H|<-1$ implies $|\alpha^K|\leq -1$ for all subgroups $K$ of $H$. The first condition implies that $\alpha$ does not contain any multiples of the sign representation in the case $|G|$ is even.
	
	\begin{proof}
	It suffices to prove this for $G$ of prime power order, via Proposition \ref{trres}. For groups of odd prime power order, this is proved in \cite{BasuDeyKarmakar-NYJM}.

	Let $\FF_G=\{\alpha\in RO(G) \mid \forall~ H \subseteq G,~ |\alpha^H|>-1 \implies	|\alpha^K|\geq -1 \mbox{ for all } H\subset K \}$. We would like to show that $\alpha\in \FF_G \implies \upi_\alpha(H\uZ)=0$. If $\alpha\in \FF_G$ with $|\alpha^G|<-1$, the hypothesis implies $|\alpha^H|\leq -1$ for all subgroups $H$ of $G$. For these $\alpha$, $\upi_\alpha^G(H\uZ)=0$ by \eqref{lezero} as all the fixed points are negative. 
	
	Now let $\alpha\in \FF_G$, and $H=C_{p^r}$ is a subgroup such that $|\alpha^H|<-1$. This implies that for all $K\subset H$, $|\alpha^K|\leq -1$. For such an $\alpha$, $\alpha-\lambda^{p^s} \in \FF_G$ for $s\leq r$. Also the cofibre sequence 
	\[ S(\lambda^{p^s})_+ \to S^0 \to S^{\lambda^{p^s}},\]
	implies the long exact sequence of Mackey functors 
	\begin{myeq}\label{longex}
	 \upi_{\alpha}^G(  S(\lambda^{p^s})_+\wedge H\uZ) \to \upi_\alpha^G ( H\uZ)\to \upi^G_{\alpha - \lambda^{p^s}}( H\uZ)\to \upi_{\alpha - 1}(  S(\lambda^{p^s})_+\wedge H\uZ).
	 \end{myeq}
	The given condition on $\alpha$ implies that  $\alpha-t$ for $t\in \{0,1,2\}$ have negative dimensional fixed points for subgroups of $C_{p^s}$. It follows from \eqref{repsph} and \eqref{lezero} that $\upi_\alpha^G(H\uZ)\cong \upi_{\alpha-\lambda^{p^s}}(H\uZ)$. In this way by adding and subtracting copies of $\lambda^{p^s}$ for $s\leq r$ while adhering to the condition $|\alpha^K|\leq -1$ for all subgroups $K$ of $H$, we may find a new $\beta \in \FF_G$, satisfying $|\beta^K|=-1$ for all $K=C_{p^s}$ for $s\leq r$, and $\upi_\beta^G(H\uZ)\cong \upi_\alpha^G(H\uZ)$. A consequence of this manoeuvre is that it suffices to prove the result for those $\alpha \in \FF_G$ such that $|\alpha^H|\geq -1$ for all subgroups $H$. Call this collection $\FF_G^{\geq -1}\subset \FF_G$.
	
	A small observation will now allow us to assume $|\alpha|\geq 1$ in $\FF^{\geq -1}_G$. For, we have the long exact sequence  
	\[ \upi_{\alpha }^G(  S(\lambda)_+\wedge H\uZ) \to \upi_\alpha^G ( H\uZ)\to \upi^G_{\alpha - \lambda}( H\uZ)\to \upi_{\alpha - 1}(  S(\lambda)_+\wedge H\uZ),\]
	by putting $s=0$ in \eqref{longex}. Applying the computation of \eqref{slamb}, we deduce $\upi_\alpha^G(H\uZ) \cong \upi_{\alpha-\lambda}^G(H\uZ)$ if $|\alpha|\geq 3$, and if $|\alpha|=1$, $\upi_\alpha^G(H\uZ)=0$ $\implies$ $\upi_{\alpha-\lambda}^G(H\uZ)=0$. The last conclusion is true because for $\nu= \alpha-1$, $|\nu|=0$, and the map 
\[ \uZ^\ast \cong \upi_\nu^G( S(\lambda)_+\wedge H\uZ)) \to \upi_\nu^G(H\uZ),\]
is an isomorphism at $G/e$, and hence injective at all levels. 
	
	Suppose that $\alpha \in \FF_G^{\geq -1}$ and $|\alpha|\geq 1$. By Proposition \ref{trres}, we have $\upi_\alpha^G(H\uZ)$ only features torsion elements as $\upi_\alpha^G(H\uZ)(G/e)=0$. Applying Anderson duality \eqref{and_comp}, we obtain 
	\[ \upi_\alpha^G(H\uZ) \cong \Ext_L(\upi_{\lambda-\alpha-3}^G(H\uZ),\Z).\]
Now note that all the fixed points of $\lambda - \alpha - 3$ are negative if $\alpha \in \FF_G^{\geq -1}$ and $|\alpha|\geq 1$. Therefore, $\upi_\alpha^G(H\uZ)=0$ and the proof is complete. 	
	\end{proof}
	
We also have a calculation of $\upi_\alpha^G(H\uZ)$ if all the fixed points are $\geq 0$. 	
	\begin{prop}\label{prop:new u mfun}
		Let $ \alpha\in RO(G) $ be such that $ |\alpha |=0 $, and $ |\alpha^K|\ge 0 $ even for all subgroups $ K\ne e $. Then  $ \upi_\alpha^G (H\uZ)$ is isomorphic to the Mackey functor $\uZ $.
	\end{prop}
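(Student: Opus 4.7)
The plan is to apply Anderson duality together with induction on $|G|$. Applying the Anderson short exact sequence \eqref{and_comp} with grading $\alpha$ yields
\[
0 \to \Ext_L\bigl(\upi_{-\alpha+\lambda-3}^G(H\uZ), \Z\bigr) \to \upi_\alpha^G(H\uZ) \to \Hom_L\bigl(\upi_{-\alpha+\lambda-2}^G(H\uZ), \Z\bigr) \to 0.
\]
Set $\gamma = -\alpha+\lambda-3$ and $\beta = -\alpha+\lambda-2$. A direct check gives $|\gamma^e| = -1$ and $|\gamma^K| \leq -3$ for $K \neq e$, so every fixed-point dimension of $\gamma$ is strictly negative, and \eqref{lezero} forces $\upi_\gamma^G(H\uZ) = 0$. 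The Ext term therefore vanishes, leaving $\upi_\alpha^G(H\uZ) \cong \Hom_L(\upi_\beta^G(H\uZ), \Z)$, where $\beta$ satisfies $|\beta| = 0$ and $|\beta^K| \leq -2$ (even) for $K \neq e$.

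The task becomes to identify $\upi_\beta^G(H\uZ) \cong \uZ^\ast$; granting this, levelwise duality (which exchanges the identity transfers of $\uZ^\ast$ with the identity restrictions of $\uZ$ and the index-multiplication restrictions of $\uZ^\ast$ with the index-multiplication transfers of $\uZ$) gives $\Hom_L(\uZ^\ast, \Z) \cong \uZ$, finishing the proof. I would establish the dual identification together with the proposition by joint induction on $|G|$. The base $|G| = 1$ is trivial. At each proper subgroup $K \subsetneq G$, the restrictions $\alpha|_K$ and $\beta|_K$ satisfy the respective hypotheses in $RO(K)$, so the inductive hypothesis provides $\upi_{\alpha|_K}^K(H\uZ) \cong \uZ_K$ and $\upi_{\beta|_K}^K(H\uZ) \cong \uZ^\ast_K$; this yields the value $\Z$ at every level $G/K$ with $K \subsetneq G$, along with the correct Mackey structure on proper-subgroup levels. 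A parallel application of Anderson duality to $\beta$ identifies $\upi_\beta^G(H\uZ) \cong \Hom_L(\upi_\alpha^G(H\uZ), \Z)$ with the Ext contribution vanishing by Theorem \ref{thm:BG Cn} applied to $\alpha - 1$, whose fixed-point dimensions are all odd and which satisfies the monotonicity condition of that theorem since $|\alpha^K| \geq 0$ for $K \neq e$.

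The main obstacle is to pin down the top-level value and Mackey structure at $G/G$, since the mutually dual Anderson identifications are self-compatible and do not by themselves fix the rank. Proposition \ref{trres} says that $\tr^G_e \circ \res^G_e$ is multiplication by $|G|$ on $\upi_\alpha^G(H\uZ)(G/G)$; since this composition factors through $\upi_\alpha^G(H\uZ)(G/e) = \Z$, any rank $\geq 2$ is ruled out, and torsion-freeness from the Anderson $\Hom$ then gives $\upi_\alpha^G(H\uZ)(G/G) \cong \Z$. To distinguish $\uZ$ from other rank-one Mackey functors compatible with the inductively known restrictions on proper subgroups, I would use the explicit equivalence $\Sigma^{2-\lambda}H\uZ \simeq H\uZ^\ast$ for cyclic $G$, which realizes $\upi_{\lambda-2}^G(H\uZ) \cong \uZ^\ast$ directly as the anchor of the dual identification; Proposition \ref{prop:smash-rep} together with the cofiber sequences $S(\lambda^d)_+ \to S^0 \to S^{\lambda^d}$ and the vanishings from Theorem \ref{thm:BG Cn} reduce a general $\beta$ to this anchor, thereby pinning down the Mackey structure of $\upi_\beta^G(H\uZ)$ as $\uZ^\ast$.
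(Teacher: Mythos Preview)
Your Anderson-duality setup is correct and the $\Ext$ term does vanish, reducing the problem to computing $\upi_\beta^G(H\uZ)$ for $\beta=-\alpha+\lambda-2$. But the computation of $\upi_\beta$ is where the real content lies, and your proposal does not close it. The mutual duality $\upi_\alpha\cong\Hom_L(\upi_\beta,\Z)$ and $\upi_\beta\cong\Hom_L(\upi_\alpha,\Z)$ is self-consistent with several Mackey structures at the top level. Your rank argument via $\tr^G_e\circ\res^G_e=|G|$ rules out rank $\geq 2$ but not rank $0$ (for that you would also need the double-coset identity $\res^G_K\tr^G_K=[G:K]$ on the inductively known $\Z$ at $G/K$, forcing $\tr^G_K$ injective). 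Even granting the top level is $\Z$, nothing so far distinguishes $\res^G_K=1$ from $\res^G_K=[G:K]$. Your final paragraph proposes to reduce a general $\beta$ to the anchor $\lambda-2$ via the cofibre sequences $S(\lambda^d)_+\to S^0\to S^{\lambda^d}$ and Theorem~\ref{thm:BG Cn}, but this step is not justified: when $|\beta^K|<-2$ for some $K\neq e$, the relevant gradings fail the monotonicity hypothesis of that theorem (for instance $|\beta+\lambda-1|=1>-1$ while $|(\beta+\lambda-1)^G|\leq -3$), so the needed vanishings are unavailable and the reduction does not go through as stated.

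The paper bypasses this circularity with a single direct computation. It applies the cofibre sequence $S(\lambda)_+\to S^0\to S^\lambda$ at grading $\alpha+\lambda$ and shows both $\upi_{\alpha+\lambda}^G(H\uZ)=0$ (it is torsion by Proposition~\ref{trres}, hence Anderson-dual to $\Ext_L(\upi_{-3-\alpha}^G(H\uZ),\Z)$, and $-3-\alpha$ has all fixed points negative) and $\upi_{\alpha+\lambda-1}^G(H\uZ)=0$ (here Theorem~\ref{thm:BG Cn} \emph{does} apply, since $|(\alpha+\lambda-1)^K|=|\alpha^K|-1\geq -1$ for every $K$). The long exact sequence then yields $\upi_\alpha^G(H\uZ)\cong\upi_{\alpha+\lambda-1}^G(S(\lambda)_+\wedge H\uZ)\cong\uZ$ directly from the explicit computation \eqref{slamb}, with no need to determine $\upi_\beta$ or to untangle the top-level Mackey structure by hand.
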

	
	\begin{proof}
	The cofibre sequence 
	\[ S(\lambda)_+\to S^0 \to S^\lambda,\]
	implies the long exact sequence (by taking Mackey functor valued homotopy groups after smashing with $H\uZ$)
	\[ \cdots \upi_{\alpha +\lambda}^G(  S(\lambda)_+\wedge H\uZ) \to \upi_{\alpha+\lambda}^G ( H\uZ)\to \upi^G_{\alpha}( H\uZ)\to \upi_{\alpha+\lambda -1}^G(  S(\lambda)_+\wedge H\uZ)\to \upi_{\alpha+\lambda-1}^G ( H\uZ) \to \cdots\]
	putting $s=0$ in \eqref{longex}. Note that $\alpha+\lambda -1$ satisfies the hypothesis of Theorem \ref{thm:BG Cn}. The element $\alpha + \lambda$ has dimension $2$, so by Proposition \ref{trres}, the Mackey functor $\upi_{\alpha+\lambda}^G(H\uZ)$ features only torsion elements. By Anderson duality \eqref{and_comp}, we have 
	\[\upi_{\alpha+\lambda}^G(H\uZ) \cong \Ext_L( \upi_{-3-\alpha}^G(H\uZ),\Z).\]
Clearly from the given hypothesis, all the fixed points of $-3-\alpha$ are negative, therefore by \eqref{lezero}, $\upi_{\alpha+\lambda}^G(H\uZ)=0$. We obtain
\[\upi_{\alpha}^G(H\uZ) \cong \upi_{\alpha+\lambda -1}^G(S(\lambda)_+\wedge H\uZ) \cong \uZ,\]
by \eqref{slamb}. This completes the proof. 
	\end{proof}
	
	This helps us define the following classes.
	\begin{defn}\label{def:new u-class}
		Let $ j $ be a  multiple of $ i $. Then by Proposition \ref{prop:new u mfun}, the Mackey functor  $ \upi^G_{\lambda^j-\lambda^i}(H\uZ) $ is isomorphic to $ \uZ $.
		Define the class $ u_{\lambda^i-\lambda^j} \in  \pi^G_{\lambda^j-\lambda^i}(H\uZ)$ to be the element which  under restriction to the orbit $ G/e $ corresponds to $1\in \Z $. 
	\end{defn}
The multiplication of the class $ u_{\lambda^k-\lambda^{dk}} $ with $ a_{\lam^{dk}} $ is a multiple of $a_{\lam^k}  $. A similar description also appeared in \cite[p. 395]{HHR-realKthy}.
	\begin{prop}\label{prop:new u cls rel with a cls}  
		We have the following relation
		\[
		u_{\lam^{k}-\lam^{dk}}\, a_{\lam^{dk}}=da_{\lam^k}
		\]
		\[
		u_{\lam^{k}-\lam^{dk}}\, u_{\lam^{dk}}=u_{\lam^k}.
		\]
	\end{prop}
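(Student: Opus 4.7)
The plan is to prove both identities by locating each side in a Mackey-functor-valued $RO(G)$-graded homotopy group whose elements are detected by restriction to $G/e$, and then computing the relevant restrictions. The main geometric input is the $G$-equivariant map $\phi \colon S^{\lambda^k} \to S^{\lambda^{dk}}$ given by $z \mapsto z^d$; since $g$ acts on $\lambda^k$ by $e^{2\pi i k/n}$ and on $\lambda^{dk}$ by $e^{2\pi i dk/n}=(e^{2\pi i k/n})^d$, this map is equivariant. It has underlying degree $d$, preserves both the origin and the basepoint at infinity, and therefore satisfies $\phi \circ a_{\lambda^k}=a_{\lambda^{dk}}$ as pointed $G$-maps $S^0 \to S^{\lambda^{dk}}$.

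The second identity is straightforward. Both $u_{\lambda^k-\lambda^{dk}}\cdot u_{\lambda^{dk}}$ and $u_{\lambda^k}$ live in $\pi^G_{2-\lambda^k}(H\uZ)$; the grading $\alpha=2-\lambda^k$ satisfies $|\alpha|=0$ and $|\alpha^K|\in\{0,2\}$ for every subgroup $K$, so Proposition \ref{prop:new u mfun} identifies $\upi^G_{2-\lambda^k}(H\uZ)\cong \uZ$. Hence its value at $G/G$ is $\Z$ with $\res^G_e$ the identity, and each of the three $u$-classes above restricts to $1$ by construction, so the equation holds.

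For the first identity, let $\phi^{H\uZ}\in\pi^G_{\lambda^k-\lambda^{dk}}(H\uZ)$ denote the Hurewicz image of $\phi$, where we use the stable equivalence $[S^{\lambda^k},S^{\lambda^{dk}}]\cong[S^{\lambda^k-\lambda^{dk}},S^0]$ to put $\phi$ in standard form. Since the Hurewicz map is a ring map and the identity $\phi\cdot a_{\lambda^k}=\phi\circ a_{\lambda^k}=a_{\lambda^{dk}}$ already holds in $\pi^G_{-\lambda^{dk}}(S^0)$, we obtain
\[\phi^{H\uZ}\cdot a_{\lambda^k}=a_{\lambda^{dk}}\quad\text{in } \pi^G_{-\lambda^{dk}}(H\uZ).\]
Next, $u_{\lambda^k-\lambda^{dk}}\cdot \phi^{H\uZ}$ lies in $\pi^G_0(H\uZ)\cong\Z$ (again via the constant Mackey functor $\uZ$) and restricts at $G/e$ to $1\cdot d=d$, so $u_{\lambda^k-\lambda^{dk}}\cdot \phi^{H\uZ}=d$. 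Multiplying the previous display by $u_{\lambda^k-\lambda^{dk}}$ then gives
\[u_{\lambda^k-\lambda^{dk}}\cdot a_{\lambda^{dk}}=u_{\lambda^k-\lambda^{dk}}\cdot\phi^{H\uZ}\cdot a_{\lambda^k}=d\cdot a_{\lambda^k},\]
which is the first relation.

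The bookkeeping that needs the most care is the grading: $\phi$ naturally has $RO(G)$-degree $\lambda^k-\lambda^{dk}$, which is exactly opposite to the degree of $u_{\lambda^k-\lambda^{dk}}$, so that $u_{\lambda^k-\lambda^{dk}}\cdot\phi^{H\uZ}$ lands cleanly in $\pi^G_0(H\uZ)$ where detection by restriction is immediate. Beyond this, the entire argument reduces to multiplicativity of the Hurewicz map together with the normalization of the $u$-classes by their restrictions to $G/e$; no deeper obstruction appears.
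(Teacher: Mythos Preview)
Your proof is correct and follows essentially the same approach as the paper: introduce the degree-$d$ equivariant map $\phi\colon S^{\lambda^k}\to S^{\lambda^{dk}}$ (which the paper denotes $a_{\lambda^{dk}/\lambda^k}$), use $\phi\cdot a_{\lambda^k}=a_{\lambda^{dk}}$, and then detect the product $u_{\lambda^k-\lambda^{dk}}\cdot\phi^{H\uZ}$ in $\pi^G_0(H\uZ)\cong\Z$ via restriction to $G/e$. Your treatment of the second identity is slightly more explicit in citing Proposition~\ref{prop:new u mfun} to justify that $\upi^G_{2-\lambda^k}(H\uZ)\cong\uZ$, but the paper relies on the same fact implicitly through the definition of $u_V$.
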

\begin{proof}
	Let us  denote $ a_{{\lambda^{dk}}/{\lambda^k}} $  to be the map 
	$$a_{{\lambda^{dk}}/{\lambda^k}} :S^{\lambda^k}\to S^{\lambda^{dk}},
	$$
	under which  $ z\mapsto z^d $. Therefore, the underlying degree of this map is $ d $. Moreover, 
	$$ a_{{\lambda^{dk}}/{\lambda^k}} \,a_{\lambda^k} =a_{\lambda^{dk}}.
	$$ 
	Hence $ u_{\lam^{k}-\lam^{dk}}\, a_{\lam^{dk}}=u_{\lam^{k}-\lam^{dk}}\, a_{{\lambda^{dk}}/{\lambda^k}}\, a_{\lambda^k}, $
	where the element 
	$$ u_{\lam^{k}-\lam^{dk}}\, a_{{\lambda^{dk}}/{\lambda^k}} \in \uH^0_G(S^0;\uZ)(G/G)\cong \Z.
	$$
	 Since $ \res^G_e( u_{\lam^{k}-\lam^{dk}}) =1$ and $ \res^G_e(a_{{\lambda^{dk}}/{\lambda^k}})=d $, we obtain  $ u_{\lam^{k}-\lam^{dk}}\, a_{\lam^{dk}}=da_{\lam^k} $.
	 
	 Similarly, since $ \res^G_e(u_{\lam^{k}-\lam^{dk}}\, u_{\lam^{dk}})= \res^G_e(u_{\lam^k})=1$, we have $ u_{\lam^{k}-\lam^{dk}}\, u_{\lam^{dk}}=u_{\lam^k}. $
\end{proof}

The following will be used in  subsequent sections.
	\begin{prop}\label{thm:neg htpy gr}
		Let $ \alpha={\lambda^{i_1}+\cdots+\lambda^{i_k}} $. Then the group 
		\[
		H_G^{-\alpha} (S^0)=0 .
		\]
	\end{prop}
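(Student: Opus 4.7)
The statement is a direct application of Proposition~\ref{mfun0}, so my plan is to verify that $\alpha=\lambda^{i_1}+\cdots+\lambda^{i_k}$ satisfies its three hypotheses and then invoke it.

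First I would unpack the cohomology in terms of equivariant homotopy: by definition $H^{-\alpha}_G(S^0)=\upi^G_{\alpha}(H\uZ)(G/G)$, so it suffices to show that the Mackey-functor valued homotopy group $\upi^G_\alpha(H\uZ)$ vanishes. Each $\lambda^{i_j}$ is a $1$-dimensional complex representation, hence its underlying real representation is $2$-dimensional; therefore $\alpha$ is a sum of complex representations and is in particular even in the sense required by Proposition~\ref{mfun0}. Moreover $|\alpha|=2k>0$ (the case $k=0$ is excluded since then $\alpha=0$).

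Next I would check the fixed-point dimension condition. For any nontrivial subgroup $K\le G$, the fixed points $(\lambda^{i_j})^K$ are either all of $\lambda^{i_j}$ (when $K\subseteq\ker \lambda^{i_j}$) or zero. Consequently $|(\lambda^{i_j})^K|\in\{0,2\}$ and hence
\[
|\alpha^K|=\sum_{j=1}^{k}|(\lambda^{i_j})^K|\ge 0
\]
for every nontrivial subgroup $K$. All three hypotheses of Proposition~\ref{mfun0} are thus verified.

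Applying Proposition~\ref{mfun0} directly yields $\upi^G_\alpha(H\uZ)=0$ as a Mackey functor, and in particular its value at $G/G$ is trivial, giving $H^{-\alpha}_G(S^0)=0$. There is no real obstacle here; the content lies entirely in Proposition~\ref{mfun0}, whose proof uses Anderson duality to trade the positivity of the fixed-point dimensions of $\alpha$ for the negativity of those of $-\alpha+\lambda-3$, where \eqref{lezero} applies. So the only thing this proposition adds is the observation that the representations $\lambda^{i_1}+\cdots+\lambda^{i_k}$ form a concrete family to which Proposition~\ref{mfun0} applies, a fact that will be used repeatedly in the cohomology ring computations later in the paper.
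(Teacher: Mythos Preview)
Your argument is correct: the representation $\alpha=\lambda^{i_1}+\cdots+\lambda^{i_k}$ visibly satisfies the hypotheses of Proposition~\ref{mfun0} (each $\lambda^{i_j}$ has $K$-fixed points of real dimension $0$ or $2$, so $|\alpha^K|$ is even and $\geq 0$ for every $K$, and $|\alpha|=2k>0$), and that proposition yields $\upi_\alpha^G(H\uZ)=0$, hence $H_G^{-\alpha}(S^0)=0$.

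This is, however, a genuinely different route from the paper's proof. The paper argues directly by induction on $k$, using the cofibre sequence $S(\lambda^{i_s})_+\to S^0\to S^{\lambda^{i_s}}$ and the further cofibre sequence ${G/H_s}_+\stackrel{1-g}{\to}{G/H_s}_+\to S(\lambda^{i_s})_+$ to identify the relevant boundary terms; the key input is just that $H^j_G({G/H_s}_+)=0$ for $j\leq -1$ and that the map $r^\ast\colon H^0_G(S^0)\to H^0_G(S(\lambda^{i_s})_+)$ is identified with $\res^G_{H_s}$ for $\uZ$, which is an isomorphism. Your approach is shorter once Proposition~\ref{mfun0} is in hand, but it leans on Anderson duality \eqref{and_comp} and the vanishing statement \eqref{lezero}. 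The paper's argument is more elementary and self-contained, needing only the cell structure of $S(\lambda^{i_s})$ and the values of $\uZ$ on orbits; in particular it makes transparent that nothing deeper than the injectivity of the restriction maps of $\uZ$ is required.
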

\begin{proof}
	 For a representation $ \lambda^{i_s} $, we have the cofibre sequence $ S(\lambda^{i_s})_+\xrightarrow{r} S^0\to S^{\lambda^{i_s}} $. If $ H_s $ is the kernel of the representation  $ \lambda^{i_s} $, then we have the cofibre sequence  $ {G/H_s}_+  \stackrel{1-g}{\to} {G/H_s}_+\to S(\lambda^{i_s})_+$. To see $ H_G^{-\lambda^{i_1}} (S^0)=0$, consider the long exact sequence 
	\[
0\to H^{-1}_G(S(\lambda^{i_1})_+)\to H_G^{-\lambda^{i_1}}(S^0)\to 	H^0_G(S^0)\xrightarrow{r^*} H_G^0(S(\lambda^{i_1})_+)\to \cdots 
	\]
	The first term is zero  as $H^{j}_G({G/H_{1}}_+)=0  $ for $ j\le -1 $. Also,  $ H_G^0(S(\lambda^{i_1})_+)\cong  H^{0}_G({G/H_{1}}_+)\cong \Z$. Moreover, under this identification, the map $ r^* $ is the restriction map $ res^G_{H_1} (\uZ)$, hence $ r^* $ is an isomorphism. Thus $ H_G^{-\lambda^{i_1}} (S^0)=0$. Using similar arguments, the result  follows by induction. 
\end{proof}
\begin{exam}\label{exam S sigma }
	 The Mackey functor $ \underline{H}_{C_{2}}^{0}(S^\sigma;\uZ)$ is zero. To see this consider the cofibre sequence $${C_{2}/e}_{+}\ra S^0 \ra S^{\sigma}$$ and the associated long exact sequence in cohomology 
	$$0\to  \underline{H}^{0}_{C_{2}}(S^{\sigma};\uZ)\ra \underline{H}^{0}_{C_{2}}(S^0;\uZ)\ra \underline{H}^{0}_{C_{2}}({C_{2}/e}_{+};\uZ)\ra \uH_{C_2}^{1}(S^{\sigma};\uZ)\ra \dots $$
	The restriction map $\underline{H}_{C_2}^{0}(S^0;\uZ)\ra \underline{H}_{C_2}^{0}({C_{2}/e}_{+};\uZ) $ is injective.
	Thus we have $\underline{H}_{C_2}^{0}(S^{\sigma};\uZ)= 0$.
\end{exam}
	Next we note a homology decomposition theorem for a cyclic group by generalizing Lewis's approach. We say that a representation $W$ is even if all the fixed points of $W$ are even dimensional.

	\begin{thm}\label{thmlewis}
		Let $X$ be a finite type (that is, with finitely many cells of each dimension) generalised $G$-cell complex with only even dimensional cells of the form $D(W)$. Suppose further that for cells $D(W), D(V)$ we have the condition 
\[\dim W^H<\dim V^H \implies |W^K|\leq|V^K| \mbox{ for every subgroup } K \mbox{ containing }H.\]
 Let $\CC$ denote the collection of cells of $X$ under the above description. Then,
 $$H\uZ\smas X_+ \simeq H\uZ\vee \bigvee_{D(W)\in \CC}H\uZ \smas S^{W}.$$  
	\end{thm}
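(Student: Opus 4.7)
The plan is to build the splitting inductively via the cellular filtration of $X_+$. Order the cells $D(W_1), D(W_2), \ldots$ so that $\dim W_i^e$ is non-decreasing, and set $X_k = X_{k-1} \cup_{S(W_k)} D(W_k)$. Each attachment produces a cofibre sequence
\[(X_{k-1})_+ \to (X_k)_+ \to S^{W_k},\]
and after smashing with $H\uZ$ I want to show the connecting map $S^{W_k} \wedge H\uZ \to \Sigma (X_{k-1})_+ \wedge H\uZ$ is nullhomotopic. Combined with the inductive hypothesis that $(X_{k-1})_+ \wedge H\uZ$ already splits as $\bigvee_j H\uZ \wedge S^{W_j}$ (with the convention $W_0 := 0$ accounting for the leading $H\uZ$/basepoint summand), this splits off one more piece at stage $k$; the sequential colimit then handles the finite-type case.

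The null-homotopy reduces, one summand at a time using the inductive splitting of the target, to showing that
\[[S^{W_k}, \Sigma H\uZ \wedge S^{W_j}]^G \cong \pi^G_{W_k - W_j - 1}(H\uZ) = 0\]
for every $j < k$. Setting $\alpha = W_k - W_j - 1$, the evenness of every $W^H$ forces $|\alpha^H|$ to be odd for all subgroups $H \leq G$. Furthermore $|\alpha^H| > -1$ precisely means $\dim W_j^H < \dim W_k^H$, and applying the cell-structure hypothesis with $W = W_j$, $V = W_k$ delivers $|W_j^K| \leq |W_k^K|$, hence $|\alpha^K| \geq -1$, for all $K \supseteq H$. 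These are exactly the hypotheses of Theorem \ref{thm:BG Cn}, which therefore gives the vanishing and hence nullness of the connecting map.

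The main obstacle I anticipate is making sure the cell-structure condition in the theorem matches up cleanly with the fixed-point monotonicity input required by Theorem \ref{thm:BG Cn}; once that bookkeeping is in place, the remainder is a formal induction, together with compactness of $S^{W_k}$ (which lets one read off the components of the connecting map summand by summand from the wedge decomposition of $(X_{k-1})_+ \wedge H\uZ$).
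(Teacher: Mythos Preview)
Your proposal is correct and follows essentially the same approach as the paper: induct along the cellular filtration, smash each attaching cofibre sequence with $H\uZ$, and kill the connecting map summand by summand by verifying that $\alpha = W_k - W_j - 1$ satisfies the hypotheses of Theorem~\ref{thm:BG Cn}. Your write-up is in fact a bit more careful than the paper's (you make the ordering of cells, the basepoint summand, and the compactness argument explicit), but the underlying argument is the same.
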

	\begin{proof}
		The main step of the proof involves a pushout diagram of the form \begin{myeq}\label{orb-conn-sum-cof}
			\xymatrix{ S(V) \ar[r] \ar[d] & D(V) \ar[d] \\ 
				X \ar[r] & Y }
		\end{myeq}
		where we know that $H\uZ\smas X\simeq\bigvee_{i=1}^{k} H\uZ\smas S^{W_i}$. Look at the cofibre sequence 
$$H\uZ\smas X\to H\uZ\smas Y\to H\uZ\smas S^V.$$ 
		The connecting map goes from $H\uZ\smas S^{V}$ to $\bigvee_{i=1}^{k}H\uZ\smas S^{W_{i} + 1}$. For each $i$, $H\uZ\smas S^{V}\ra H\uZ\smas S^{W_{i} + 1}$ is in $\underline{\pi}_{\a}^{G}(H\uZ)$ where $\a = V - W_{i}-1$. Using Theorem \ref{thm:BG Cn} for this $\a$ we get the connecting map to be $0$. Hence the result follows.
	\end{proof}
	\section{Additive homology decompositions for projective spaces}\label{sec:additive}
	In this section,  we show that $H\uZ\smas P(V)$ is a wedge of suspensions of $H\uZ$ in many examples. Along the way, we also construct suitable bases for the homology which are used in later sections.
	\begin{mysubsection}{Cellular filtration of complex projective spaces}
		For a complex representation $V$ of $G$, the equivariant complex projective space $ P(V) $ is the set of complex lines in $V$. It is constructed by attaching even dimensional cells of the type $ D(W) $
		for representations $W$. We note that $P(V)$ and $P(V\otimes \phi)$ are homeomorphic as $G$-spaces for a one dimensional $G$-representation $\phi$. As $G$ is Abelian, the complex representation $V$ is a direct sum of $\phi_{i}$ where $\dim(\phi_{i})= 1$. If we write $V=V'\oplus\phi$ for a one-dimensional representation $\phi$, we have a cofibre sequence 
		$$P(V')\ra P(V)\ra S^{\phi^{-1}\otimes V'}.$$
		As a consequence, we obtain a cellular filtration of $P(V)$, which we proceed to describe now. Write $V=\phi_{1}+\phi_{2}+\dots+\phi_{n}$ and $V_{i}= \phi_{1}+\phi_{2}+\dots+\phi_{i}$. The cellular filtration is given by $$P(V_{1})\subseteq P(V_{2})\subseteq \dots \subseteq P(V_{n})= P(V)$$ with cofibre sequences
		$$P(V_{i})\ra P(V_{i+1})\ra S^{\phi_{i+1}^{-1}\otimes V_{i}}.$$
		This shows that $P(V_{i+1})$ is obtained from $P(V_{i})$ by attaching a cell of the type $D(W_{i})$ for $ W_{i} = \phi^{-1}_{i+1}\otimes V_{i}$.
		Note that this filtration depends on the choice of the ordering of the $ \phi_i $.
Via Theorem \ref{thmlewis}, we try for decompositions of $H\uZ\smas P(V)$ as a wedge of suspensions of $H\uZ$. 
	\end{mysubsection}

	\begin{exam}
	For $G=C_p$ for $p$ odd, consider $P(V)$ for $V = \lambda^0 +2 \lambda + \lambda^2$. We may write $V=\lam^{0}+\lam + \lam + \lam^2$ and obtain the corresponding cellular filtration on $P(V)$. The corresponding cells are $D(W_{m})$ for $W_{m} = \phi_{m}^{-1}\otimes V_{m-1}$ for $m\leq 4$. Observe that $|W_3| < |W_4|$ but $|W_{3}^{C_p}| =2 > 0 = |W_{4}^{C_p}|$ which means that the hypothesis of Theorem \ref{thmlewis} is not satisfied. However a simple rearrangement allows us to write down a homology decomposition. Write $V= \lam^0 + \lam +\lam^2 +\lam$, and we now see that the resulting $W_{i}$ satisfy the hypothesis of Theorem \ref{thmlewis}. This implies 
		$$H\uZ\smas P(V)_+\simeq H\uZ\bigvee H\uZ\smas S^{\lambda}\bigvee H\uZ\smas S^{2\lambda}\bigvee H\uZ\smas S^{2\lambda + 2}.$$
	\end{exam}
	
	\vspace{.5cm}
	
	In the following theorem, let $V =  n_0\lambda^0 + n_1\lambda^1 + \dots + n_{p-1}\lambda^{p-1} $ be any $C_p$-representation. Except for the fact that the $n_i$'s are non-negative, no other condition is imposed on $n_{i}$. We may assume $n_{0}\geq n_{i}$ by replacing $V$ with $V\otimes\lam^{-j}$ if necessary. 
	
	\begin{thm}\label{thm V with uneven coeff}
		Let $V=  n_0\lambda^0 + n_1\lambda^1 + \dots + n_{p-1}\lambda^{p-1}$ be a complex $C_p$-representation and $n_0\geq n_i\geq 0$  for all $ i $. Then 
		$$H\uZ\smas P(V)_+ \simeq H\uZ\bigvee_{i=1}^{a_1-1}\Sigma^{i\lambda} H\uZ\bigvee_{i=a_1-1}^{a_1+a_2-2} \Sigma^{i\lambda+2}H\uZ\bigvee \dots \bigvee_{i=(\sum_{j=1}^{n_0-1}a_{j})- (n_0 - 1)}^{(\sum_{j=1}^{n_0}a_{j})- n_0 } \Sigma^{i\lambda + 2(n_0-1)}H\uZ$$ 
		where  $a_i$ is the cardinality of the set $\{ n_j: n_j\geq i\} $.
	\end{thm}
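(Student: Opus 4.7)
The strategy is to apply Theorem~\ref{thmlewis} after choosing an appropriate ordering of the one-dimensional summands of $V$. Write $N = \dim_{\C} V = n_0 + n_1 + \cdots + n_{p-1}$ and decompose $V = \phi_1 + \phi_2 + \cdots + \phi_N$, so that the cellular filtration $\pt = P(V_1) \subset P(V_2) \subset \cdots \subset P(V_N) = P(V)$ attaches cells $W_m = \phi_{m+1}^{-1} \otimes V_m$ of complex dimension $m$. For $G = C_p$, which has only two subgroups, the hypothesis of Theorem~\ref{thmlewis} reduces to the statement that the underlying and fixed-point dimensions of the cells $W_m$ should be simultaneously non-decreasing; since $|W_m| = 2m$ is automatically strictly increasing in $m$, what needs to be arranged is $|W_m^{C_p}| \leq |W_{m'}^{C_p}|$ whenever $m < m'$.

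To achieve this, I would arrange the $\phi_j$'s in ``rounds.'' Set $S_r = \{j : n_j \geq r\}$, so $|S_r| = a_r$ and $S_1 \supseteq S_2 \supseteq \cdots \supseteq S_{n_0}$. Let round $r$ consist of one copy of each $\lambda^j$ with $j \in S_r$, and concatenate rounds $1, 2, \ldots, n_0$ (placing $\lambda^0$ first within round $1$). Since $n_j = |\{r : j \in S_r\}|$, this recovers $V$. The key observation is that if $\phi_{m+1}$ sits in round $r$, then $\phi_{m+1}$ has appeared in exactly $r - 1$ of the earlier positions $\phi_1, \ldots, \phi_m$, because the rounds containing $\lambda^{j}$ are precisely rounds $1, \ldots, n_j$ and the current round satisfies $r \leq n_{\phi_{m+1}}$.

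It follows at once that $|W_m^{C_p}| = 2(r - 1)$, which is manifestly non-decreasing in $m$, and that the non-trivial part of $W_m$ consists of $m - (r - 1) = m - r + 1$ non-trivial one-dimensional complex summands. Theorem~\ref{thmlewis} then yields
$$H\uZ \smas P(V)_+ \simeq H\uZ \vee \bigvee_{m=1}^{N-1} H\uZ \smas S^{W_m}.$$
Finally, apply Proposition~\ref{prop:smash-rep} to identify $H\uZ \smas S^{\lambda^j} \simeq H\uZ \smas S^{\lambda}$ for each $j$ coprime to $p$, so every non-trivial summand of $W_m$ may be replaced by $\lambda$. This gives $H\uZ \smas S^{W_m} \simeq H\uZ \smas S^{(m - r + 1)\lambda + 2(r - 1)}$ whenever $\phi_{m+1}$ lies in round $r$; reindexing by $i = m - r + 1$ within each round reproduces the wedge in the stated form.

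The main obstacle is not conceptual but combinatorial: verifying that the round ordering gives exactly the claimed fixed-point dimensions in each round, and matching the ranges of $i$ against the formula in the theorem. All deeper input has already been packaged into Theorem~\ref{thmlewis} and Proposition~\ref{prop:smash-rep}.
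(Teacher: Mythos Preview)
Your proposal is correct and follows essentially the same approach as the paper: the ``rounds'' ordering you describe is exactly the paper's decomposition $V = A_1 + A_2 + \cdots + A_{n_0}$ with $A_r = \sum_{n_j \geq r} \lambda^j$, and both arguments then invoke Theorem~\ref{thmlewis} together with Proposition~\ref{prop:smash-rep}. The only difference is organizational---the paper runs an induction on $\dim_\C V$ to verify the fixed-point hypothesis at each stage, whereas you compute $|W_m^{C_p}| = 2(r-1)$ directly for all cells at once; your formulation is slightly more streamlined but the content is the same.
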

	
	\begin{proof}
		We arrange the irreducible representations in $V$ in such a way  that 
		$$V = A_1 + A_2 + \dots + A_{n_0}$$
		where $A_1 = \sum_{n_i \geq 1} \lambda^i, A_2 = \sum_{n_i\geq 2}\lambda^i, \dots , A_{n_0} = \sum_{n_i\geq n_0}\lambda^i$. Then, $a_i$ is the number of summands appearing in $A_i$.
		
		We consider the cell complex structure on $P(V)$ associated to $V=\sum_{i=1}^{\dim_{\C}V}\phi_i$ where $\phi_i$'s are defined by 
		$$\phi_{(\sum_{l=1}^{j-1} a_{l}) +1}+\phi_{(\sum_{l=1}^{j-1} a_{l})+2}+ \dots + \phi_{(\sum_{l=1}^{j-1} a_{l}) + a_{j}} = A_{j} = \sum_{n_i\geq j}\lambda^i $$
		for $j\geq 1$, assuming $a_{0}=0 $, and the powers of $\lam$ above are arranged in increasing order from $0$ to $p-1$.
		To prove the statement, we use induction on the sum $n_0 + n_1 +\dots + n_{p-1} = \dim_{\C}V$. 
		
		When $n_0 + \dots + n_{p-1}=1$, that is, $n_0 = 1$ and $n_i=0 $ $ \forall~ 1\leq i \leq p-1$, then $V= \lambda^0$. Thus $P(V)_+ = S^0$ and 
		$$H\uZ\smas P(V)_+ \simeq H\uZ\smas S^0 \simeq H\uZ.$$
		Now suppose that the statement is true for integers less than $n_0 + n_1 + \dots + n_{p-1}$. Using the notation $V_k =\sum _{i=1}^k\phi_{i} $ as above, the inductive hypothesis implies the result for $X=P(V_k)$  whenever $k < \dim_{\C}V$. In particular, letting $m=\sum n_i - 1$,
		$$V=V_m + \phi_{\dim_{\C}V} = V_m + \lambda^s $$
		for some integer $s$. Let $a'_{i}, n'_{i}$ and $A'_{i}$ denote the values for $V_m$ that correspond to $a_{i}, n_{i}$ and $A_{i}$ for $V$. Observe that $a'_{i} = a_{i}$ if $i<n_0$, $a'_{n_{0}} = a_{n_{0}} - 1$, and our choice of the $\phi_{i}$ implies that $n_{s} = n_{0}$. The induction hypothesis implies 
		\begin{myeq}\label{decforVm}
			{H\uZ\smas P(V_m)_+ \simeq H\uZ\bigvee_{i=1}^{a'_1-1}\Sigma^{i\lambda} H\uZ\bigvee_{i=a'_1-1}^{a'_1+a'_2-2} \Sigma^{i\lambda+2}H\uZ\bigvee \dots \bigvee_{i=(\sum_{j=1}^{n'_0-1}a'_{j})-(n'_0-1)}^{(\sum_{j=1}^{n'_0}a'_{j})- n'_0 } \Sigma^{i\lambda + 2(n'_0-1)}H\uZ}.
		\end{myeq}
		We see that, either $s=0$ or if $s\neq 0$, $n_{s}=n_{0}$. We first consider the latter case. Then $n'_0 = n_0$, $a'_i=a_i$ whenever $i< n_0$ and $a'_{n_0}= a_{n_0}-1$. Thus \eqref{decforVm} reduces to 
		$$H\uZ\smas P(V_m)_+ \simeq H\uZ\bigvee_{i=1}^{a_1-1}\Sigma^{i\lambda} H\uZ\bigvee_{i=a_1-1}^{a_1+a_2-2} \Sigma^{i\lambda+2}H\uZ\bigvee \dots \bigvee_{i=(\sum_{j=1}^{n_0-1}a_{j})-(n_0-1)}^{(\sum_{j=1}^{n_0}a_{j})-( n_0 +1) } \Sigma^{i\lambda + 2(n_0-1)}H\uZ.$$
		
		As $n_0 = n_s$, the coefficient of $\lambda^s$  in  $V_m = V-\lambda^s$  is  $n_s - 1 = n_0 -1$, which in turn implies the coefficient of  $\lambda^0$   in  $\lambda^{-s}\otimes V_m$  is $n_0 -1$. Thus $ |(\lambda^{-s}\otimes V_m)^{C_p}| = 2(n_0 - 1)$. We look at representations $i\lambda +j\lambda^0$ where $i \in \{0,1, \dots , a_0+a_1+\dots + a_{n_0 -1} -1- n_0\}$ and $j \in \{0,1,\dots , n_0 -1 \}$. Then,
		$$|(i\lambda + j\lambda^0)^{C_p}| = 2j \leq 2(n_0 - 1) = |(\lambda^{-s}\otimes V_m)^{C_p}|.$$
		
		When $s = 0 $ we have $ n'_0 = n_0-1$, $a'_i = a_i$ for all $i<n'_0$. In this case, \eqref{decforVm} reduces to,
		$$H\uZ\smas P(V_m)_+ \simeq H\uZ\bigvee_{i=1}^{a_1-1}\Sigma^{i\lambda} H\uZ\bigvee_{i=a_1-1}^{a_1+a_2-2} \Sigma^{i\lambda+2}H\uZ\bigvee \dots \bigvee_{i=(\sum_{j=1}^{n_0-2}a_{j})-(n_0-2)}^{(\sum_{j=1}^{n_0-1}a_{j})- (n_0-1) } \Sigma^{i\lambda + 2(n_0-2)}H\uZ.$$  
		Note that $a_{n_0}= 1 $ that  is $a_{n_0}-1=0$. So  we  can rewrite the equation as 
		$$H\uZ\smas P(V_m)_+ \simeq H\uZ\bigvee_{i=1}^{a_1-1}\Sigma^{i\lambda} H\uZ\bigvee_{i=a_1-1}^{a_1+a_2-2} \Sigma^{i\lambda+2}H\uZ\bigvee \dots \bigvee_{i=(\sum_{j=1}^{n_0-2}a_{j})-(n_0-2)}^{(\sum_{j=1}^{n_0}a_{j})- n_0 }\Sigma^{i\lambda + 2(n_0-2)}H\uZ.$$
		Since $s=0, \lambda^{-s}\otimes V_m = V_m$. Now for all representations of the form $i\lambda + j\lambda^0$ where $i \in \{0,1,\dots, (a_0+a_1+\dots + a_{n_0 - 1}-n_0) \}$ and $j \in \{0,1,\dots,(n_0 - 2)\}$ we have 
		$$|(i\lambda + j\lambda^0)^{C_p}| = 2j \leq 2(n_0-2) < 2(n_0 -1) = |V_{m}^{C_p}| = |(\lambda^{-s}\otimes V_m)^{C_p}|.$$
		These calculations imply that both cases satisfy the hypothesis of Theorem \ref{thmlewis}. Thus, we obtain  
		$$H\uZ \smas P(V)_+ \simeq H\uZ \smas P(V_m)_+\bigvee H\uZ \smas S^{\lambda^{-s}\otimes V_m}.$$
		Using the facts $H\uZ\smas S^{\lambda^i} \simeq H\uZ \smas S^{\lambda}$ and $H\uZ\smas S^{\lambda^0} \simeq H\uZ \smas S^2$, 
		$$H\uZ\smas P(V)_+ \simeq  H\uZ\smas P(V_m)_+ \bigvee H\uZ\smas  S^{(\sum_{j=1}^{n_{0}}a_j - n_0) \lambda + 2(n_0 -1)}$$ 
		for both $s=0$ and $s\neq 0$.
	\end{proof}
	
	We elaborate this theorem with an example below. 
	\begin{exam}
		Let $V = 3\lambda^0+ 2\lambda + 4\lambda^2$ be a complex $C_p$-representation. Since among the three coefficients appearing in the expression for $V$ here, $4$ is the largest, we consider $V\otimes\lambda^{-2} = 4\lambda^0 + 3\lambda^{p-2} + 2 \lambda^{p-1}$. For simplicity, we call this also as $V$. We now write $V$ as a sum of $A_i$, that is, $V= A_1 + A_2 + A_3 + A_4$, where $A_1 = \lambda^0 + \lambda^{p-2} + \lambda^{p-1}$, $A_2 = \lambda^0 + \lambda^{p-2} + \lambda^{p-1}$, $A_3 = \lambda^{0}+\lambda^{p-2}$, $A_4 = \lambda^0$.
		
		Note that $a_1 = 3, a_2 = 3, a_3 = 2$ and $a_4 = 1$. From Theorem \ref{thm V with uneven coeff}, we conclude that 
		$$H\uZ \smas P(V)_+ \simeq H\uZ \smas P(V_m)_+\bigvee H\uZ \smas S^{V_m}$$  which is same as  
		$$H\underline{\mathbb{Z}}\bigvee_{i=1}^{2}\Sigma^{i\lambda} H\uZ\bigvee_{i=2}^{4} \Sigma^{i\lambda+2}H\uZ \bigvee_{i=4}^{5 } \Sigma^{i\lambda + 4}H\uZ\bigvee \Sigma^{5 \lambda + 6}H\uZ.$$
	\end{exam}

\begin{mysubsection}{Decompositions over general cyclic groups}
	We now proceed towards the equivariant homology decomposition of complex projective spaces where the group $G$ is any cyclic group $C_n$. Note that a complete $C_n$-universe $\sU$ may be constructed as 
$$\sU = \mathop{\lim_{\longrightarrow}}_{m} m\rho$$ 
where $\rho$ is the regular representation. In the remaining part of the section, we stick to these representations to avoid the involved expressions as in Theorem \ref{thm V with uneven coeff} for the general cyclic groups.
	\begin{thm}\label{thm main simple}
		We have the decomposition 
		$$H\uZ\smas P(m\rho)_+ \simeq \bigvee_{i=0}^{nm-1}H\uZ\smas S^{\phi_{i}}$$ 
		where $\phi_{0}= 0$ and $\phi_{i} =\lam^{-i}(1_\C+ \lam + \lam^2 +\dots + \lam^{i-1})$ for $i> 0$. Passing to the homotopy colimit, for a $C_n$-universe $\sU$, we obtain 
		$$H\uZ\smas P(\sU)_+ \simeq \bigvee_{i=0}^{\infty}H\uZ\smas S^{\phi_{i}}.$$
	\end{thm}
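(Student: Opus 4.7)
The plan is to apply the decomposition criterion Theorem~\ref{thmlewis} to the cellular filtration of $P(m\rho)$ coming from a well-chosen ordering of the one-dimensional summands. First I would write $m\rho = \phi_1 + \cdots + \phi_{nm}$ with $\phi_j = \lambda^{(j-1)\bmod n}$, i.e.\ cycling through $1_\C, \lambda, \ldots, \lambda^{n-1}$ exactly $m$ times. With $V_i = \phi_1 + \cdots + \phi_i$ as in the general setup of \S\ref{sec:additive}, the $i$-th attaching cell is $W_i := \phi_{i+1}^{-1}\otimes V_i$. A short calculation, using that $\lambda^{-r}\otimes \rho = \rho$ (because tensoring with $\lambda^{-r}$ just permutes the summands of $\rho$), shows that writing $i = qn+r$ with $0 \leq r < n$ gives $W_i = q\rho + \phi_r$. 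An immediate regrouping of the defining sum $\phi_i = \lambda^{-i} + \cdots + \lambda^{-1}$ yields the same identity $\phi_{qn+r} = q\rho + \phi_r$. Hence $W_i = \phi_i$ in $RO(G)$ for every $i = 1, \ldots, nm-1$, so the cell decomposition of $P(m\rho)$ consists of exactly one cell $D(\phi_i)$ for each $0 \leq i \leq nm-1$ (the $i=0$ cell being the basepoint).

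Next I would verify the hypothesis of Theorem~\ref{thmlewis}. For a subgroup $H \leq G$ of order $d$, the $H$-fixed summands of $\phi_i$ are exactly the $\lambda^{-k}$ with $d \mid k$ and $1 \leq k \leq i$, so
\[ |\phi_i^H| \;=\; 2\,\lfloor i/d\rfloor; \]
in particular each cell is even-dimensional. Suppose $|\phi_i^H| < |\phi_j^H|$ for some subgroup $H$ of order $d$; then $\lfloor i/d\rfloor < \lfloor j/d\rfloor$ forces $i < j$. For any subgroup $K$ containing $H$ the order $|K|$ is a multiple of $d$, and $i < j$ gives $|\phi_i^K| = 2\lfloor i/|K|\rfloor \leq 2\lfloor j/|K|\rfloor = |\phi_j^K|$. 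Theorem~\ref{thmlewis} therefore applies and yields
\[ H\uZ \smas P(m\rho)_+ \;\simeq\; H\uZ \;\vee\; \bigvee_{i=1}^{nm-1} H\uZ \smas S^{\phi_i} \;\simeq\; \bigvee_{i=0}^{nm-1} H\uZ \smas S^{\phi_i},\]
where the last equivalence absorbs the leading $H\uZ$ as the $\phi_0 = 0$ summand.

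For the statement about $P(\sU)$, the inclusions $m\rho \hookrightarrow (m+1)\rho$ induced by the chosen ordering realize $P(m\rho) \hookrightarrow P((m+1)\rho)$ as the inclusion of an initial segment of the cell filtration, so the wedge decompositions above are compatible with these maps. Since $H\uZ \smas -$ commutes with filtered homotopy colimits and $P(\sU) \simeq \hocolim_m P(m\rho)$, passing to the colimit gives the desired infinite wedge $\bigvee_{i=0}^{\infty} H\uZ \smas S^{\phi_i}$. The only substantive input here is Theorem~\ref{thmlewis}, and the main (minor) obstacle is really the bookkeeping identifying $W_i$ with $\phi_i$; once that is in place, the monotonicity of $\lfloor i/d\rfloor$ in $i$ and in $d^{-1}$ makes the dimension hypothesis automatic.
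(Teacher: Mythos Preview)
Your proof is correct and follows essentially the same route as the paper: both use the cellular filtration coming from the ordering $\lambda^0,\lambda^1,\ldots,\lambda^{n-1},\lambda^0,\ldots$ and the vanishing result Theorem~\ref{thm:BG Cn}, the only difference being that you invoke it via the packaged criterion Theorem~\ref{thmlewis} (after verifying $|\phi_i^H|=2\lfloor i/|H|\rfloor$ is monotone in $i$) while the paper reruns the induction and appeals to Theorem~\ref{thm:BG Cn} directly. One small presentational point: you overload the symbol $\phi_j$ for both the one-dimensional summands of $m\rho$ and the representations $\phi_i$ of the statement, which is momentarily confusing even though the computation $W_i=\phi_i$ is correct.
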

	
	\begin{proof}
		We use induction on $k$ to show that 
		$$H\uZ\smas P(V_{k})_{+}\simeq \bigvee_{i=0}^{k}H\uZ\smas S^{\phi_{i}}$$
		for  $V_{k}=\sum_{i=0}^{k}\lam^{i}$. 
		The statement holds for $k=0$ since $V_{0}= \lam^0 = 1_{\C}$ and $H\uZ\smas P(V_{0})_+ \simeq H\uZ\smas S^0$. 
		
		Now let the statement be true for $V_{k}$ i.e $H\uZ\smas P(V_k)_+ \simeq \bigvee_{i=0}^{k} H\uZ\smas S^{\phi_{i}}$. We have the cofibre sequence 
		$$P(V_k)_+\ra P(V_{k+1})_+\ra S^{\phi_{k+1}}.$$
		Smashing with $H\uZ$ we have 
		$$H\uZ\smas P(V_k)_+\ra H\uZ \smas P(V_{k+1})_+\ra H\uZ \smas S^{\phi_{k+1}}.$$
		Thus the connecting map goes from $H\uZ\smas S^{\phi_{k+1}}$ to $\bigvee_{i=0}^{k} H\uZ\smas S^{\phi_{i}+1}$. For each $i\leq k$ the map $H\uZ\smas S^{\phi_{k+1}}\ra H\uZ\smas S^{\phi_{i}+1}$ belongs to $\pi_{0}^{C_{n}}(H\uZ\smas S^{\phi_{i}+1-\phi_{k+1}})\simeq \pi_{\phi_{k+1}-\phi_{i}-1}^{C_n}(H\uZ)$. Taking $\a =\phi_{k+1}-\phi_{i}-1$, we have $|\a|$ is odd. Note that 
		$$|\a^{H}| = |\phi_{k+1}^{H}|-|\phi_{i}^{H}|-1= |(\sum_{j=i+1}^{k+1}\lam^{j})^{H}| - 1 \geq -1.$$ 
		Applying Theorem \ref{thm:BG Cn} we have that the connecting map is $0$. Thus 
		$$H\uZ\smas P(V_{k+1})_+ \simeq \bigvee_{i=0}^{k+1} H\uZ \smas S^{\phi_{i}}.$$
		Hence the theorem follows.
	\end{proof}
	
	We may also consider a variant in the case $G=C_{2}$ which acts on $\C P^{n}$ by complex conjugation. The resulting $C_{2}$-space is denoted ${\C P}_{\tau}^n$ for $1\leq n \leq \infty$. Then, the fixed points $(\C P^{n}_{\tau})^{C_{2}} \simeq \mathbb{R} P^n$ which shows that this example is homotopically different from the example above.
	
	\begin{thm}\label{prop:cpnconj}
		We have the decomposition 
		$$H\uZ \smas {\C P_{\tau}^{n}}_+  \simeq \bigvee_{i=0}^{n}H\uZ\smas S^{i+i\sigma }\simeq \bigvee_{i=0}^{n}H\uZ\smas S^{i\rho}.$$ 
		Passing to the homotopy colimit we have 
		$$H\uZ \smas {\C P_{\tau}^{\infty}}_+  \simeq \bigvee_{i=0}^{\infty}H\uZ\smas S^{i+i \sigma }\simeq \bigvee_{i=0}^{\infty}H\uZ\smas S^{i\rho}.$$
	\end{thm}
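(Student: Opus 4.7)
The plan is to follow the same inductive strategy as in Theorem \ref{thm main simple}, using the standard cellular filtration $\C P^0 \subset \C P^1 \subset \cdots \subset \C P^n$ in which the $k$-th cell is $D(\C^k)$. Under complex conjugation each copy of $\C$ is the $C_2$-representation $1+\sigma = \rho$, so the equivariant cell attached at stage $k$ is $D(k\rho)$, producing cofiber sequences
\[
{\C P^{k-1}_\tau}_+ \to {\C P^k_\tau}_+ \to S^{k\rho}, \qquad 1 \le k \le n,
\]
with the base case $\C P^0_\tau = \mathrm{pt}$ handled trivially.

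Given the inductive hypothesis $H\uZ \smas {\C P^{k-1}_\tau}_+ \simeq \bigvee_{i=0}^{k-1} H\uZ \smas S^{i\rho}$, I would split the smashed cofiber sequence by showing that the connecting map
\[
\partial \colon H\uZ \smas S^{k\rho} \to \Sigma H\uZ \smas {\C P^{k-1}_\tau}_+ \simeq \bigvee_{i=0}^{k-1} H\uZ \smas S^{i\rho + 1}
\]
is null. Since $\partial$ is obtained by smashing a map of spectra with $H\uZ$, it is an $H\uZ$-module map, so its $i$-th component is classified by an element of $\pi^{C_2}_{(k-i)\rho - 1}(H\uZ)$, and it suffices to show $\pi^{C_2}_{j\rho - 1}(H\uZ) = 0$ for every $1 \le j \le k$.

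This vanishing is the main computation. When $j \ge 2$, both $|j\rho - 1| = 2j - 1$ and $|(j\rho - 1)^{C_2}| = j - 1$ are strictly positive, so the group vanishes by \eqref{grzero}. The subtle case is $j = 1$: here $\alpha = \sigma$ satisfies $|\alpha^{C_2}| = 0$, so neither \eqref{grzero} nor Theorem \ref{thm:BG Cn} applies directly (the latter fails because $|\alpha^{C_2}|$ is even). This single case is the only genuine obstacle in the argument, and is handled by noting $\pi^{C_2}_\sigma(H\uZ) = \uH^0_{C_2}(S^\sigma; \uZ)$ is the zero Mackey functor by Example \ref{exam S sigma }.

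Finally, the infinite case is obtained by passing to the sequential homotopy colimit: one has $\C P^\infty_\tau = \colim_n \C P^n_\tau$, and since $H\uZ \smas (-)$ commutes with homotopy colimits and the inclusions ${\C P^{n-1}_\tau}_+ \hookrightarrow {\C P^n_\tau}_+$ are compatible with the wedge decompositions produced by the induction, the infinite wedge decomposition follows.
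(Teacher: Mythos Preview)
Your proposal is correct and follows essentially the same argument as the paper: induct along the cofiber sequences ${\C P^{k-1}_\tau}_+ \to {\C P^k_\tau}_+ \to S^{k\rho}$, show the connecting map vanishes by computing each component in $\pi^{C_2}_{j\rho-1}(H\uZ)$, invoke \eqref{grzero} for $j\ge 2$, and use Example \ref{exam S sigma } for the delicate case $j=1$. The paper presents the same steps in cohomological rather than homotopical grading, but the content is identical.
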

	
	\begin{proof}
		The method of the proof is exactly same as \ref{thm main simple}. The main step involves the cofibre sequence 
		$$ {\C P_{\tau}^{n-1}}_+  \hookrightarrow  {\C P_{\tau}^{n}}_+  \ra S^{n+n\sigma }.$$
		Smashing with $H\uZ$ gives
		$$ H\uZ\smas{\C P_{\tau}^{n-1}}_+  \ra H\uZ\smas{\C P_{\tau}^{n}}_+  \ra H\uZ\smas S^{n+n\sigma }.$$
		The connecting homomorphism goes from 
		$$H\uZ\smas S^{n+n\sigma}\ra \bigvee_{i=0}^{n-1}H\uZ\smas S^{i+i\sigma +1}.$$  
		as $ H\uZ \smas {\C P_{\tau}^{n-1}}_+  \simeq \bigvee_{i=0}^{n-1}H\uZ\smas S^{n+n \sigma }$, by the inductive hypothesis. For each $0\leq i\leq n-1$, up to homotopy, the map $H\uZ\smas S^{n+n\sigma}\ra H\uZ\smas S^{i+i\sigma +1}$ belongs to $\pi_{0}(S^{i+1-n+(i-n)\sigma}\smas H\uZ)\cong H^{{i+1-n+(i-n)\sigma}}_G(S^0)$, which is $0$ when $i<n-1$ as all the fixed-point dimensions are negative.
		 At $i=n-1$, we get the Mackey functor $\underline{\pi}_{0}(S^{-\sigma}\smas H\uZ) = \underline{H}_{C_{2}}^{-\sigma}(S^0;\uZ)$, which is zero by Example \ref{exam S sigma }.
		 Hence the Theorem follows.
	\end{proof}
	\end{mysubsection}	
	
	\begin{mysubsection}{Quaternionic projective spaces}
		As in the complex case, the quaternionic projective spaces may be equipped with a cell structure that turn out to be useful from the perspective of homology decompositions \cite{Chonoles}. The quaternionic $C_n$-representation $\psi^r$ is given as multiplication by $e^{\frac{2\pi ir}{n}}$ on $\mathbb{H}$.  As a complex $C_n$-representation, $\psi^{r} \cong \lambda^{r} + \lambda^{-r}$. The equivariant projective space $P_{\mathbb{H}}(V)$ for a quaternionic representation $V$, is the set of lines in $V$, that is, $P_{\mathbb{H}}(V)= V \setminus \{0\} / \sim$ where $v \sim hv$ $ \forall~ v\in V\setminus \{0\}$, and $ \forall~ h \in \mathbb{H} \setminus \{0\}$. Define $\rho_\Hyp = \Hyp\otimes_\C \rho_G$, and note that as $\Hyp$-representations, 
\[\rho_\Hyp= \sum_{i=0}^{n-1} \psi^i.\]
We write $V_{k}= \sum_{i=0}^{k-1}\psi^i$ and $W_k=\lambda^{-k}\otimes_\C V_k$. We recall from \cite{Chonoles} that $P_\Hyp(m\rho_\Hyp)$ is a $G$-cell complex with cells of the form $D(W_k)$ for $k \leq mn-1$. 
	\end{mysubsection}
	
	\begin{thm}\label{quatadd}
Let $G =C_n$. We have the splitting 
$$H\uZ \smas P_{\mathbb H}(m\rho_\Hyp)_+ \simeq \bigvee_{i=0}^{mn-1}H\uZ\smas S^{W_i}.$$
Passing to the homotopy colimit, we obtain 
 $$H\uZ\smas B_GS^3_+ \simeq H\uZ \smas P_{\mathbb H}(\UU_\Hyp)_+ \simeq \bigvee_{i=0}^{\infty}H\uZ\smas S^{W_i}.$$
	\end{thm}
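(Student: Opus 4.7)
The plan is to mimic the inductive argument used for Theorem \ref{thm main simple}. The cell structure of $P_\Hyp(m\rho_\Hyp)$ with cells $D(W_k)$ gives rise to the cofibre sequences
\[ P_\Hyp(V_k)_+ \to P_\Hyp(V_{k+1})_+ \to S^{W_k}, \]
and I would proceed by induction on $k$ with the claim $H\uZ \smas P_\Hyp(V_k)_+ \simeq \bigvee_{i=0}^{k-1} H\uZ \smas S^{W_i}$. The base case $k=1$ is immediate since $P_\Hyp(V_1) = \pt$ and $W_0 = 0$.

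For the inductive step, smash the cofibre sequence with $H\uZ$; the splitting for $P_\Hyp(V_{k+1})_+$ follows once the connecting map $H\uZ \smas S^{W_k} \to \Sigma H\uZ \smas P_\Hyp(V_k)_+$ is shown to be null. Using the inductive hypothesis, each component of this map lies in $\pi^G_{W_k - W_i - 1}(H\uZ)$ for some $0 \le i < k$, and I would verify vanishing via Theorem \ref{thm:BG Cn} with $\alpha = W_k - W_i - 1$. The parity hypothesis is automatic, since $W_k$ and $W_i$ are complex representations and so $|W_k^H|$ and $|W_i^H|$ are both even. The second hypothesis reduces to the strictly stronger monotonicity statement: $|W_k^H| \ge |W_i^H|$ for every subgroup $H$ of $G$ and every $i < k$.

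Verifying this monotonicity is the main technical obstacle, and where the quaternionic argument departs from the complex one: in Theorem \ref{thm main simple} the analogous virtual representation $\phi_{k+1}-\phi_i$ happens to be a genuine representation, but here $W_k - W_i$ need not be. Using $W_k = \lambda^{-k} \otimes_\C V_k$, I would identify $\dim_\C W_k^{C_d}$ with the multiplicity of the character $\lambda^k|_{C_d}$ in $V_k|_{C_d}$; combined with the decomposition $V_k = \sum_{j=0}^{k-1}(\lambda^j + \lambda^{-j})$, this reduces the problem to counting the indices $j \in [0, k)$ satisfying $j \equiv \pm k \pmod d$. Writing $k = qd + r$ with $0 \le r < d$, a short case analysis yields the closed form
\[ \dim_\C W_k^{C_d} = 2\left\lfloor k/d \right\rfloor + \mathbf{1}\{r > d/2\}, \]
and tracking the effect of incrementing $k$ on the right-hand side confirms that the expression is non-decreasing, completing the verification. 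For the infinite case, $B_G S^3 \simeq P_\Hyp(\UU_\Hyp) \simeq \varinjlim_m P_\Hyp(m\rho_\Hyp)$, and the wedge splittings obtained at each finite stage are compatible with the filtration inclusions $P_\Hyp(V_k)_+ \hookrightarrow P_\Hyp(V_{k+1})_+$, so they pass to the homotopy colimit and give the stated decomposition of $H\uZ \smas B_G S^3_+$.
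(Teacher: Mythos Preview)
Your proof is correct and follows essentially the same route as the paper: both induct on $k$, reduce to the vanishing input of Theorem \ref{thm:BG Cn} via the monotonicity $|W_k^H| \ge |W_i^H|$, and verify that monotonicity by an explicit fixed-point count. Your closed form $\dim_\C W_k^{C_d} = 2\lfloor k/d\rfloor + \mathbf{1}\{r > d/2\}$ is a repackaging of the paper's formula $\lfloor (2k-1)/d\rfloor + \mathbf{1}\{d\mid k\}$ (they agree case by case), so the only difference is presentational.
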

	
	\begin{proof}
		We proceed as in Theorem \ref{thm main simple} by showing via induction on $k$ that 
		$$H\uZ\smas P(V_{k})_+\simeq \bigvee_{i=0}^{k-1}H\uZ\smas S^{W_{i}}.$$
		Along the way we are required to check $|W_{k}^{H} - W_{i}^{H} - 1|\geq -1 $ for $i<k$ which implies Theorem \ref{thm:BG Cn} applies to prove the result. Indeed, it is true as 
\[|W_{i}^{C_d}|=\begin{cases} \lfloor{\frac{2i-1}{d}}\rfloor +1 & \mbox{if } d\mid i \\ 
                        \lfloor{\frac{2i-1}{d}}\rfloor &\mbox{if } d \nmid i, \end{cases}\]
 is a non-decreasing function of $i$. Hence our result follows.  
	\end{proof}
	
	\begin{mysubsection}{Construction of a homology basis}\label{cons:alpha phi d}
We now define the classes $ \alpha_{\phi_\ell} $  which  serve as additive generators of $ H^{\bigstar}_{G}(P(\sU)) $.
		Let $ W_\ell$  and $ \phi_\ell $ be the representations
		\begin{myeq}\label{eq:phi d}
			W_\ell:=1_\C+\lambda+\cdots+\lambda^{\ell} \quad \textup{~and ~} \quad \phi_\ell:=\lambda^{-\ell}(1_\C+\lambda+\cdots+\lambda^{\ell-1}).
		\end{myeq}
		Consider the cofibre sequence 
		$$
		P(W_{\ell-1})_+\hookrightarrow P(W_\ell)_+ \xra{\chi}S^{\phi_\ell}.
		$$
		At $ \deg \phi_\ell $, the associated long exact sequence  is 
		$$
		\cdots \to \tH^{\phi_{\ell}-1}_{G}(P(W_{\ell-1})_+)\ra \tH^{\phi_\ell}_{G}(S^{\phi_\ell})\xra{\chi^*} \tH^{\phi_\ell}_{G}(P(W_\ell)_+)\ra  \tH^{\phi_\ell}_{G}(P(W_{ \ell-1})_+)\ra \cdots $$
		Note that, $ \uH^{\phi_{\ell}-1}_{G}(P(W_{\ell-1})_+)(G/e) \cong H^{2\ell-1}( P({\C^\ell}))\cong 0$.
		So, the  restriction of the map $ \chi^* $ at the orbit $ G/e $ is an isomorphism. Hence, the Mackey functor diagram says that the image of $ 1\in \tH^{\phi_\ell}_{G}(S^{\phi_\ell})\cong  H^{0}_{G}(\pt) \cong \Z  $ under the map $ \chi^* $ is nonzero. Define $ \alpha_{\phi_\ell}^{W_\ell} $  to be the element $ \chi^*(1) $. We often omit the superscript  and simply write  $ \alpha_{\phi_\ell} $.\par
		Now we lift  $\a_{\phi_\ell}$ by induction to get the  generator  $ \alpha_{\phi_\ell}^\mathcal{U} $ (or simply $ \alpha_{\phi_\ell} $) which belongs to  $ H^{\phi_\ell}_{G}(P(\sU)) $. For this we successively add representations (one at a time)  to $ W_\ell $ in a proper order to reach $ \mathcal{ U} $.  Let $ U' \subseteq \mathcal{U}$ be a representation containing $ W_\ell $. Assume that for  $U'$ the class $\alpha_{\phi_\ell}$ has been defined for $H^{\phi_\ell}_{G}(P(U'))$. Suppose $ U=U'+\lambda^j $.
		Consider the cofibre sequence $$P(U')_+ \xrightarrow{\theta}P(U)_+\rightarrow S^{\lam^{-j} U'}$$  and thus the long exact sequence
		\begin{myeq}\label{eq:les}
			\cdots\ra \tH^{\phi_\ell}_G(S^{\lam^{-j} U'})\ra \tH^{\phi_\ell}_G(P(U)_+)\xra{\theta^*}\tH^{\phi_\ell }_G(P(U')_+)\ra \tH^{\phi_\ell +1}_{G}(S^{\lam^{-j} U'})\to\cdots
		\end{myeq}
		By Proposition   \ref{thm:neg htpy gr} and \ref{thm:BG Cn}, the first and the fourth term in \ref{eq:les} is zero. So the map $ \theta^* $ is an isomorphism. Hence a unique lift of $\a_{\phi_\ell}$ exists in $H^{\phi_\ell}_{G}(P(U))$ along the map 
		$$\theta^*:\tH^{\phi_\ell}_{G}(P(U)_+)\ra \tH^{\phi_\ell}_{G}(P(U')_+).
		$$
		Since the restriction of the map $ \chi^* $ to the orbit $ G/e $  is an isomorphism, we get 
		\begin{myeq}\label{eq:res g/e}
			\res^{G}_e(\alpha_{\phi_\ell})=x^{\ell}.
		\end{myeq}
	\end{mysubsection}
	\begin{mysubsection}{Additive generators of $H^{\bigstar}_{G}(P(\mathcal{U}))$}
By Theorem \ref{thm main simple}, we may express the additive structure of the cohomology of $P(\sU)$  as
		\begin{myeq}\label{eq:addi decom}
			H^{\bigstar}_{G}(P(\sU)_{}) = \bigoplus ^{\infty}_{k=0}\bigoplus ^{n-1}_{i=0} H^{\bigstar - k\phi_n - \phi_i}_G(\pt),
		\end{myeq}
		where $ \phi_i=\lambda^{-i}(1_\C+\lambda+\cdots+\lambda^{i-1}) $ and $ \phi_0=0 $. The above construction  defines the  generators $ \alpha_{k\phi_n+\phi_i} \in H^{k\phi_n+\phi_i}_{G}(P(\sU)_{})$ which corresponds to the factor $ H^{\bigstar - k\phi_n - \phi_i}_G(\pt) $ in \eqref{eq:addi decom}.  Summarizing the above, we get
		\begin{prop}
			Additively, the classes $ \alpha_{k\phi_n+\phi_i}$ generates $  	H^{\bigstar}_{G}(P(\sU)_{})$   as a module over $H^{\bigstar}_{G}(\pt)$.
		\end{prop}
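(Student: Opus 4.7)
My plan is to read the proposition off directly from Theorem \ref{thm main simple} together with the construction of the classes in \S\ref{cons:alpha phi d}. Taking cohomology in the splitting $H\uZ\wedge P(\sU)_+ \simeq \bigvee_{i\geq 0} H\uZ\wedge S^{\phi_i}$ produces the additive decomposition \eqref{eq:addi decom}, in which each summand $H^{\bigstar - k\phi_n-\phi_j}_G(\pt)$ is a free rank-one $H^\bigstar_G(\pt)$-module whose generator sits in degree $k\phi_n+\phi_j$. Hence it suffices to check that the classes $\alpha_{k\phi_n+\phi_j}$ realise a choice of these generators summand by summand.

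I would set up an induction along the cellular filtration $P(W_\ell)_+\subset P(\sU)_+$. At the $\ell$-th stage, the cofibre sequence $P(W_{\ell-1})_+ \to P(W_\ell)_+ \xrightarrow{\chi} S^{\phi_\ell}$ used in \S\ref{cons:alpha phi d} yields a long exact sequence whose connecting map vanishes (this is precisely the computation carried out in the proof of Theorem \ref{thm main simple}, using Theorem \ref{thm:BG Cn}). So there is a split short exact sequence
\[
0 \to H^{\bigstar - \phi_\ell}_G(\pt) \xrightarrow{\chi^*} H^\bigstar_G(P(W_\ell)_+) \to H^\bigstar_G(P(W_{\ell-1})_+) \to 0.
\]
Since $\alpha_{\phi_\ell} = \chi^*(1)$ by construction, it generates the new summand as an $H^\bigstar_G(\pt)$-module. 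Combined with the inductive assumption that $\{\alpha_{\phi_i}\}_{i<\ell}$ generates $H^\bigstar_G(P(W_{\ell-1})_+)$, we obtain that $\{\alpha_{\phi_i}\}_{i\leq \ell}$ generates $H^\bigstar_G(P(W_\ell)_+)$. To pass from $P(W_\ell)$ through intermediate $U'\subset U \subset \sU$ to the whole universe, I would invoke the lifting procedure of \S\ref{cons:alpha phi d}: since $\theta^*$ is an isomorphism in the relevant degrees (Proposition \ref{thm:neg htpy gr} and Theorem \ref{thm:BG Cn}), each generator admits a unique lift, and these lifts assemble consistently into classes in $H^\bigstar_G(P(\sU))$.

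The only real subtlety is matching the generator of the $\ell$-th wedge summand in Theorem \ref{thm main simple} with the class $\alpha_{\phi_\ell} = \chi^*(1)$. This match is essentially tautological once one observes that the wedge splitting in Theorem \ref{thm main simple} is itself constructed inductively from these same cofibre sequences: the top summand added at stage $\ell$ is precisely the image of $\chi^*$, and its preferred generator is $\chi^*(1) = \alpha_{\phi_\ell}$. With this identification in hand, the module-generating property transfers from the finite stages $P(W_\ell)$ to $P(\sU)$ summand by summand, and the proposition follows.
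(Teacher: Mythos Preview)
Your proposal is correct and follows the same approach as the paper: the paper simply states ``Summarizing the above, we get'' before the proposition, relying on the additive decomposition \eqref{eq:addi decom} together with the construction in \S\ref{cons:alpha phi d} that identifies each $\alpha_{k\phi_n+\phi_i}$ with the generator of the corresponding summand. Your inductive argument along the cellular filtration, with the split short exact sequences and the lifting via $\theta^*$, is exactly the content the paper leaves implicit, so you have written out in full what the authors summarized.
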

	\end{mysubsection}
	
	\section{Application for cohomology operations} \label{sec cohop}
	Let $ G=C_p $ ($ p $ not necessarily odd).  Recall that $ \mathcal{A}_p $ is the mod $ p $ Steenrod algebra. We consider 
	$$
	\mathcal{A}_{G}^n= \{H\Zp, \Sigma^n H\Zp\}^G, 
	$$
	for $ n\in \Z $, and the map
	$$ \Omega:  \mathcal{A}_{G}^*\to \mathcal{ A}_p, $$
	as its restriction to the identity subgroup.
	We demonstrate that the additive decomposition of \S \ref{sec:additive} recovers the following result of Caruso \cite{Caruso}. 
	\begin{thm}\label{thm:coh op}
		Let $ \theta $ be a  degree $ r $ ($ r $ is even) cohomology operation  not involving the B\"{o}ckstein $ \beta $. For such $ \theta $, there does not exist an equivariant cohomology operation 
		\[
		\tilde{\theta}: H\Zp\to \Sigma^{r}H\Zp
		\] 
		such that $ \Omega(\tilde{\theta})=\theta $.
	\end{thm}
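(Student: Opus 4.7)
The plan is to assume a lift $\tilde\theta$ exists, apply it to a carefully chosen class in $H^\bigstar_G(P(\mathcal{U}); \underline{\mathbb{Z}/p})$, and exploit the additive splitting from Theorem \ref{thm main simple} together with the computation of $\pi^G_\bigstar H\underline{\mathbb{Z}/p}$ in \eqref{cp-comp zp cof}/\eqref{cp-comp z2 cof} to force a vanishing incompatible with $\Omega(\tilde\theta)=\theta$. As a preliminary observation on the non-equivariant side, $\theta$ acts on $H^\ast(\mathbb{C}P^\infty;\mathbb{F}_p)=\mathbb{F}_p[x]$ (with $|x|=2$) via the total Steenrod operation: since $\theta$ avoids the Bockstein, it lies in the subalgebra generated by $Sq^{2i}$ (for $p=2$) or by $P^i$ (for $p$ odd), so $\theta(x^n)=\lambda_n\, x^{n+r/2}$ for some scalars $\lambda_n\in\mathbb{F}_p$, and we may choose $n$ with $\lambda_n\ne 0$. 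The idea is then to use the class $\alpha_{\phi_n}\in H^{\phi_n}_G(P(\mathcal{U});\underline{\mathbb{Z}/p})$ from \S\ref{cons:alpha phi d}, which restricts to $x^n$ under $\res^G_e$ by \eqref{eq:res g/e}.

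Next, I would apply $\tilde\theta$ and expand $\tilde\theta(\alpha_{\phi_n})\in H^{\phi_n+r}_G(P(\mathcal{U});\underline{\mathbb{Z}/p})$ in the additive basis \eqref{eq:addi decom}, writing
\[
\tilde\theta(\alpha_{\phi_n}) \;=\; \sum_{k\ge 0,\;0\le i<p}\, c_{k,i}\cdot \alpha_{k\phi_p+\phi_i}, \qquad c_{k,i}\in H^{\phi_n+r-k\phi_p-\phi_i}_G(\mathrm{pt};\underline{\mathbb{Z}/p}).
\]
Restricting to $G/e$ and equating with $\theta(x^n)$ yields $\lambda_n\, x^{n+r/2}=\sum_{k,i}\res^G_e(c_{k,i})\,x^{kp+i}$. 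The underlying dimension of $c_{k,i}$ is $2(n+r/2-kp-i)$, so $\res^G_e(c_{k,i})$ vanishes unless $kp+i=n+r/2$; this isolates a unique pair $(k_0,i_0)$ with $0\le i_0<p$, and forces $\lambda_n=\res^G_e(c_{k_0,i_0})$. The contradiction will therefore come from showing $\res^G_e(c_{k_0,i_0})=0$.

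The crucial step is to analyze the grading $\alpha:=\phi_n+r-k_0\phi_p-\phi_{i_0}$. Using $|\phi_m^G|=2\lfloor m/p\rfloor$, one obtains $|\alpha|=0$ and $|\alpha^G|=2\lfloor n/p\rfloor+r-2k_0$. When $|\alpha^G|>0$, the tables \eqref{cp-comp zp cof} and \eqref{cp-comp z2 cof} identify the Mackey functor $\upi^G_{-\alpha}H\underline{\mathbb{Z}/p}$ with the dual constant Mackey functor $\underline{\mathbb{Z}/p}^\ast$, in which the restriction $\res^G_e$ is multiplication by the index $[G:e]=p$, hence zero modulo $p$. Thus $\res^G_e(c_{k_0,i_0})=0$, giving $\lambda_n=0$, a contradiction. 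The main obstacle, and the technical heart of the argument, is to choose $n$ so that $\lambda_n\ne 0$ and $|\alpha^G|>0$ hold simultaneously; the cleanest case is when $n$ and $n+r/2$ sit in the same $p$-adic block, in which case $k_0=\lfloor n/p\rfloor$ and $|\alpha^G|=r>0$ automatically. For products involving more than one Steenrod factor, such an $n$ can always be exhibited by inspecting the binomial coefficients that enter the Cartan (resp.\ total-$P^i$) expansion of $\theta(x^n)$, and ruling out the residual cases completes the proof.
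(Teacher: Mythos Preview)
Your strategy has the right shape, but there are two genuine gaps that the paper's proof is specifically designed to avoid.

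First, the case $p=2$ cannot be handled with $P(\mathcal{U})$. Take $\theta=Sq^2$, so $r=2$. Then $\theta(x^n)=nx^{n+1}$ is nonzero only for $n$ odd, and for every odd $n$ one has $k_0=(n+1)/2$ and
\[
|\alpha^{C_2}| \;=\; 2\lfloor n/2\rfloor + 2 - 2k_0 \;=\; (n-1)+2-(n+1) \;=\; 0.
\]
With $|\alpha|=0$ and $|\alpha^{C_2}|=0$, equation \eqref{cp-comp z2 cof} gives the Mackey functor $\underline{\mathbb{Z}/2}$, in which $\res^{C_2}_e$ is the identity, not zero; so no contradiction arises. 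The paper flags exactly this obstruction after Example~\ref{ex:coh op} and resolves it by replacing $P(\mathcal{U})$ with $\C P^\infty_\tau$, whose splitting into copies of $S^{i\rho}$ has $|\rho^{C_2}|=1$ odd, forcing the target Mackey functor to be $\underline{\mathbb{Z}/2}^\ast$ (or $\langle\mathbb{Z}/2\rangle$, or $\langle\Lambda\rangle$).

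Second, even for $p$ odd your reduction to a single copy of $P(\mathcal{U})$ leans on the unproved assertion that some $n$ with $\theta(x^n)\neq 0$ exists. You wave at ``inspecting the binomial coefficients'' and ``ruling out residual cases,'' but faithfulness of the Bockstein-free subalgebra on $\mathbb{F}_p[x]$ (with $|x|=2$) is exactly the nontrivial input. The paper sidesteps this by passing to smash powers $P(\mathcal{U})^{\smas s}$ and applying $\theta$ to the external product $x_1\otimes\cdots\otimes x_s$; the nonvanishing $\theta(x_1\otimes\cdots\otimes x_s)\neq 0$ is then a standard excess argument \`a la Mosher--Tangora. The smash powers also make the fixed-point count cleaner: with $\alpha=s\lambda$ the condition $|\tilde\alpha|=0$ automatically forces $|\tilde\alpha^{C_p}|>0$, so only $\underline{\mathbb{Z}/p}^\ast$ and $\langle\mathbb{Z}/p\rangle$ can appear on the target side.
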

	Before going into the proof, let us look at  some examples. 
	\begin{exam}\label{ex:coh op}
		Let $ p $ be odd. We claim that there does not exist an equivariant cohomology operation
		\[
		\tilde{P^1}: H\Zp\to \Sigma^{2p-2}H\Zp
		\] 
		such that $ \Omega(\tilde{P^1}) =P^1$, where $ P^1 $ is the power operation.
		The existence  of  such a   $ \tilde{P^1} $ will lead to a map of Mackey functors 
		$$ \uH_G^\alpha(X;\Zp) \to \uH_G^{\alpha+2p-2}(X;\Zp) \quad \textup{~for~}  \alpha\in RO(G),
		$$ 
		which is natural in $ X $. In particular, let us take $ X=P(\mathcal{U}) $ and $ \alpha =\lambda $. We observe that 
		\begin{myeq}\label{diag:cohop}
			\xymatrix@R=0.5cm@C=0.5cm{\uH_G^\lambda(P(\mathcal{U});\Zp) \ar[d]^{\cong}\ar[rr]^-{\tilde{P^1}} 
				&&\uH_G^{\lambda+2p-2}(P(\mathcal{U});\Zp)\ar[d]^{\cong}
				\\ 
				\Zp & &\Zp^*.
			}
		\end{myeq}
		An explanation for this is as follows. For the group $ C_p $, the additive decomposition in Theorem \ref{thm main simple} tells us
		\begin{myeq}\label{eq:un4}
			P(\mathcal{U})\smas H\uZ\simeq \bigwedg_{k=0}^{\infty} \bigwedg_{i=0}^{p-1} S^{k\phi_p+i\lambda}\wedge H\uZ \qquad \textup{~such that~}(k,i)\ne (0,0).
		\end{myeq}
		As a result,
		\begin{equation*}
			\tH^{\bigstar}_{G}({P(\mathcal{U})};\Zp)\cong \bigoplus_{k=0}^{\infty}\bigoplus_{i=0}^{p-1} \tH_G^{\bigstar-k\phi_p-i \lambda}(S^0;\Zp),\qquad (k,i)\ne (0,0).
		\end{equation*}
		Let $ \alpha=\lambda+2p-2-k\phi_p-i\lambda $. So $ |\alpha|=2p-2kp-2i $ and $ |\alpha^{C_p}|=2p-2-2k$.  Applying \eqref{cp-comp zp cof}, we conclude that $\uH_G^{\lambda+2p-2}(P(\mathcal{U});\Zp)\cong \Zp^* $.\par
		As $ P^1(x)=x^p $ for a generator $ x\in H^2(\C P^\infty) $, the diagram \ref{diag:cohop} yields a commutative diagram  
		\begin{equation*}\label{diag:mfuncohop}
			\xymatrix@R=0.1cm{  \Z/p \ar@/_.5pc/[ddd]_{1}\ar[rr]^{ \tilde{P^1}} && \Z/p \ar@/_.5pc/[ddd]_{0}
				\\
				\\
				\\
				\Z/p\ar@/_.5pc/[uuu]_{  0  } \ar[rr]^{\cong} & & \Z/p \ar@/_.5pc/[uuu]_{  1 },
			}
		\end{equation*}
		which is a contradiction.
	\end{exam}
	The technique used in example \ref{ex:coh op} does not work when $ p=2 $. This is because the Mackey functor $ \Zt $ may now appear  in the right hand side of the diagram \ref{diag:cohop}, and so the contradiction drawn out by comparing the Mackey functor diagram fails. Below we argue differently to show that $ Sq^2 $ is not in the image of $ \Omega $.
	\begin{exam}\label{ex:cohop2coeff}
		Let $ p=2 $. There does not exist an equivariant cohomology operation
		\[
		\tilde{Sq^2}: H\Zt\to \Sigma^{2}H\Zt
		\] 
		such that $ \Omega(\tilde{Sq^2}) =Sq^2$. The existence of such will lead to a map of Mackey functors 
		$$ \uH_G^\alpha(X;\Zt) \to \uH_G^{\alpha+2}(X;\Zt) \quad \textup{~for~}  \alpha\in RO(G).		$$  
Let $ X=\C P^2_\tau $, the complex projective space with the  conjugation action. Taking $ \alpha= \rho_{}=1+\sigma $
		\begin{myeq}\label{diag:cohoptwo}
			\xymatrix@R=0.5cm@C=0.5cm{\uH_G^{\rho_{}}(\C P^2_\tau;\Zt) \ar[d]^{\cong}\ar[rr]^-{\tilde{Sq^2}} 
				&&\uH_G^{\rho+2}(\C P^2_\tau;\Zt)\ar[d]^{\cong}
				\\ 
				\Zt & &\langle  \Lambda \rangle.
			}
		\end{myeq} 
		To see this recall from Proposition \ref{prop:cpnconj} that
		\begin{equation*}
			{\C P^2_\tau}\smas H\Z\simeq \bigvee_{i=1}^{2} S^{i+i\sigma}\smas H\uZ.
		\end{equation*}
		Hence  \begin{equation*}
			\tH^{\bigstar}_{C_2}(\C P^2_\tau;\Zt)\cong \bigoplus_{i=1}^{2} \tH_{C_2}^{\bigstar-i-i\sigma}(S^0;\Zt).
		\end{equation*}
		Applying \eqref{cp-comp z2 cof}, we obtain the required Mackey functors in \eqref{diag:cohoptwo}. Since $ Sq^2(x)=x^2 $ for a generator $ x\in H^2(\C P^\infty) $,  the diagram \eqref{diag:cohoptwo}  yields the following  commutative diagram  
		\begin{equation*}
			\xymatrix@R=0.1cm{  \Z/2 \ar@/_.5pc/[ddd]_{1}\ar[rr]^{ \tilde{Sq^2}} && 0 \ar@/_.5pc/[ddd]_{0}
				\\
				\\
				\\
				\Z/2\ar@/_.5pc/[uuu]_{  0  } \ar[rr]^{\cong} & & \Z/2 \ar@/_.5pc/[uuu]_{  0 },
			}
		\end{equation*}
		which is a contradiction.
	\end{exam}
	\bigskip
	Now we demonstrate the result in general.  Let $ {P(\mathcal{U})}^{\smas r} $ be the smash product of $ r $-copies of $ P(\mathcal{U}) $. Equation \eqref{eq:un4} gives us
	\begin{align*}
		F({P(\mathcal{U})}^{\smas r},H\Zp)&\simeq F_{H\uZ-\textup{mod}}({P(\mathcal{U})}^{\smas r}\smas H\uZ,H\Zp)\\
		&\simeq \bigwedg_{k_j=0}^{\infty} \bigwedg_{i_j=0}^{p-1} S^{-(k_1\phi_p+i_1 \lambda+\cdots+k_r\phi_p+i_r \lambda)}\smas H\Zp 
	\end{align*}
	where $ j\in \{1,\cdots, r\} $ and  $ (k_j,i_j)\ne (0,0) $. The last equivalence comes from the fact that $ {P(\mathcal{U})}^{\smas r}\smas H\uZ $ is a wedge of suspensions of $ H\uZ $ with finitely many $ \Sigma^V H\uZ $ of a given dimension $ V $.
	Hence 
	\begin{equation*}
		\tH^{\bigstar}_{C_p}({P(\mathcal{U})}^{\smas r};\Zp)\cong \bigoplus_{k_j=0}^{\infty}\bigoplus_{i_j=0}^{p-1} \tH_G^{\bigstar-(k_1\phi_p+i_1 \lambda+\cdots+k_r\phi_p+i_r \lambda)}(S^0;\Zp)
	\end{equation*}
	where $ j\in \{1,\cdots, r\} $ and  $ (k_j,i_j)\ne (0,0) $. We are now ready to prove Theorem \ref{thm:coh op}.
	
	\medskip
	\textit{Proof of the Theorem \ref{thm:coh op} when $ p $ is odd.} Let $ \theta $ be a  cohomology operation of degree $ r $ for  $ r $  even, such that $ \theta $ does  not involve the B\"{o}ckstein. Note that this condition implies $(p-1)\mid r$. Let $s= \frac{r}{p-1}$. Consider the element $ x_1\otimes\cdots \otimes x_s\in \tH^{2s}({\C P^{\infty}}^{\smas s};\Z/p) $, where each $ x_i $ is a generator of $ \tH^2(\C P^{\infty};\Z/p) $. By an argument analogous to \cite[Ch. 3, Theorem 2]{MoTa68}, we obtain $ \theta(x_1\otimes\cdots \otimes x_s)\ne 0 $.  Now suppose we have a map  
	\[
	\tilde{\theta}: H\Zp\to \Sigma^{r}H\Zp
	\] 
	such that $ \Omega(\tilde{\theta})=\theta $. This will lead to a map of Mackey functors 
	$$ \tilde{\theta}: \uH_G^\alpha(X;\Zp) \to \uH_G^{\alpha+r}(X;\Zp).
	$$ 
	Let us take $ X={P(\mathcal{U})}^{\smas s}$  and $ \alpha =s\lambda $. We observe that 
	\begin{myeq}\label{diag:cohop2}
		\xymatrix@R=0.5cm@C=0.5cm{
			\uH_G^{s\lambda}({P(\mathcal{U})}^{\smas s};\Zp) \ar[d]^{\cong}\ar[rr]^-{\tilde{\theta}} 
			&&\uH_G^{s\lambda+r}({P(\mathcal{U})}^{\smas s};\Zp)\ar[rr]^-{proj}\ar[d]^{\cong}  &&  \Zp^*
			\\ 
			\Zp &&{\Zp^*}^{\oplus t}\oplus {\langle  \Z/p \rangle}^{\oplus\ell} &&
		}
	\end{myeq}
	for some integer $ t\ge 1 $ and $ \ell\ge 0 $. To see this, let $ \alpha=s\lambda-  k_1\phi_p-i_1 \lambda-\cdots-k_s\phi_p-i_s \lambda$ and $ \tilde{\alpha}= s\lambda+r-  \tilde{k}_1\phi_p-\tilde{i}_1 \lambda-\cdots-\tilde{k}_s\phi_p-\tilde{i}_s \lambda$. The condition $ (k_j,i_j)\ne (0,0) $ implies $ |\alpha|=2s-2p(k_1+\cdots+k_s)-2(i_1+\cdots+i_s)\le 0 $. If $ |\alpha|<0 $, then the Mackey functor is zero by \ref{cp-comp zp cof}.  So the left hand side of diagram \ref{diag:cohop2} turns out to be $ \Zp $. However the Mackey functor $ \Zp $ can not appear in the right hand side as the condition
	$ |\tilde{\alpha}| =0$ forces $ |\tilde{\alpha}^{C_p}| $ to be $ >0 $.  This is because 
	$$
	|\tilde{\alpha}|=2s+r-2p(\tilde{k}_1+\cdots+\tilde{k}_s)-2(\tilde{i}_1+\cdots+\tilde{i}_s) =0
	$$ implies some $ i_j\ne 0 $. Now if 
	$ |\tilde{\alpha}^{C_p}|=r-2(\tilde{k}_1+\cdots+\tilde{k}_r)\le 0 $ then $|\tilde{\alpha}|\le 2s+(2-2p)(\tilde{k}_1+\cdots+\tilde{k}_r)-2(\tilde{i}_1+\cdots+\tilde{i}_r)<0 $. So $ |\tilde{\alpha}^{C_p}| >0$. 
	Since $ \theta(x_1 \otimes \cdots \otimes x_s)\ne 0$, the map $ \Zp(G/e)\to \Zp^*(G/e) $ is an isomorphism. As in the example \ref{ex:coh op}, this gives a contradiction. \qed\par
	
	\smallskip
	
	As before $ p=2 $ case require a different argument which we detail below. 
	
	\medskip
	
	\textit{Proof of the Theorem \ref{thm:coh op} when $ p =2$.} Let $ \theta $ be a  cohomology operation of degree $ 2r $, such that $ \theta $ does  not involve the B\"{o}ckstein.  By an argument analogous to \cite[Ch. 3, Theorem 2]{MoTa68}, we obtain $ \theta(x_1\otimes\cdots \otimes x_r)\ne 0 $ where each $ x_i $ is a generator of $ \tH^2(\C P^{\infty};\Z/p) $ and $ x_1\otimes\cdots \otimes x_r\in \tH^{2r}({\C P^{\infty}}^{\smas r};\Z/p) $.  Now suppose we have a map  
	\[
	\tilde{\theta}: H\Zt\to \Sigma^{2r}H\Zt
	\] 
	such that $ \Omega(\tilde{\theta})=\theta $. This will lead to a map of Mackey functors 
	$$ 
	\tilde{\theta}: \uH_G^\alpha(X;\Zt) \to \uH_G^{\alpha+2r}(X;\Zt).
	$$ 
	Let us take $ X={\C P^\infty_\tau}^{\smas r}$  and $ \alpha =r\rho=r+r\sigma $. With the help of Proposition \ref{prop:cpnconj} and \eqref{cp-comp z2 cof} we derive that
	\begin{equation*}
		\xymatrix@R=0.5cm@C=0.5cm{
			\uH_G^{r\rho}({\C P^\infty_\tau}^{\smas r};\Zt) \ar[d]^{\cong}\ar[rr]^-{\tilde{\theta}} 
			&&\uH_G^{r\rho+2r}({\C P^\infty_\tau}^{\smas r};\Zt)\ar[rr]^-{proj}\ar[d]^{\cong}  &&  \Zt^*
			\\ 
			\Zt &&{\Zt^*}^{}\oplus {\langle  \Z/2 \rangle}^{\oplus\ell} &&
		}
	\end{equation*}
	for some integer  $ \ell\ge 0 $.
	Since $ \theta(x_1 \otimes \cdots \otimes x_r)\ne 0$, the map $ \Zt(G/e)\to \Zt^*(G/e) $ is an isomorphism. As in the examples, this gives a contradiction. \qed
	
	\section{Slice tower of $ P(V)_+\smas H\Z $} \label{slice}
	The slice filtration in the equivariant stable homotopy category was introduced by Hill, Hopkins and Ravenel in their proof of the Kervaire invariant   one problem \cite{HHR} (see also \cite{Hill-sliceprimer}). The associated slice tower is an equivariant analogue of the Postnikov tower.   We use the regular slice filtration on equivariant spectra  (see \cite{Ullman-slice}), which differs from the original formulation   by a shift of one \cite[Proposition 3.1]{Ullman-slice}.
	\begin{defn}
		Let $ \tau _{\ge n} $ denote the localizing subcategory of genuine $ G $-spectra which is generated by $ G $-spectra of the form $ G_+\smas_H S^{k\rho_H} $, where $ \rho_H $ is the  (real) regular representation of $ H $ and $k|H|\ge n $.
	\end{defn}
	Let $ E $ be a $ G $-spectrum. Then $ E $ is said to be \textit{slice $ n $-connective} (written as $ E\ge n $) if $ E\in \tau_{\ge n} $, and $ E $ is said to be \textit{slice $ n $-coconnective} (written as $ E<n $) if 
	\[
	[S^{k\rho_H+r}, E]^H=0
	\]
	for all subgroup $ H\le G $ such that $ k|H|\ge n $ and  for all $ r\ge 0 $. We say $ E $ is an \textit{$ n $-slice} if $ n\le E\le n $.\par

	In \cite{HillYarnall}, the authors provide an alternative criterion for something being slice connective using the geometric fixed point functor.
	\begin{thm}\cite[Theorem 3.2]{HillYarnall}\label{thm:HY} The representation sphere $ S^V $ is in $ \tau_{\ge n} $ if and only if for all $ H\subset G $,
		\[
		\dim V^H\ge n/|H|.
		\]
		We note the following result from \cite[Proposition 2.23]{Hill-sliceprimer}.
		\begin{prop}\label{prop:Hill-criterion}
			If $ X $ is in $ \tau_{\ge 0} $ and $ Y $ is in $ \tau_{\ge n} $, then $ X\smas Y $ is in $ \tau_{\ge n} $.
		\end{prop}		
	\end{thm}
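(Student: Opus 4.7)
The statement is a biconditional, and I will tackle the two directions separately.

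\emph{The forward direction} is the easier one. The plan is to apply the $H$-geometric fixed point functor $\Phi^H$ for each subgroup $H\subseteq G$. The key auxiliary lemma is that $\Phi^H$ sends $\tau_{\geq n}$ into the subcategory of $\lceil n/|H|\rceil$-connective non-equivariant spectra. Since $\Phi^H$ is exact and preserves arbitrary coproducts, it suffices to check this on the generators $G_+\smas_K S^{k\rho_K}$ with $k|K|\geq n$. A direct double-coset computation of $(G/K)^H$, combined with the identity $\dim(\rho_K)^L=|K|/|L|$ for $L\subseteq K$, gives
\[\Phi^H(G_+\smas_K S^{k\rho_K})\simeq\bigvee_{[g]\colon g^{-1}Hg\subseteq K}S^{k|K|/|H|},\]
each wedge summand being a sphere of dimension $k|K|/|H|\geq n/|H|$. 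Applied to $X=S^V$, for which $\Phi^H(S^V)=S^{V^H}$, the resulting connectivity bound forces $\dim V^H\geq \lceil n/|H|\rceil\geq n/|H|$.

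\emph{The backward direction} is harder. Assuming $\dim V^H\geq n/|H|$ for every $H\subseteq G$, I would argue $S^V\in\tau_{\geq n}$ by induction on $|G|$, combined with the multiplicative strengthening of Proposition \ref{prop:Hill-criterion}, namely $\tau_{\geq a}\smas\tau_{\geq b}\subseteq\tau_{\geq a+b}$. The base case $G=\{e\}$ reduces to the classical fact that an $(\dim V)$-sphere is $n$-connective. For the inductive step, the favorable case is when $V$ contains a summand isomorphic to $\rho_G$: writing $V=\rho_G\oplus V''$, the slice cell $S^{\rho_G}\in\tau_{\geq|G|}$ together with the matching inequalities on $V''$ (with $n$ replaced by $n-|G|$, using $\dim\rho_G^H=|G|/|H|$) lets one split the contribution and iterate. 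When $V$ contains no copy of $\rho_G$, the condition at $H=G$ forces $\dim V^G\cdot |G|\ge n$ to already fail to be tight; the tightest dimensional constraint is then realized at a proper subgroup $H\lneq G$, and one proceeds by restricting to $H$. By the induction hypothesis, $S^{V|_H}\in\tau^H_{\geq n}$; one then lifts this to a slice-cell decomposition over $G$ using the restriction/induction adjunctions and the fact that induced slice cells $G_+\smas_H S^{k\rho_H}$ with $k|H|\geq n$ sit in $\tau_{\geq n}$ by definition.

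The main obstacle is the last step of the backward direction: producing an actual slice-cell decomposition of $S^V$ in the ``no $\rho_G$'' case. The subtlety is that merely knowing $S^{V|_H}$ is slice $n$-connective at the $H$-level does not immediately exhibit $S^V$ as built from $G$-slice cells of the right dimension; one must carefully track the combinatorics of the isotropy subgroups appearing in the cellular filtration of $S^V$ and the way they contribute to each fixed-point dimension simultaneously. The cleanest route is likely to induct jointly on $(|G|,\dim V)$ and decompose $V$ along an irreducible subrepresentation adapted to the subgroup realizing the tightest constraint, applying multiplicativity of the filtration to each piece.
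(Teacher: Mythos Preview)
The paper does not prove this statement at all: Theorem~\ref{thm:HY} is quoted from \cite[Theorem~3.2]{HillYarnall} and Proposition~\ref{prop:Hill-criterion} from \cite[Proposition~2.23]{Hill-sliceprimer}, and both are used as black boxes in the proof of Theorem~\ref{thm:slice}. So there is no ``paper's own proof'' to compare your attempt against.

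That said, your forward direction is correct and is essentially the standard argument. Your backward direction, however, contains a genuine error. The claim that ``when $V$ contains no copy of $\rho_G$, the tightest dimensional constraint is realized at a proper subgroup $H\lneq G$'' is false. Take $G=C_2$ and $V=2\sigma$: there is no $\rho_{C_2}$ summand (since $V^{C_2}=0$), yet the constraints read $2\ge n$ at $H=e$ and $0\ge n/2$ at $H=C_2$, so the binding constraint is at $H=G$, not at a proper subgroup. More seriously, even granting your claim, the proposed lift does not work: knowing $S^{V|_H}\in\tau^H_{\ge n}$ for a proper $H$ gives no direct handle on $S^V\in\tau^G_{\ge n}$, since induction $G_+\smas_H(-)$ produces a different spectrum and restriction does not reflect slice connectivity upward.

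The actual argument in \cite{HillYarnall} is organized differently. They prove the stronger characterization that $X\in\tau_{\ge n}$ if and only if $\Phi^H X$ is $(\lceil n/|H|\rceil-1)$-connected for every $H\subseteq G$; the statement about $S^V$ is then an immediate corollary in both directions. The nontrivial implication (geometric fixed point connectivity implies slice connectivity) is established by isotropy separation and induction on $|G|$, using the cofiber sequence $E\mathcal{P}_+\to S^0\to\tilde{E}\mathcal{P}$ for the family of proper subgroups, rather than by peeling off copies of $\rho_G$ from a specific representation.
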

	There has been a large number of computations of slices for equivariant spectra. They are either carried out in the case of $ MU $ or its variants (\cite{HHR,HHR-realKthy,HillShiWangXu}) or for spectra of the form $ \Sigma^n H\uZ $ (\cite{HHR-slicess,Yarnall-slice,GuillouYarnall, GhoshS-slice,Slone}). We show that our cellular filtration actually yields the slice filtration for $ P(\mathcal{U})_+\smas H\uZ $ up to a suspension and in addition, the additive decomposition proves that the slice tower is degenerate in the sense that the maps possess sections.
	\begin{thm}\label{thm:slice}
		The slice towers of $ \Sigma^2 P(\mathcal{U})_+\smas H\uZ $ and $ \Sigma^2 P(n\rho)_+\smas H\uZ $ are degenerate and these spectra are a wedge of slices of the form $ S^V\smas H\uZ $.
	\end{thm}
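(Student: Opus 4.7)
The plan is to use the additive splitting of Theorem~\ref{thm main simple} to reduce the problem to showing that each individual wedge summand is a single slice. Once this is established, both conclusions follow at once: the spectra are wedges of slices by construction, and the slice filtration maps admit sections coming from the wedge projections, so the tower is degenerate.

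Concretely, Theorem~\ref{thm main simple} yields
$$
\Sigma^2 P(m\rho)_+ \smas H\uZ \simeq \bigvee_{i=0}^{mn-1} \Sigma^{2+\phi_i} H\uZ,
$$
and a parallel statement for $P(\mathcal{U})$, so it suffices to prove that each $\Sigma^{2+\phi_i} H\uZ$ is a $(2+2i)$-slice. For slice connectivity, Proposition~\ref{prop:Hill-criterion} combined with $H\uZ\in\tau_{\geq 0}$ reduces the problem to verifying $S^{2+\phi_i}\in\tau_{\geq 2+2i}$, and the geometric fixed-point criterion of Theorem~\ref{thm:HY} turns this into the inequality $|(2+\phi_i)^{C_d}|\geq (2+2i)/d$ for every divisor $d\mid n$. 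A short calculation gives $|(2+\phi_i)^{C_d}|=2+2\lfloor i/d\rfloor$, and the inequality becomes $i\bmod d\leq d-1$, which is automatic. The $P(\mathcal{U})$ version then follows by passing to the colimit.

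The main technical hurdle is slice coconnectivity, namely the vanishing
$$
[S^{k\rho_H+r},\,\Sigma^{2+\phi_i} H\uZ]^H \;=\; \upi^H_{\alpha}(H\uZ)(H/H) \;=\; 0
$$
for every subgroup $H\leq G$, every $r\geq 0$, and every $k$ with $k|H|\geq 2i+3$, where $\alpha = k\rho_H + r - 2 - \phi_i|_H \in RO(H)$. The underlying dimension satisfies $|\alpha|\geq 1$ by hypothesis, and for $K\leq H$ of order $e$ one has $|\alpha^K| = k[H:K] + r - 2 - 2\lfloor i/e\rfloor$. The plan is to split into cases based on the parity of the fixed-point dimensions and apply the vanishing machinery of Section~\ref{eqicyc}: when all $|\alpha^K|$ are odd, the floor estimates ensure that the monotonicity hypothesis of Theorem~\ref{thm:BG Cn} is met and the theorem yields the vanishing; when all $|\alpha^K|$ are even and non-negative for $K\neq e$, Proposition~\ref{mfun0} applies. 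The subtlest situation is the minimum $k|H|=2i+3$ with $r=0$, where some $|\alpha^K|$ may reach the boundary values $-1$ or $0$; here Theorem~\ref{thm:BG Cn} still applies since the required implication $|\alpha^K|>-1\Rightarrow|\alpha^{K'}|\geq -1$ for $K\subseteq K'$ is forced by the monotonicity of the floor in $e$. This case-by-case verification is where the bulk of the work lies; once it is completed, each summand is identified as a slice of the predicted degree, and the theorem follows.
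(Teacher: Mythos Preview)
Your overall strategy and the slice-connectivity argument match the paper exactly. The gap is in coconnectivity: your dichotomy ``all $|\alpha^K|$ odd'' (apply Theorem~\ref{thm:BG Cn}) versus ``all $|\alpha^K|$ even and $\geq 0$ for $K\neq e$'' (apply Proposition~\ref{mfun0}) is not exhaustive. The parity of
\[
|\alpha^{C_e}|\;=\;k\,|H|/e+r-2-2\lfloor i/e\rfloor
\]
is that of $k\,|H|/e+r$, and this changes with $e$ as soon as $|H|$ is even and $k$ is odd. For instance, with $H=C_4$, $i=0$, $k=1$, $r=0$ one finds $\alpha=\rho_{C_4}-2=\sigma+\lambda-1$, which has $|\alpha|=2$ even but $|\alpha^{C_2}|=|\alpha^{C_4}|=-1$ odd; neither of the two results you invoke applies, and your outline gives no mechanism for such $\alpha$. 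This mixed-parity situation is not a fringe case---it arises whenever $k-2\lfloor i/|H|\rfloor$ is odd and $|H|$ is even.

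The paper handles coconnectivity differently: it works in cohomological grading, writes $\ell=qm+s$ and $k=2q+j$, and uses the identity $2\rho_H=1_\C+\lambda+\cdots+\lambda^{m-1}$ to rewrite $\res_H(\phi_\ell)+2-r-k\rho_H$ explicitly, after which it appeals to Proposition~\ref{thm:neg htpy gr} or to the vanishing when all fixed-point dimensions are negative, bypassing parity considerations entirely. If you prefer to keep your own framework, the clean repair is to note that one always has $|\alpha|\geq 1$ and $|\alpha^K|\geq -1$ for every $K\leq H$ (this is exactly the floor estimate you already observed), and then the Anderson-duality computation in the proof of Proposition~\ref{mfun0} applies verbatim---its evenness hypothesis is not actually used there---to give $\upi_\alpha^H(H\uZ)=0$ uniformly.
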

	\begin{proof}
		Theorem \ref{thm main simple} allows us to write  \[ P(\mathcal{U})_+\smas H\uZ \simeq \bigwedg_{\ell=0}^\infty H\uZ\smas S^{\phi_\ell},\]
		where  $ \phi_\ell=\lam^{-l}(\lambda+\cdots+\lambda^{\ell-1}) $ and $ \phi_0=0 $. We claim that each of $ \Sigma^2S^{\phi_\ell}\smas H\uZ $ is a $ 2\ell+2 $-slice. Let $ H=C_m $, a subgroup of $ G $. We verify
		\begin{equation*}\label{eq:umn2}
			[S^{k\rho_H+r}, S^{res_H(\phi_\ell)+2}\smas H\uZ]^H\cong
			H^{\alpha}_H(S^0;\uZ)=0
		\end{equation*}
		for  $ k|H|>2\ell+2 $ and  $ \alpha= \res_H(\phi_\ell)+2-r-k\rho_H$. We may write $ \ell  =qm +s$, where $ s<m $, and thus $ \res_H(\phi_\ell)=2q\rho_H+\lambda +\cdots +\lambda^s$. Since $ km>2\ell+2 = 2qm+2s+2$, we get $ k>2q $. Let $ k=2q+j $, so that
		$$\alpha= -r-(\lambda^{s+1}+\cdots+\lambda^{m-1})-(j-1)\rho_H.$$
		Therefore either by  Proposition \ref{thm:neg htpy gr}, or using  the fact that all the fixed point dimensions of $ \alpha $ are negative, we have  $ H^{\alpha}_H(S^0;\uZ)=0 $.\par
		To show $ S^{\phi_\ell+2}\smas H\uZ \ge 2\ell+2 $, using Proposition \ref{prop:Hill-criterion}, it is enough to prove $ S^{\phi_\ell+2}\in \tau_{\ge 2\ell+2}$ as $H\uZ $ is a $ 0 $-slice \cite[Proposition 4.50]{HHR}. Appealing to Theorem \ref{thm:HY},  $ S^{\phi_\ell+2}\in \tau_{\ge 2\ell+2}$ as
		\[
		\frac{2\ell+2}{m}=2q+\frac{2s+2}{m}\le 2q+2=\dim( S^{\phi_\ell+2})^H \quad \textup{~for all~} H\subset G.
		\]
	\end{proof}
	\begin{thm}\label{thm:slicequa}
		The slice towers of $ \Sigma^4 P(\mathcal{U}_\bH)_+\smas H\uZ $ and $ \Sigma^4 P(n\rho_\bH)_+\smas H\uZ $ are degenerate and these spectra are a wedge of slices of the form $ S^V\smas H\uZ $.
	\end{thm}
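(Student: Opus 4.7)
The plan is to mimic the proof of Theorem \ref{thm:slice} using the quaternionic additive decomposition of Theorem \ref{quatadd}. That theorem yields
\[
P_\Hyp(\mathcal{U}_\Hyp)_+ \smas H\uZ \simeq \bigvee_{k=0}^{\infty} S^{W_k} \smas H\uZ,
\]
and an analogous finite wedge for $P_\Hyp(m\rho_\Hyp)_+$. Since $W_k$ has real dimension $4k$, it suffices to show that each summand $\Sigma^4 S^{W_k} \smas H\uZ$ is a single $(4k+4)$-slice; the full slice tower then automatically degenerates, and the spectrum splits as a wedge of slices.

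For slice $(4k+4)$-connectivity, I would combine Proposition \ref{prop:Hill-criterion} with the fact (used in the proof of Theorem \ref{thm:slice}) that $H\uZ$ is a $0$-slice, reducing to $S^{W_k+4} \in \tau_{\geq 4k+4}$. By Theorem \ref{thm:HY}, this amounts to the inequality $\dim(S^{W_k+4})^{C_m} \geq (4k+4)/m$ for each subgroup $C_m \subset G$. The explicit formula for $|W_k^{C_m}|$ from the proof of Theorem \ref{quatadd} reduces this to a short case check on the residue $k \bmod m$, with the extra ``$+4$'' providing the required slack.

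For slice $(4k+4)$-coconnectivity, we must verify
\[
[S^{j\rho_H + r},\, S^{W_k+4} \smas H\uZ]^{H} \cong H^\alpha_H(S^0; \uZ) = 0
\]
for every subgroup $H = C_m \subset G$, every $r \geq 0$, and every $j$ with $jm > 4k+4$, where $\alpha = \res_H(W_k) + 4 - r - j\rho_H$. Writing $k = qm + s$ with $0 \leq s < m$, I would decompose $\res_H(W_k)$ into copies of $\rho_H$ plus leftover non-trivial characters, and then rearrange $\alpha$ into a form of the type $-(\text{non-negative combination of complex characters})$, mirroring the calculation in the proof of Theorem \ref{thm:slice}. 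The hypothesis $jm > 4k+4$ makes the coefficient of $\rho_H$ sufficiently negative for the rewriting to succeed, and vanishing of $H^\alpha_H(S^0;\uZ)$ then follows from Proposition \ref{thm:neg htpy gr}, from the observation that all fixed-point dimensions of $\alpha$ are negative, or, in residual boundary cases, from Theorem \ref{thm:BG Cn}.

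The main obstacle is the mild asymmetry of $W_k$ relative to $\phi_\ell$: writing $W_k = \phi_{2k-1} + \lambda^{-k}$, the doubled character $\lambda^{-k}$ contributes an extra term in the restriction that forces a two-case split depending on whether $m \mid k$. The larger suspension $\Sigma^4$ (versus $\Sigma^2$ in the complex case), reflecting the real dimension of the scalars $\mathbb{H}$, provides exactly enough headroom to handle both cases uniformly, so the argument parallels the complex one step by step.
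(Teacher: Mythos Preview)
Your proposal is correct and follows the same template as the paper: invoke the quaternionic wedge decomposition, then show each suspended summand is a single slice by combining the Hill--Yarnall fixed-point criterion for connectivity with a direct vanishing argument for coconnectivity, exactly as in Theorem~\ref{thm:slice}.

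The one difference worth noting is that the paper does not work with $W_k$ at all. Instead it passes to the representation $\xi_\ell=\psi^1+\cdots+\psi^\ell$ and records the clean restriction $\res_{C_m}(\xi_\ell)=4q\rho_{C_m}+\psi^1+\cdots+\psi^s$ for $\ell=qm+s$; with this formula the remainder is literally a rerun of the complex argument, and the two-case split on $m\mid k$ that you anticipate never arises. Your route through $W_k$ is more faithful to what Theorem~\ref{quatadd} actually produces and forces you to confront the asymmetry $W_k=\phi_{2k-1}+\lambda^{-k}$ directly; the paper's route trades that honesty for a tidier calculation. Either way the verification goes through, and your observation that the $\Sigma^4$ supplies exactly the slack needed is the right intuition in both versions.
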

	\begin{proof}
		Theorem \ref{quatadd} allows us to write  \[ P(\mathcal{U}_\bH)_+\smas H\uZ \simeq \bigwedg_{\ell=0}^\infty H\uZ\smas S^{\xi_\ell},\]
		where  $ \xi_\ell=\psi^1+\cdots+\psi^{\ell} $ and $ \xi_0=0 $. 
		Let $ H=C_m $, a subgroup of $ G $. We may write $ \ell  =qm +s$, where $ s<m $. Consequently, $ \res_H(\xi_\ell)=4q\rho_H+\lambda +\cdots +\lambda^s$. Proceeding  as in the case of Theorem \ref{thm:slice}, we  deduce that each of $ \Sigma^4S^{\xi_\ell}\smas H\uZ $ is a $ 4\ell+4 $-slice.
	\end{proof}
	
	\section{Ring Structure of $ B_{G}S^1$}\label{cohring}
	The objective of this section is to compute the equivariant cohomology ring $ H^\bigstar_G(B_GS^1) $ for $ G=C_{p^m} $, $ p $ prime and $ m\ge 1 $. We use the identification $ B_GS^1\simeq P(\mathcal{U}) $ where $ \mathcal{ U} $ is a complete $ G $-universe. The additive decomposition of \S \ref{sec:additive} already provides a basis for the cohomology as a module over $ H^\bigstar_G(\pt)=\pi_{-\bigstar}^G(H\uZ) $. 
	\begin{mysubsection}{Multiplicative generators of $H^{\bigstar}_{G}(P(\mathcal{U}))$} The fixed point space $ P(\mathcal{U})^G $ is a disjoint union of $ n $-copies of $ \C P^\infty $ which are included in $ P(\mathcal{U}) $ as $ P(\infty \lambda^i)=\textup{colim}_k P(k\lambda^i) $ for $ 0\le i\le n-1 $.
		Let $q_i :P(\infty \lambda^i) \rightarrow P(\mathcal{U})$ denote the inclusion. 
		In particular, we have 
		\begin{myeq}\label{eq:q0}
			q_0 :P(\mathbb{C}^{\infty})\rightarrow P(\sU).
		\end{myeq}
		For the trivial $ G $-action on $\mathbb{C}P^{\infty}$, we get
		$$H^{\bigstar}_{G}(\mathbb{C}P^{\infty})= H^{*}(\mathbb{C}P^{\infty})\otimes H^{\bigstar}_{G}(\pt) =H^{\bigstar}_{G}(\pt)[x],$$
		where $x$ is the multiplicative  generator of $H^{2}(\C P^{\infty};\Z)$.

		Recall	from \S \ref{cons:alpha phi d} that the classes $ \alpha_{k\phi_n+\phi_i} $ form an additive generating set for $H^{\bigstar}_{G}(P(\mathcal{U}))$. The following notes the multiplicative generating set.
		\begin{prop}\label{prop:multi gene}
			The collection   $\{\alpha_{{\phi_d} }\mid d \mbox{ divides }n\}$  generates $H^{\bigstar}_{G}(P(\mathcal{U}))$ as an algebra over $H^{\bigstar}_{G}(\pt)$.
		\end{prop}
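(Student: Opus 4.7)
I would prove this by induction on the cellular index $\ell \ge 0$ (where we identify $\phi_{kn+i} = k\phi_n + \phi_i$ since $\lambda^n = 1_\C$), showing that each additive generator $\alpha_{\phi_\ell}$ from \eqref{eq:addi decom} lies in the $H^\bigstar_G(\pt)$-subalgebra $\mathscr{A}$ generated by $\{\alpha_{\phi_d} : d \mid n\}$. The base case $\ell = 0$ is trivial ($\alpha_{\phi_0} = 1$), and for $\ell \mid n$ the class is already a generator.

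For the inductive step at $\ell$ with $\ell \nmid n$ or $\ell > n$, I use the restriction map $q_0^*$ from \eqref{eq:q0}, which sends $\alpha_{\phi_m} \mapsto x^m$ by \eqref{eq:res g/e}. For a chosen decomposition $\ell = d + (\ell - d)$ with $d \mid n$, the product $\alpha_{\phi_d}\,\alpha_{\phi_{\ell-d}}$ lies in $\mathscr{A}$ (by the inductive hypothesis applied to $\alpha_{\phi_{\ell-d}}$), restricts to $x^\ell$, and hence has additive expansion
\begin{equation*}
\alpha_{\phi_d}\,\alpha_{\phi_{\ell-d}} \;=\; u_d \cdot \alpha_{\phi_\ell} \;+\; \sum_{\ell' < \ell} c_{\ell'}\, \alpha_{\phi_{\ell'}},
\end{equation*}
where $u_d \in H^{\phi_d + \phi_{\ell-d} - \phi_\ell}_G(\pt)$ satisfies $\res^G_e u_d = 1$. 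By induction the lower terms lie in $\mathscr{A}$, so $u_d \cdot \alpha_{\phi_\ell} \in \mathscr{A}$; the crux is choosing $d$ so that $u_d$ is a unit. For $\ell > n$, I would take $d = n$, giving $\phi_n + \phi_{\ell-n} - \phi_\ell = 0$ and hence $u_n = 1$ automatically. For $0 < \ell < n$ with $\ell \nmid n$, I write $\ell = p^a b$ with $\gcd(b,p) = 1$ and $b > 1$, and take $d = p^a$ (which divides $n = p^m$ since $a < m$); a direct computation gives
\begin{equation*}
\phi_d + \phi_{\ell-d} - \phi_\ell \;=\; \sum_{j=1}^{p^a}\bigl(\lambda^{-j} - \lambda^{-j-p^a(b-1)}\bigr),
\end{equation*}
so $u_d$ factors as a product of classes of the form $u_{\lambda^{-j} - \lambda^{-j-p^a(b-1)}}$ from Definition~\ref{def:new u-class}.

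The main obstacle is checking that each such factor is invertible. Writing $j = p^e j'$ with $\gcd(j',p) = 1$, the exponent ratio $s = (j+p^a(b-1))/j$ reduces mod $p$ to $1$ if $a > e$, and to $b(j')^{-1}$ if $a = e$ (forcing $j = p^a$, so $j' = 1$); in both cases $\gcd(s,p)=1$ using $\gcd(b,p) = 1$. Proposition~\ref{prop:smash-rep} then makes each factor (and hence $u_d$) a unit, so $\alpha_{\phi_\ell} = u_d^{-1}\bigl(\alpha_{\phi_d}\alpha_{\phi_{\ell-d}} - \sum c_{\ell'}\alpha_{\phi_{\ell'}}\bigr) \in \mathscr{A}$, completing the induction. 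If the $p$-adic bookkeeping proves more delicate than sketched, I would fall back on combining products from several decompositions of $\ell$ and invoking Theorem~\ref{Zhtpy} and Proposition~\ref{prop:new u cls rel with a cls} to show that the ideal of $H^\bigstar_G(\pt)$ spanned by the resulting $u_d$'s already contains a unit, which suffices to extract $\alpha_{\phi_\ell}$.
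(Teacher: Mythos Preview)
Your strategy is the same as the paper's: induct on $\ell$, expand a well-chosen monomial in the $\alpha_{\phi_d}$'s along the additive basis \eqref{eq:addi decom}, and show the coefficient of $\alpha_{\phi_\ell}$ is a unit by restricting to $G/e$. The paper uses the full $p$-adic monomial $\alpha_{\phi_n}^k\prod_j \alpha_{\phi_{p^j}}^{r_j}$ (where $i=\sum r_jp^j$), whereas you use the binary product $\alpha_{\phi_d}\alpha_{\phi_{\ell-d}}$; your version makes the invertibility of the top coefficient more explicit, and your $p$-adic check that each $\lambda^{-j}$ and $\lambda^{-j-p^a(b-1)}$ have the same $p$-adic valuation (hence differ by a unit via Proposition~\ref{prop:smash-rep}) is correct.

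Three small corrections. First, you conflate $q_0^*$ with $\res^G_e$: equation \eqref{eq:res g/e} concerns $\res^G_e(\alpha_{\phi_\ell})=x^\ell$, while $q_0^*(\alpha_{\phi_1})=u_\lambda x$ by Proposition~\ref{prop:q0 of alp lamb}; your argument only needs $\res^G_e$. Second, your citation of Definition~\ref{def:new u-class} is misplaced: that defines $u_{\lambda^i-\lambda^j}$ for $i\mid j$, which is generally \emph{not} invertible (e.g.\ $u_{\lambda^{p^{r-1}}-\lambda^{p^r}}$ in the relations $\rho_r$); the invertible classes you need are the $u_{\lambda^{si}-\lambda^i}$ with $\gcd(s,n)=1$ from the discussion following Proposition~\ref{prop:smash-rep}. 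Third, you should say why no $\alpha_{\phi_{\ell'}}$ with $\ell'>\ell$ appears in your expansion; this is because the coefficient would lie in a degree with all fixed-point dimensions negative, hence vanishes by \eqref{grzero} (the paper makes this explicit).
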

		\begin{proof}
			We show by induction that each $\alpha_{k\phi_n+\phi_i}  $ may be expressed as a sum of monomials on the $ \alpha_{\phi_d} $. Suppose we know that all generators with degree lower than  $ \alpha_{k\phi_n+\phi_i} $ can be written in terms of $  \alpha_{{\phi_d} }$'s.
			Let $ i=\sum_{j=0}^\ell r_jp^j $ where $ 0\le r_j\le p-1 $. The class $ \alpha_{\phi_n}^k  \alpha_{\phi_{p^\ell}}^{r_\ell}\cdots \alpha_{\phi_1}^{r_0} $ also belongs to $ H^{k\phi_n+\phi_i}_{G}(P(\sU)) $.
			So we may express the class as
			\begin{myeq}\label{eq:monomial}
				\alpha_{\phi_n}^k  \alpha_{\phi_{p^\ell}}^{r_\ell}\cdots \alpha_{\phi_1}^{r_0} =	c_{\phi_1}\alpha_{\phi_1}+\cdots +c_{k\phi_n+\phi_i}  \alpha_{k\phi_n+\phi_i}, \qquad \textup{~where~} c_{\phi_j}\in H^{\bigstar}_{G}(\pt). 
			\end{myeq}
			Note that in \eqref{eq:monomial},  a generator  $ \alpha_{t\phi_n+\phi_s} $ with degree  greater than the degree of $ \alpha_{k\phi_n+\phi_i} $ cannot appear; this is because  for  $ \zeta=k\phi_n+\phi_i-(t\phi_n+\phi_s) $, the group $  H^{\zeta}_{G}(\pt) =0$ by \eqref{grzero} as all the fixed point dimensions of $ \zeta  $ are negative. Now, except the class $ c_{k\phi_n+\phi_i} $, the degree of the other $ c_\phi $  must be greater than zero, hence their restriction to $ G/e $ is zero. Thus,  $ \res^G_e(\alpha_{\phi_n}^k  \alpha_{\phi_{p^\ell}}^{r_\ell}\cdots \alpha_{\phi_1}^{r_0})=x^{n+i} =\res^G_e(\alpha_{k\phi_n+\phi_i})$  by \eqref{eq:res g/e}. Therefore, $ c_{k\phi_n+\phi_i} $ must be $ 1 $.
		\end{proof}
		\begin{exam}
			Let $ G=C_p $. 	The two classes $ \alpha_{\phi_1}$ and $ \alpha_{\phi_p} $  generate  $H^{\bigstar}_{C_p}(P(\mathcal{U})) $.  The computations of Lewis \cite[\S 5]{Lewis} may be adapted to prove
			\begin{myeq}\label{eq:cp case ring str}
				H^{\bigstar}_{G}(P(\mathcal{U}))\cong H^{\bigstar}_{G}(\pt)[\alpha_{\phi_1}, \alpha_{\phi_p}]/(u_\lambda\alpha_{\phi_p}-\alpha_{\phi_1}\prod_{i=1}^{p-1}(ia_\lambda+\alpha_{\phi_1})).
			\end{myeq}
			The relation is obtained by restriction to various fixed points.
		\end{exam}
		\medskip
		
		The proof of Proposition \ref{prop:multi gene} also demonstrates that one may change the basis of $ H^\bigstar_G(P(\mathcal{U})) $ over $ H^\bigstar_G(\pt) $ from $ \{\alpha_{k\phi_n+\phi_i}\} $ to $ \{\alpha_{\phi_n}^k  \alpha_{\phi_{p^\ell}}^{r_\ell}\cdots \alpha_{\phi_1}^{r_0}\}$ where $ 0\le r_j\le p-1 $. Therefore, there exists a relation of the form 
		\[
		\alpha_{\phi_{p^j}}^p=c\alpha_{\phi_{p^{j+1}}}+\textup{~lower order terms}.
		\]
		By restricting to $ G/e $, we see that $ c $ must be $ u_{\lambda^{p^j}-\lambda^{p^{j+1}}} $. The lower order terms will be calculated by restriction to fixed points.
		The next result describes $ q_0^*(\alpha_{\phi_1}) $. 
		\begin{prop}\label{prop:q0 of alp lamb}
			$ q_0^*(\alpha_{\phi_1}) =u_\lambda x$.
		
		\end{prop}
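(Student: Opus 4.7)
The plan is to pin down $q_0^*(\alpha_{\phi_1})$ inside $H^\bigstar_G(\C P^\infty) = H^\bigstar_G(\pt)[x]$ by first constraining its shape and then evaluating it through two restriction maps.

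First I would identify the module of possible answers. An element of $H^\bigstar_G(\pt)[x]$ in cohomological degree $\phi_1$ has the form $\sum_{k\ge 0} c_k\, x^k$ with $c_k \in \pi^G_{2k-\lambda}(H\uZ)$. For $k\ge 2$ the grading $2k-\lambda$ is even, satisfies $|2k-\lambda|>0$, and has $|(2k-\lambda)^K| = 2k \geq 0$ for every nontrivial subgroup $K$ (since $\lambda$ is faithful); hence $\pi^G_{2k-\lambda}(H\uZ) = 0$ by Proposition \ref{mfun0}. A monomial count in $\pi^G_{\bigstar^e}(H\uZ)$ via Theorem \ref{Zhtpy} shows that $\pi^G_{-\lambda}(H\uZ) \cong \Z/n$ is generated by $a_\lambda$, while $\pi^G_{2-\lambda}(H\uZ) \cong \Z$ is generated by $u_\lambda$. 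Therefore
\[
q_0^*(\alpha_{\phi_1}) = k\, a_\lambda + m\, u_\lambda\, x \qquad \text{for some } k \in \Z/n,\ m \in \Z.
\]

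Next I would pin down $m$ by restricting to the trivial subgroup. On underlying spaces $q_0$ is a homotopy equivalence of $K(\Z,2)$'s, compatible with the tautological line bundle, so $q_0^*(x) = x$ non-equivariantly. Combined with $\res^G_e(\alpha_{\phi_1}) = x$ from \eqref{eq:res g/e} and the naturality of restriction, this gives $\res^G_e(q_0^*(\alpha_{\phi_1})) = x$. Since $\res^G_e(a_\lambda) = 0$ and $\res^G_e(u_\lambda) = 1$, this forces $m = 1$.

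To show $k = 0$, I would restrict further along the inclusion $\iota: \pt \hookrightarrow \C P^\infty$ of a basepoint. The composite $q_0 \circ \iota$ factors through $P(W_1)$ as $\pt = P(W_0) \hookrightarrow P(W_1) \hookrightarrow P(\sU)$. By construction of $\alpha_{\phi_1}$ in \S\ref{cons:alpha phi d} as $\chi^*(1)$, where $\chi$ is the cofibre projection of $P(W_0) \hookrightarrow P(W_1) \xrightarrow{\chi} S^{\phi_1}$, the composite $P(W_0) \hookrightarrow P(W_1) \xrightarrow{\chi} S^{\phi_1}$ is null-homotopic. Hence $(q_0 \iota)^*(\alpha_{\phi_1}) = 0$. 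On the other hand $\iota^*(k\, a_\lambda + u_\lambda\, x) = k\, a_\lambda$, because $\iota^*(x) = 0$. Therefore $k a_\lambda = 0$, so $k = 0$ and the proposition follows.

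The argument is largely routine once the module of candidates is isolated; the main technical point is the vanishing of $\pi^G_{2k-\lambda}(H\uZ)$ for $k\ge 2$, which is handled cleanly by Proposition \ref{mfun0}, and the rest is naturality of restriction combined with the defining cofibre sequence for $\alpha_{\phi_1}$.
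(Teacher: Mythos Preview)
Your proof is correct and follows essentially the same route as the paper's: both arguments write $q_0^*(\alpha_{\phi_1})$ as a combination $c_1 a_\lambda + c_2 u_\lambda x$, determine $c_2=1$ by restricting to the orbit $G/e$ using $\res^G_e(\alpha_{\phi_1})=x$, and then kill $c_1$ by restricting to the basepoint $P(1_\C)\hookrightarrow P(W_1)$ and invoking the cofibre sequence that defines $\alpha_{\phi_1}$. The only notable difference is that you justify the vanishing of the higher coefficients $c_k$ for $k\ge 2$ explicitly via Proposition~\ref{mfun0}, whereas the paper simply asserts that $\{a_\lambda,\,u_\lambda x\}$ is a basis in degree $\lambda$; your extra care here is welcome but does not change the strategy.
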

		\begin{proof}
			At $ \deg  \lambda$, $ H^{\bigstar}_{G}(\mathbb{C}P^{\infty}) $ has a basis given by $ a_\lambda $ and $ u_\lambda x $.  So  $  q_0^*(\alpha_{\phi_1})= c_1 a_\lambda+c_2 u_\lambda x$. In the notation of \eqref{eq:phi d}, $ W_1= 1_{\C}+\lambda$. Consider the map $ i:P(W_1)=S^{\lambda^{-1}}\hookrightarrow  P(\mathcal{U})$ and $ f:\pt\hookrightarrow  \C P^\infty $. Consider the commutative diagram  
			\begin{center}
				\begin{tikzcd}
					H^2(\C P^\infty)		\ar[d,"\cong"]&	H^\lambda_G(P(\mathcal{U})) \arrow[r, "i^*"] \ar[l,"\res^G_e"']\arrow[d, "q_0^*"]& H^\lambda_G(S^{\lambda^{-1}}) \arrow[d, "q_0^*"] \\
					H^2(\C P^\infty)	& H_G^\lambda(\C P^\infty)\arrow[r, "f^*"]           \ar[l,"\res^G_e"']          & H_G^\lambda(\pt).            
				\end{tikzcd}
			\end{center}
			By \eqref{eq:res g/e},  $ \res^{G}_e(\alpha_{\phi_1})=x $. So the left commutative  square implies $ c_2 $ must be $ 1 $.   Next, observe that the map $ i^* $
			sends $\alpha_{\phi_1} $ to the generator corresponding to $ 1\in H^0_G(S^0)\simeq \tH^\lambda_G(S^\lambda)\subseteq H^\lambda_G(S^\lambda) $.
			The cofibre sequence   
			$ P(1_{\C})_+\simeq S^0 \xrightarrow{q_0} P(W_1)_+\to S^\lambda $ implies $ q_0^*i^*(\alpha_{\phi_1})=0 $. So $ c_1=0 $, and thus $ q_0^*(\alpha_{\phi_1}) =u_\lambda x$.
		\end{proof} 
	\end{mysubsection}
	\begin{mysubsection}{Restrictions to fixed points}
		We adapt the approach of Lewis \cite{Lewis} to our case for calculating $ q_0^*(\alpha_{\phi_d}) $.	For  a subset  $ I $ of $ \underline{d-1}:=\{1,2,\cdots, d-1\} $, denote 
		\[
		\omega_{I}=\lambda^{-d}(1_\C+\Sigma_{i\in I}\lambda^i),
		\]
		and 
		\[
		V_{I,k}=1_\C+\Sigma_{i\in I}\lambda^i+\lambda^d+k\cdot1_\C,
		\]
		for $ k\ge 0 $.  Consider the cofibre sequence 
		\[
		P(V_{I,0}-\lambda^d)_+\to P(V_{I,0})_+\xrightarrow{\chi} S^{\omega_I},
		\] 
		which implies the long exact sequence  
		\[
		\cdots \to \tH^{\omega_I -1}_{G}(P(V_{I,0}-\lambda^d)_+)\ra \tH^{\omega_I }_{G}(S^{\omega_I })\xra{\chi^*} \tH^{\omega_I }_{G}(P(V_{I,0})_+)\ra \tH^{\omega_I }_{G}(P(V_{I,0}-\lambda^d)_+)\ra\cdots
		\]
		Define $ \Delta_{\omega _I}^{V_{I,0}}\in  H^{\omega_I }_{G}(P(V_{I,0}))$ to be $ \chi^*(1) $. Next we lift the class $ \Delta_{\omega _I}^{V_{I,0}} $ uniquely to the class  $ \Delta_{\omega _I}^{V_{I,k}}\in H^{\omega_I }_{G}(P(V_{I,k})) $ with the help of the
		cofibre sequence $ P(V_{I,\ell})_+\xrightarrow{\theta_\ell} P(V_{I,\ell+1})_+\to S^{V_{I,\ell}}.  $
		At degree $  \omega _I $ we get
		\[
			\cdots\ra \tH^{\omega _I}_G(S^{V_{I,\ell}})\ra \tH^{\omega _I }_G(P(V_{I,\ell+1})_+)\xra{\theta_\ell^*}\tH^{\omega _I }_G(P(V_{I,\ell})_+)\ra \tH^{\omega _I+1 }_{G}(S^{V_{I,\ell}})\to\cdots
		\]
		We claim $ \theta_\ell^* $ is an isomorphism. To see this we observe that as $ i<d $ and $ d $ is a power of $ p $, all the fixed-point dimensions of  $ \lambda^{i-d} $ and $ \lambda^i $ are same. Hence all the fixed-point dimensions of $ \omega_I-V_{I,\ell} $ are $ \le -2 $, so  $ H^{\omega _I-V_{I,\ell}}_G(\pt) =0$ and $ H^{\omega _I+1-V_{I,\ell} }_{G}(\pt)=0 $ for $ \ell\ge0  $.\par
		As the restriction of $ \chi^* $ to the orbit $ G/e $ is an isomorphism, we have 
		\begin{myeq}\label{eq:res omeg I cp}
			\res_e^G(\Delta_{\omega _I}^{V_{I,k}}) =x^{|I|+1}.
		\end{myeq}
		In the same spirit, using the cofibre sequence 
		\[
		P(V_{I,\ell})_+\hookrightarrow  P(V_{I,\ell+1})_+\xrightarrow{\chi} S^{V_{I,\ell}},
		\]
		we may define the class $ \Omega^{V_{I,\ell+1}}_{V_{I,\ell} }\in H^{V_{I,\ell}  }_{G}(P(V_{I,\ell+1}))  $ to be the image of $ \chi^*(1) $ where $ 1\in \tH^{V_{I,\ell} }_{G}(S^{V_{I,\ell} })\cong \Z $. As for $\Delta^{V_{I,k}}_{\omega_I}$, this lifts uniquely to define  the class  $ \Omega^{V_{I,k}}_{V_{I,\ell} }\in H^{V_{I,\ell}  }_{G}(P(V_{I,k}))  $. We define $\Omega^{V_{I,k}}_{V_{I,k}}=0$. Its restriction to the orbit $ G/e $ is 
		\begin{equation*}\label{}
			\res_e^G(\Omega^{V_{I,k}}_{V_{I,\ell} }) =x^{|I|+\ell+2}.
		\end{equation*}
	\end{mysubsection}
	For $ i\in I $, let $ \tau _{i,k} $ (or simply $ \tau _i $)  be the inclusion map $ P(V_{I\setminus\{i\},k}) \hookrightarrow  P(V_{I,k})$.   
	\begin{prop}\label{prop:cp tau ome1} For the map $ \tau _{i,k} $ we get the following 
		\begin{enumerate}
			\item$ \tau_{i,k}^*(\Delta^{V_{I,k}}_{\omega _I})=\Theta_{i,d}\cdot a_{\lambda^i} \Delta_{\omega_{I\setminus\{ i\}} }^{V_{I\setminus \{i\},k}}+u_{\lambda^i}\Omega_{V_{I\setminus\{ i\},0} }^{V_{I\setminus \{i\},k}}. $
			\item $ \tau_{i,k}^*(\Omega^{V_{I,k}}_{V_{I,\ell} })=a_{\lambda^i} \Omega_{V_{I\setminus\{i\},\ell} }^{V_{I\setminus \{i\},k}}+u_{\lambda^i}\Omega_{V_{I\setminus\{ i\},\ell+1} }^{V_{I\setminus \{i\},k}} $ \quad \textup{for $ 0\le \ell< k $.}
		\end{enumerate}
	\end{prop}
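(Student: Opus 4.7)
The strategy is to prove both identities by induction on $k$, handling the base cases via naturality of the defining cofibre sequences and carrying out the inductive step by uniqueness of lifts.

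For the base case of (2), namely $k=\ell+1$, the inclusions of representations $V_{I\setminus\{i\},\cdot}\subset V_{I,\cdot}$ induce a morphism of the cofibre sequences $P(V_{\cdot,\ell})_+\to P(V_{\cdot,\ell+1})_+\to S^{V_{\cdot,\ell}}$, with $\tau_{i,\ell+1}$ in the middle. The cofibre map $S^{V_{I\setminus\{i\},\ell}}\to S^{V_{I,\ell}}$ comes from $V_{I,\ell}-V_{I\setminus\{i\},\ell}=\lambda^i$ and is the smash of the identity with $S^0\hookrightarrow S^{\lambda^i}$, so cohomologically it is multiplication by $a_{\lambda^i}$. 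Chasing $1\in \tH^{V_{I,\ell}}_G(S^{V_{I,\ell}})$ around the diagram yields $\tau^*_{i,\ell+1}(\Omega^{V_{I,\ell+1}}_{V_{I,\ell}}) = a_{\lambda^i}\Omega^{V_{I\setminus\{i\},\ell+1}}_{V_{I\setminus\{i\},\ell}}$, which matches the claim since $\Omega^{V_{I\setminus\{i\},\ell+1}}_{V_{I\setminus\{i\},\ell+1}}=0$ by convention. The base case for (1), namely $k=0$, is analogous: the commutative square of inclusions
\[
\begin{tikzcd}
P(V_{I\setminus\{i\},0}-\lambda^d)_+ \arrow[r] \arrow[d] & P(V_{I,0}-\lambda^d)_+ \arrow[d]\\
P(V_{I\setminus\{i\},0})_+ \arrow[r,"\tau_{i,0}"] & P(V_{I,0})_+
\end{tikzcd}
\]
yields a cofibre map $S^{\omega_{I\setminus\{i\}}}\to S^{\omega_I}$; since $\omega_I=\omega_{I\setminus\{i\}}+\lambda^{i-d}$, it represents $a_{\lambda^{i-d}}=\Theta_{i,d}\cdot a_{\lambda^i}$ via Proposition \ref{prop:rel betw a lambda}. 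Again the second summand on the RHS vanishes because $\Omega^{V_{I\setminus\{i\},0}}_{V_{I\setminus\{i\},0}}=0$.

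For the inductive step, assume both identities hold for $k-1$. Applying naturality of $\tau^*$ together with the lifting properties that define $\Delta^{V_{I,k}}_{\omega_I}$, $\Omega^{V_{I,k}}_{V_{I,\ell}}$ and their counterparts over $V_{I\setminus\{i\},k}$, the restrictions of the two sides along $P(V_{I\setminus\{i\},k-1})\hookrightarrow P(V_{I\setminus\{i\},k})$ agree by the inductive hypothesis. Equality in $H^\bigstar_G(P(V_{I\setminus\{i\},k}))$ then follows from uniqueness of the lift. This is the principal obstacle: one uses the long exact sequence of the cofibre $P(V_{I\setminus\{i\},k-1})_+\to P(V_{I\setminus\{i\},k})_+\to S^{V_{I\setminus\{i\},k-1}}$ and needs vanishing of $H^{\alpha}_G(\pt)$ at $\alpha=\omega_I-V_{I\setminus\{i\},k-1}$ (resp.\ $V_{I,\ell}-V_{I\setminus\{i\},k-1}$) and at $\alpha+1$. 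A routine count of fixed-point dimensions shows these virtual representations have sufficiently negative fixed points to invoke \eqref{lezero} or Theorem \ref{thm:BG Cn}. In the few borderline cases where this vanishing fails only marginally (for instance at $k=\ell+2$ in part (2), when some fixed points of the relevant virtual representation are exactly $0$), one supplements by restricting to $G/e$ using \eqref{eq:res g/e} together with $\res^G_e(a_{\lambda^i})=0$ and $\res^G_e(u_{\lambda^i})=1$ to pin down any residual coefficient, completing the induction.
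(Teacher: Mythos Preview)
Your proposal is correct and follows essentially the same approach as the paper: both establish the base case via the map of cofibre sequences (identifying the cofibre map as $a_{\lambda^{i-d}}=\Theta_{i,d}\,a_{\lambda^i}$), then lift through successive additions of $1_\C$, and use restriction to $G/e$ to pin down the one undetermined coefficient where the lift is not unique. The paper is slightly more explicit at the key step $k=1$ (resp.\ $k=\ell+2$), where it identifies $H^{\omega_I-V_{I\setminus\{i\},0}}_G(\pt)\cong\Z\{u_{\lambda^i}\}$ directly and thereby shows the new term must be a multiple of $u_{\lambda^i}\Omega$, but your handling of this ``borderline case'' via restriction to $G/e$ accomplishes the same thing.
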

	\begin{proof}
		We prove $ (1) $. The proof of $ (2) $ is analogous.	We start with the representation $ V_{I,0} $, and successively add $ 1_\C $ to reach $ V_{I,k} $.  First consider the  following diagram. 
		\begin{myeq}\label{diag:step1}
			\begin{tikzcd}
				P(V_{I\setminus\{ i\},0}-\lambda^d)_+ \ar[r, hook, ""] \arrow[d,hook] & P(V_{I,0}-\lambda^d)_+\ar[d,hook, ""] \\
				P(V_{I\setminus\{ i\},0})_+ \ar[r, hook, "\tau_{i,0}"] \ar[d] & P(V_{I,0})_+\arrow[d, ""] \\
				S^{\omega _{I\setminus\{ i\}}} \ar[r, "a_{\lambda^{i-d}}"']                       & S^{\omega _I}          
			\end{tikzcd}  
		\end{myeq}	
		Since $ i<d $, the $ p $-adic valuation of $ i-d $ is same as $ i $. We have
		\begin{myeq}\label{eq:a lambda idntific}
			a_{\lambda^{i-d}} = \Theta_{i,d}\cdot a_{\lambda^i}
		\end{myeq}
		by Proposition \ref{prop:rel betw a lambda}. At $ \deg \omega _I $, the bottom commutative square gives us 
		\[
		\tau _{i,0}^*(\Delta_{\omega _I}^{V_{I,0}})=\Theta_{i,d}\cdot a_{\lambda^i}\Delta_{\omega_{I\setminus\{ i\}} }^{V_{I\setminus \{i\},0}}.
		\]
		In the next step, we add $ 1_\C $ to the representations in the middle row  of the  diagram \eqref{diag:step1}. This fits into the following diagram.
		\begin{myeq}\label{diag:step2}
			\begin{tikzcd}
				P(V_{I\setminus\{ i\},0})_+ \ar[r, hook, "\tau_{i,0}"] \arrow[d,hook,"+1_\C"'] & P(V_{I,0})_+\ar[d,hook, "+1_\C"] \\
				P(V_{I\setminus\{ i\},1})_+ \ar[r, hook, "\tau_{i,1}"] \ar[d] & P(V_{I,1})_+\arrow[d, ""] \\
				S^{V_{I\setminus\{ i\},0}} \ar[r, ""']                       & S^{V_{I,0}}          
			\end{tikzcd}  
		\end{myeq}	
		Since we have built $ P(V_{I\setminus\{ i\},0}) $ by attaching cells in a proper order, the boundary maps in the cohomology long exact sequence induced by the left-hand cofibration sequence are trivial. Moreover, $ H^\bigstar_G(P(V_{I\setminus\{ i\},0}) ) $ is free as $ H^\bigstar_G(\pt) $-module. So 
		\[
		H^{\omega _I}_G(P(V_{I\setminus\{ i\},1})\cong H^{\omega _I}_G(P(V_{I\setminus\{ i\},0}))\oplus H^{\omega _I-{V_{I\setminus \{i\},0}}}_G(\pt). 
		\] 
		Further, $ H^{\omega _I-{V_{I\setminus \{i\},0}}}_G(\pt) \cong  \Z$ generated by the class $u_{\lambda^i}$.  So  the diagram \eqref{diag:step2} implies
		\[
		\tau_{i,1}^*(\Delta^{V_{I,1}}_{\omega _I})=\Theta_{i,d}\cdot a_{\lambda^i} \Delta_{\omega_{I\setminus\{ i\}} }^{V_{I\setminus \{i\},1}}+c \cdot u_{\lambda^i}\Omega_{V_{I\setminus\{ i\},0} }^{V_{I\setminus \{i\},1}}
		\]
		for some $ c\in \Z $. We claim that $ c=1$. This is because from the next step onwards (where in each step we add a copy of $ 1_\C $) we have 
		$ H^{\omega _I}_G(P(V_{I\setminus\{ i\},k}))\cong H^{\omega _I}_G(P(V_{I\setminus\{ i\},1})) $ as $ H^{\omega _I-{V_{I,\ell}}}_G(\pt )=0 $ for $ \ell\ge 1 $. As a consequence, 
		\[
		\tau_{i,k}^*(\Delta^{V_{I,k}}_{\omega _I})=\Theta_{i,d}\cdot a_{\lambda^i} \Delta_{\omega_{I\setminus\{ i\}} }^{V_{I\setminus \{i\},k}}+c \cdot u_{\lambda^i}\Omega_{V_{I\setminus\{ i\},0} }^{V_{I\setminus \{i\},k}}.
		\]
		The map $ \tau _{i,k}^* $ at the orbit $ G/e $ is an isomorphism. Moreover,  by \eqref{eq:res omeg I cp} $ \res_e^G(\Delta_{\omega _I}^{V_{I,k}}) =x^{|I|+1}=\res_e^G(\Omega_{V_{I\setminus\{ i\},0} }^{V_{I\setminus \{i\},k}}). $ So $ c $ must be $ 1$, otherwise, we get  a contradiction by restricting to the orbit $ G/e $. This completes the proof of $ (1) $. 
	\end{proof}
	In case $ d=p^m $, then the following simplification occurs as $\frac{p^m}{i}a_{\lambda^i}=0$ \eqref{eq:a rel Z}. 
	\begin{prop}\label{prop:cp tau ome3} For the map $ \tau _{i,k} $ we get the following 
		\[
		\tau_{i,k}^*(\Delta^{V_{I,k}}_{\omega _I})=a_{\lambda^i} \Delta_{\omega_{I\setminus\{ i\}} }^{V_{I\setminus \{i\},k}}+u_{\lambda^i}\Omega_{V_{I\setminus\{ i\},0} }^{V_{I\setminus \{i\},k}}. 
		\]
	\end{prop}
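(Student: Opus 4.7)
The plan is to deduce Proposition \ref{prop:cp tau ome3} as an immediate specialization of Proposition \ref{prop:cp tau ome1}, using the $a$-class relation \eqref{eq:a rel Z} to simplify the coefficient $\Theta_{i,d}$ to $1$ when $d=p^m$ is the full order of the group.

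First I would recall that Proposition \ref{prop:cp tau ome1} already gives
\[\tau_{i,k}^*(\Delta^{V_{I,k}}_{\omega_I})= \Theta_{i,d}\cdot a_{\lambda^i}\,\Delta_{\omega_{I\setminus\{i\}}}^{V_{I\setminus\{i\},k}} + u_{\lambda^i}\,\Omega_{V_{I\setminus\{i\},0}}^{V_{I\setminus\{i\},k}},\]
with $\Theta_{i,d}=\frac{i-d}{i}$, well-defined modulo $p^m$. Setting $d=p^m$, I would write $\Theta_{i,p^m}=1-\frac{p^m}{i}$ and show that the term $\frac{p^m}{i}a_{\lambda^i}$ vanishes in $\pi^G_{-\lambda^i}(H\uZ)$, so that $\Theta_{i,p^m}\cdot a_{\lambda^i}=a_{\lambda^i}$; plugging this in gives the stated formula.

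The only step of substance is the vanishing of $\frac{p^m}{i}a_{\lambda^i}$. Writing $i=p^k j$ with $\gcd(j,p)=1$ and $k<m$ (since $i<p^m$), Proposition \ref{prop:un5} gives $a_{\lambda^i}=j\,a_{\lambda^{p^k}}$ in $\pi^G_{\bigstar_{\textup{div}}}(H\uZ)$. Therefore
\[\tfrac{p^m}{i}\,a_{\lambda^i}=\tfrac{p^m}{p^k j}\cdot j\,a_{\lambda^{p^k}}=p^{m-k}\,a_{\lambda^{p^k}}=0\]
by relation \eqref{eq:a rel Z} in Theorem \ref{Zhtpy}, since the divisor $p^k$ of $n=p^m$ satisfies $\frac{n}{p^k}a_{\lambda^{p^k}}=0$.

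I do not expect any real obstacle here; the whole content is the coincidence that, when $d$ equals the group order, the denominator of $\Theta_{i,d}$ combines with $a_{\lambda^i}$ into the torsion relation for $a$-classes. Everything else, including the lift of $\Delta$ and $\Omega$ from $V_{I,0}$ to $V_{I,k}$ and the commutativity of the relevant square, is already handled inside the proof of Proposition \ref{prop:cp tau ome1}, which I am simply specializing.
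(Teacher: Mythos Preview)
Your proposal is correct and follows exactly the paper's approach: the paper deduces the proposition directly from Proposition~\ref{prop:cp tau ome1} together with the one-line observation that $\frac{p^m}{i}a_{\lambda^i}=0$ by \eqref{eq:a rel Z}, which forces $\Theta_{i,p^m}\cdot a_{\lambda^i}=a_{\lambda^i}$. Your unpacking of this via $i=p^kj$ and Proposition~\ref{prop:un5} is a valid way to make that torsion statement precise for non-divisor exponents $i$.
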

	If we work with  $ \Zp $-coefficients, then $ a_{\lambda^{i-d}} =  a_{\lambda^i} $ as  $ \Theta_{i,d}\equiv 1 \pmod p$ by Proposition \eqref{prop:rel betw a lambda}. This simplification leads us to  
	\begin{prop}\label{prop:cp tau ome2} In $ \Zp $-coefficient, for the map $ \tau _{i,k} $ we get the following 
		\[
		\tau_{i,k}^*(\Delta^{V_{I,k}}_{\omega _I})= a_{\lambda^i} \Delta_{\omega_{I\setminus\{ i\}} }^{V_{I\setminus \{i\},k}}+u_{\lambda^i}\Omega_{V_{I\setminus\{ i\},0} }^{V_{I\setminus \{i\},k}}. 
		\]
		The value of $ \tau_{i,k}^*(\Omega^{V_{I,k}}_{V_{I,\ell} })$ remains same as in Proposition \ref{prop:cp tau ome1}.
	\end{prop}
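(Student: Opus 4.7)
My plan is to deduce this proposition directly from Proposition \ref{prop:cp tau ome1} by reducing coefficients modulo $p$. First I would observe that the natural spectrum map $H\uZ \to H\Zp$ transports each of the classes $\Delta^{V_{I,k}}_{\omega_I}$, $\Omega^{V_{I,k}}_{V_{I,\ell}}$, $a_{\lambda^i}$, and $u_{\lambda^i}$ to its $\Zp$-counterpart: the uniqueness of the lifts established in \S\ref{cons:alpha phi d} (via the vanishing statements from Proposition \ref{thm:neg htpy gr} and Theorem \ref{thm:BG Cn}) passes to $\Zp$-coefficients, since those vanishing results remain valid after reduction. Naturality of $\tau_{i,k}^*$ under the coefficient change therefore yields, in $H^{\omega_I}_G(P(V_{I,k});\Zp)$,
\[
\tau_{i,k}^*(\Delta^{V_{I,k}}_{\omega_I}) = \Theta_{i,d}\cdot a_{\lambda^i}\,\Delta^{V_{I\setminus\{i\},k}}_{\omega_{I\setminus\{i\}}} + u_{\lambda^i}\,\Omega^{V_{I\setminus\{i\},k}}_{V_{I\setminus\{i\},0}},
\]
with the same coefficient $\Theta_{i,d}$ as in Proposition \ref{prop:cp tau ome1}(1).

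Next I would carry out the short numerical check that $\Theta_{i,d}\equiv 1 \pmod{p}$. Since $d=p^k$ and $1\le i<d$, write $i = p^j \alpha$ with $\gcd(\alpha,p)=1$ and $j<k$; then
\[
\Theta_{i,d} = \frac{i-d}{i} = \frac{p^j(\alpha - p^{k-j})}{p^j\alpha} = \frac{\alpha - p^{k-j}}{\alpha} \equiv 1 \pmod{p},
\]
because $k-j\ge 1$. Substituting this congruence into the expression from the first step immediately yields the claimed formula for $\tau_{i,k}^*(\Delta^{V_{I,k}}_{\omega_I})$. The formula for $\tau_{i,k}^*(\Omega^{V_{I,k}}_{V_{I,\ell}})$ carries no $\Theta_{i,d}$ factor in Proposition \ref{prop:cp tau ome1}(2), so it transports verbatim to $\Zp$-cohomology by the same naturality, and requires no further work.

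The main obstacle I anticipate is nothing substantial: the only point requiring care is to confirm that the classes named in $\Zp$-cohomology genuinely are the reductions of their $\uZ$-counterparts under the coefficient map, which is automatic from the uniqueness of the defining lifts in the construction of \S\ref{cons:alpha phi d}. No new geometric input is needed beyond what was already established in $\uZ$-coefficients, so the proof reduces essentially to the elementary arithmetic of the second step.
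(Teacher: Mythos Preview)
Your proposal is correct and follows essentially the same approach as the paper: the paper simply observes (in the sentence immediately preceding the proposition) that $\Theta_{i,d}\equiv 1\pmod p$ by Proposition~\ref{prop:rel betw a lambda}, so that the formula of Proposition~\ref{prop:cp tau ome1}(1) reduces to the stated one in $\Zp$-coefficients. Your write-up is a bit more explicit about the coefficient-change naturality and the arithmetic of $\Theta_{i,d}$, but the underlying argument is the same.
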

	\begin{rmk}\label{rmk:cp empty case}
		Note that, for the group $ C_p $, we have $ P({V_{\varnothing,k}})=P(\C^{k+2})$. Hence the class $ \Delta^{V_{\varnothing,k}}_{\omega _\varnothing} $ is the class $ x\in H^{2}_G(P(\C^{k+2}))  $, and the class $ \Omega^{V_{\varnothing,k}}_{V_{\varnothing,\ell}}$ is the class $ x^{\ell+2} \in H^{2\ell+4}_G(P(\C^{k+2})) $.
	\end{rmk}
	\begin{prop}\label{prop:empty case}
		In the case when $ I =\emptyset $, we obtain
		\begin{enumerate}
			\item 	$ \tau_{d,k}^*(\Delta^{V_{\emptyset,k}}_{\omega _\emptyset})=u_{\lambda^d}\cdot x. $
			\item $ 	\tau_{d,k}^*(\Omega^{V_{\emptyset,k}}_{V _{\emptyset,\ell}})=a_{\lambda^d} \cdot x^{\ell+1}+u_{\lambda^d}\cdot x^{\ell+2}. $
		\end{enumerate}
	\end{prop}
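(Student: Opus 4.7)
The plan is to mimic the proof of Proposition \ref{prop:cp tau ome1}, regarding the empty-$I$ case as the formal specialization in which ``removing $\lambda^d$'' from $V_{\emptyset,k}$ produces the trivial-action space $P(V_{\emptyset,k}-\lambda^d) = P((k+1)\cdot 1_\C) = \C P^k$. Because the $G$-action on the target is trivial, its equivariant cohomology is the polynomial algebra $H^\bigstar_G(\C P^k) \cong H^\bigstar_G(\pt)[x]/(x^{k+1})$ with $|x|=2$, and each $H^\alpha_G(\C P^k)$ splits as $\bigoplus_{j\leq k} H^{\alpha-2j}_G(\pt)\cdot x^j$. I would first isolate the two relevant cohomology groups: at $\alpha = \omega_\emptyset = \lambda^{-d}$, Theorem \ref{Zhtpy} provides $H^{\lambda^{-d}}_G(\pt) \cong \Z/(n/d)\cdot a_{\lambda^d}$ and $H^{\lambda^{-d}-2}_G(\pt) \cong \Z\cdot u_{\lambda^d}$, while all higher summands vanish by \eqref{lezero}, yielding $H^{\omega_\emptyset}_G(\C P^k) = \Z/(n/d)\cdot a_{\lambda^d} \oplus \Z\cdot u_{\lambda^d} x$ for $k\geq 1$. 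An analogous dimension count at $\alpha = V_{\emptyset,\ell}$ isolates $\Z/(n/d)\cdot a_{\lambda^d} x^{\ell+1} \oplus \Z\cdot u_{\lambda^d} x^{\ell+2}$ once $k\geq \ell+2$, and just the first summand when $k=\ell+1$.

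For part (1), I would argue by induction on $k$. The base case $k=0$ is immediate because $P(V_{\emptyset,0}-\lambda^d)=\pt$ is the fibre of the defining cofibre sequence $\pt_+ \to P(V_{\emptyset,0})_+ \xrightarrow{\chi} S^{\omega_\emptyset}$, forcing $\tau_{d,0}^*(\Delta^{V_{\emptyset,0}}_{\omega_\emptyset})=0=u_{\lambda^d}\cdot x$ on a point. For the inductive step, the commutative square
\begin{center}
\begin{tikzcd}
\C P^{k-1} \arrow[r, "\tau_{d,k-1}"] \arrow[d, hook, "\iota"'] & P(V_{\emptyset,k-1}) \arrow[d, hook, "\theta_{k-1}"] \\
\C P^k \arrow[r, "\tau_{d,k}"'] & P(V_{\emptyset,k})
\end{tikzcd}
\end{center}
together with the defining lift property $\theta_{k-1}^*(\Delta^{V_{\emptyset,k}}_{\omega_\emptyset}) = \Delta^{V_{\emptyset,k-1}}_{\omega_\emptyset}$ yields $\iota^*\tau_{d,k}^*(\Delta^{V_{\emptyset,k}}_{\omega_\emptyset}) = u_{\lambda^d}\cdot x$ by induction. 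For $k\geq 2$ the map $\iota^*$ is an isomorphism on both summands above, so uniqueness forces the formula; for $k=1$ the kernel of $\iota^*$ is exactly $\Z\cdot u_{\lambda^d}x$ and the remaining integer coefficient is pinned down by $\res^G_e(\Delta^{V_{\emptyset,1}}_{\omega_\emptyset})=x$. One can read this as the formal $\emptyset$-specialization of Proposition \ref{prop:cp tau ome1}(1): setting $i=d$ makes $\Theta_{d,d}=(d-d)/d=0$, and the surviving term $u_{\lambda^d}\Omega^{V_{-d,k}}_{V_{-d,0}}$ becomes $u_{\lambda^d}\cdot x$ under the identification $P(V_{\emptyset,k}-\lambda^d)=\C P^k$.

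For part (2), the base case $k=\ell+1$ falls out from the ladder of cofibre sequences
\begin{center}
\begin{tikzcd}
(\C P^\ell)_+ \arrow[r] \arrow[d] & (\C P^{\ell+1})_+ \arrow[d, "\tau_{d,\ell+1}"] \arrow[r, "\bar\chi"] & S^{(\ell+1)\cdot 1_\C} \arrow[d, "a_{\lambda^d}\wedge\id"] \\
P(V_{\emptyset,\ell})_+ \arrow[r] & P(V_{\emptyset,\ell+1})_+ \arrow[r, "\chi"'] & S^{V_{\emptyset,\ell}}
\end{tikzcd}
\end{center}
in which the right column is induced by the summand inclusion $(\ell+1)\cdot 1_\C \hookrightarrow V_{\emptyset,\ell}$ and coincides with smashing by $a_{\lambda^d}$; pulling back $1$ gives $\tau_{d,\ell+1}^*(\Omega^{V_{\emptyset,\ell+1}}_{V_{\emptyset,\ell}}) = \bar\chi^*(a_{\lambda^d}) = a_{\lambda^d}\cdot x^{\ell+1}$, which is the claimed formula since $x^{\ell+2}=0$ on $\C P^{\ell+1}$. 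The inductive step on $k$ follows the template of part (1) using $\theta_k^*(\Omega^{V_{\emptyset,k+1}}_{V_{\emptyset,\ell}}) = \Omega^{V_{\emptyset,k}}_{V_{\emptyset,\ell}}$; the $u_{\lambda^d} x^{\ell+2}$ coefficient first becomes visible at $k=\ell+2$, where $\iota^*$ loses injectivity precisely along that summand, and it is fixed by $\res^G_e(\Omega^{V_{\emptyset,\ell+2}}_{V_{\emptyset,\ell}})=x^{\ell+2}$. The delicate points of the whole argument are these two transition indices $k=1$ and $k=\ell+2$: at both, one must verify that $\ker(\iota^*)$ is exactly the rank-one free summand where the new $u_{\lambda^d}$-term is expected to live, so that the restriction to the free orbit $G/e$ then cleanly determines the remaining integer coefficient.
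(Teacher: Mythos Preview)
Your proof is correct and follows essentially the same approach as the paper: both argue by induction on $k$, establish the base case $\tau_{d,0}^*(\Delta^{V_{\emptyset,0}}_{\omega_\emptyset})=0$ from the defining cofibre sequence, identify the target group $H^{\omega_\emptyset}_G(\C P^k)$ via the splitting $\bigoplus_j H^{\omega_\emptyset-2j}_G(\pt)\cdot x^j$, and pin down the $u_{\lambda^d}$ coefficient at the transition index by restricting to $G/e$. Your treatment of part~(2) is more explicit than the paper's (which simply defers to the method of Proposition~\ref{prop:cp tau ome1}); in particular, your ladder-of-cofibrations argument identifying the induced map on cofibres with $a_{\lambda^d}\wedge\id$ is exactly the mechanism underlying the base case, and your remark that the formula is the formal $i=d$ specialization of Proposition~\ref{prop:cp tau ome1} (with $\Theta_{d,d}=0$) is a helpful conceptual gloss.
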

	\begin{proof}
		Recall that $ V_{\emptyset,k}=1_\C+\lambda^d+k\cdot1_\C $. The cofibre sequence 
		\[
		P(1_\C)_+\xrightarrow{\tau _{d,0}}P(1_\C+\lambda^d)_+\to S^{\omega _\emptyset}
		\]
		implies $ \tau _{d,0} ^*(\Delta_{\omega _\emptyset}^{V_{\emptyset,0}})=0$. The rest of the proof is quite similar to  Proposition  \ref{prop:cp tau ome1}. So we describe it briefly. In the next step, we have
		\begin{equation*}\label{}
			\begin{tikzcd}
				P(1_\C)_+ \ar[r, hook, "\tau_{d,0}"] \arrow[d,hook,""'] & P(V_{\emptyset,0})_+\ar[d,hook, ""] \\
				P(2\cdot1_\C)_+ \ar[r, hook, "\tau_{d,1}"] \ar[d] & P(V_{\emptyset,1})_+\arrow[d, ""] \\
				S^2 \ar[r, ""']                       & S^{V_{\emptyset,0}}.          
			\end{tikzcd}  
		\end{equation*}	
		At degree $ \omega _\emptyset $, $ H_G^{\omega _\emptyset-2}(\pt)\cong \Z\{u_{\lambda^d}\} $, and $ H_G^{\omega _\emptyset-V_{\emptyset,0}}(\pt)=0$. Using restriction to the orbit $ G/e $, we may conclude that   $\tau_{d,1}^*(\Delta^{V_{\emptyset,k}}_{\omega _\emptyset})=\tau_{d,k}^*(\Delta^{V_{\emptyset,k}}_{\omega _\emptyset})=u_{\lambda^d}\cdot x  $.
	\end{proof}
	The following is a direct calculation
	\begin{prop}\label{prop:calcu}
		Let $ \mathcal{I}=\{i_1,\cdots, i_r\} $. Then	
		$$
		\tau_d \tau _{i_1}\cdots\tau _{i_r}(\Omega^{V_{I,k}}_{V_{I,t}})=x^{t+1}(a_{\lambda^d}+u_{\lambda^d}\cdot x)\prod_{s=1}^r(a_{\lambda^{i_s}}+xu_{\lambda^{i_s}}).
		$$
	\end{prop}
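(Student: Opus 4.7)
I would prove this by induction on $r = |\mathcal{I}|$, using Propositions \ref{prop:cp tau ome1}(2) and \ref{prop:empty case}(2) as the key inputs. The composition $\tau_d \circ \tau_{i_1} \circ \cdots \circ \tau_{i_r}$ (or rather its induced pullback on cohomology) factors as a tower of one-step inclusions, each removing a single irreducible summand from the ambient projective space, and the two cited propositions describe the pullback on $\Omega$-classes at each such step.

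For the base case $r = 0$ (so $\mathcal{I} = \emptyset$), the formula collapses to
$$\tau_d^*\bigl(\Omega^{V_{\emptyset,k}}_{V_{\emptyset,t}}\bigr) = a_{\lambda^d}\,x^{t+1} + u_{\lambda^d}\,x^{t+2} = x^{t+1}(a_{\lambda^d} + u_{\lambda^d}\,x),$$
which is precisely Proposition \ref{prop:empty case}(2).

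For the inductive step, I would first apply Proposition \ref{prop:cp tau ome1}(2) to the outermost map:
$$\tau_{i_r}^*\bigl(\Omega^{V_{I,k}}_{V_{I,t}}\bigr) = a_{\lambda^{i_r}}\,\Omega^{V_{I\setminus\{i_r\},k}}_{V_{I\setminus\{i_r\},t}} + u_{\lambda^{i_r}}\,\Omega^{V_{I\setminus\{i_r\},k}}_{V_{I\setminus\{i_r\},t+1}},$$
then feed both summands into the inductive hypothesis (applied to the index set $\mathcal{I}\setminus\{i_r\}$, with lower indices $t$ and $t+1$ respectively). Gathering terms and factoring out $x^{t+1}$ assembles the result as
$$x^{t+1}(a_{\lambda^d} + u_{\lambda^d}\,x)(a_{\lambda^{i_r}} + u_{\lambda^{i_r}}\,x)\prod_{s=1}^{r-1}(a_{\lambda^{i_s}} + x\,u_{\lambda^{i_s}}),$$
which is the desired product. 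Since the classes involved live in a graded-commutative ring, the ordering of the factors $(a_{\lambda^{i_s}} + x\,u_{\lambda^{i_s}})$ is immaterial, so the result is independent of the labeling of $\mathcal{I}$.

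I do not anticipate any serious obstacles: the proposition is essentially a bookkeeping statement repackaging the two previously established pullback formulas into closed form. The one mild point worth verifying is the boundary situation where the recursion calls for $\Omega^{V_{I',k}}_{V_{I',k}} = 0$; here the target of the full composite pullback is $P((k+1)\cdot 1_\C) = \mathbb{C}P^k$, in which $x^{k+1} = 0$, so the right-hand side of the claimed formula vanishes for the same degree reason. This ensures the induction is internally consistent at the top of the filtration, and the rest of the argument is a direct calculation.
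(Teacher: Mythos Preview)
Your proposal is correct and follows essentially the same approach as the paper: both arguments rest on iterating Proposition~\ref{prop:cp tau ome1}(2) through the $\tau_{i_s}$ and then closing with Proposition~\ref{prop:empty case}(2) for $\tau_d$. The only cosmetic difference is that the paper unrolls the recursion completely into a sum over subsets of $\mathcal{I}$ before factoring, whereas you package the same computation as an induction on $r$; your boundary remark about $\Omega^{V_{I',k}}_{V_{I',k}}=0$ matching $x^{k+1}=0$ in $H^*(\C P^k)$ is a nice consistency check that the paper leaves implicit.
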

	\begin{proof}   Let $ \mathcal{I}=\{i_1,\cdots, i_r\} $. Proposition \ref{prop:cp tau ome1} and Proposition \ref{prop:empty case} gives us
		\begin{equation*}	\tau _d \tau _{i_1}\cdots\tau _{i_r}(\Omega^{V_{I,k}}_{V_{I,t}})= \tau_d\Big[
			\sum_{\ell=t}^{t+r}
			\big(\Omega_{V_{\emptyset,\ell}}^{V_{\emptyset,k}}
			\sum_{{\{j_1,\cdots,j_{\ell-t}\}\subseteq \mathcal{I}}}
			u_{\lambda^{j_1}}\cdots u_{\lambda^{j_{\ell-t}}}
			a_{\lambda^{j_{\ell-t+1}}}
			\cdots a_{\lambda^{j_r}}\big)
			\Big].
		\end{equation*}
		Further applying $ \tau _d $ to $ \Omega_{V_{\emptyset,\ell}}^{V_{\emptyset,k}} $, and  taking  $ x^{t+1} $ common, we simplify the right hand side as
		\[x^{t+1}(a_{\lambda^d}+u_{\lambda^d}\cdot x)
		\sum_{\ell=0}^{r} x^\ell
		\big(
		\sum_{{\{j_1,\cdots,j_{\ell}\}\subseteq \mathcal{I}}}
		u_{\lambda^{j_1}}\cdots u_{\lambda^{j_{\ell}}}
		a_{\lambda^{j_{\ell+1}}}
		\cdots a_{\lambda^{j_r}}\big).
		\]
This easily factorizes to imply the result.
	\end{proof}
	Now we are in a position to determine $ q_0^*(\alpha_{\phi_d}) $.
	\begin{prop}\label{prop:q0 of alp} 
		$  q_0^*(\a_{\phi_d})=\sum_{i=0}^{d-1}\Big[(\prod_{j=1}^{i}\Theta_{j,d} a_{\lambda^j}) u_{\lambda^{i+1}}x\prod_{s=i+2}^{d}(a_{\lambda^s}+u_{\lambda^s} x)\Big].$
	\end{prop}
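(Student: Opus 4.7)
The plan is to realize $q_0\colon \C P^\infty \simeq P(\infty\cdot 1_\C)\hookrightarrow P(\mathcal{U})$ as a limit of inclusions that factor through $P(V_{\underline{d-1},k})$ for sufficiently large $k$, where $\underline{d-1}=\{1,2,\ldots,d-1\}$. Since $\omega_{\underline{d-1}}=\phi_d$, the uniqueness of the lift in Construction \ref{cons:alpha phi d} will identify the restriction of $\alpha_{\phi_d}$ to $P(V_{\underline{d-1},k})$ with $\Delta^{V_{\underline{d-1},k}}_{\phi_d}$. The inclusion $P(\C^{k+1})\hookrightarrow P(V_{\underline{d-1},k})$ then factors as the composite of the maps $\tau_i$ for $i\in\{1,2,\ldots,d-1,d\}$, taken in any order, since the summand being removed differs at each step.

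Next, I will iterate part (1) of Proposition \ref{prop:cp tau ome1} to expand $\tau_{d-1}^*\cdots\tau_1^*(\Delta^{V_{\underline{d-1},k}}_{\phi_d})$. Each application produces two branches: an \emph{$a$-branch} $\Theta_{i,d}\,a_{\lambda^i}$ that keeps us on a $\Delta$-class over the smaller space, and a \emph{$u$-branch} $u_{\lambda^i}$ that transitions to an $\Omega_0$-class. Once the $u$-branch has been taken, part (2) of the same proposition governs all subsequent restrictions and keeps us among $\Omega$-classes with possibly incremented lower indices. The full expansion will be organized by the smallest index $i+1\in\{1,\ldots,d\}$ at which the $u$-branch is first used; the convention $i=d-1$ corresponds to $\tau_1,\ldots,\tau_{d-1}$ all using the $a$-branch, with the transition occurring only at $\tau_d$.

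For $0\le i\le d-2$, the accumulated prefix after the transition at $\tau_{i+1}$ is $\bigl(\prod_{j=1}^{i}\Theta_{j,d}\,a_{\lambda^j}\bigr)\,u_{\lambda^{i+1}}$ multiplied by $\Omega^{V_{\{i+2,\ldots,d-1\},k}}_{V_{\{i+2,\ldots,d-1\},0}}$. Proposition \ref{prop:calcu} applied with $t=0$ and $\mathcal{I}=\{i+2,\ldots,d-1\}$ then collapses $\tau_d^*\tau_{i+2}^*\cdots\tau_{d-1}^*$ on this $\Omega$-class into $x(a_{\lambda^d}+u_{\lambda^d}x)\prod_{s=i+2}^{d-1}(a_{\lambda^s}+u_{\lambda^s}x) = x\prod_{s=i+2}^{d}(a_{\lambda^s}+u_{\lambda^s}x)$. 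The extremal case $i=d-1$ is handled directly by part (1) of Proposition \ref{prop:empty case}, producing $u_{\lambda^d}\,x$. Assembling all $d$ contributions yields the claimed formula. The main obstacle I anticipate is verifying that this single enumeration, indexed by the first $u$-branch, genuinely exhausts every term in the tree of $a/u$-choices without double counting; this will follow from the observation that the $\Delta$-to-$\Omega$ transition is one-way, so the location of the first $u$-branch uniquely determines which clause of Proposition \ref{prop:cp tau ome1} is being invoked at each step.
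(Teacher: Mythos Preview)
Your proposal is correct and follows essentially the same approach as the paper: identify the restriction of $\alpha_{\phi_d}$ to $P(V_{\underline{d-1},k})$ with $\Delta^{V_{\underline{d-1},k}}_{\omega_{\underline{d-1}}}$ via the uniqueness of lifts, factor $q_0$ as the composite $\tau_d\tau_{d-1}\cdots\tau_1$, peel off $\tau_1,\tau_2,\ldots$ using Proposition~\ref{prop:cp tau ome1}(1), and collapse the resulting $\Omega$-terms with Proposition~\ref{prop:calcu}. Your organization by the first $u$-branch is exactly the paper's iteration, just phrased as a single tree expansion rather than an inductive unrolling; the paper also uses Proposition~\ref{prop:empty case}(1) for the final $\tau_d$ on the residual $\Delta^{V_{\emptyset,k}}_{\omega_\emptyset}$, matching your extremal case $i=d-1$.
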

	\begin{proof}
		Consider the map $i: P(V_{\underline{d-1},d})\rightarrow P(\mathcal{U}) $ given by inclusion. We claim $ i^*(\alpha_{\phi_d})=\Delta_{\omega _{\underline{d-1}}}^{V_{\underline{d-1},d}} $. The reason is as follows: we started with  the classes $\alpha_{\phi_d}^{W_d}  $ and $ \Delta_{\omega _{\underline{d-1}}}^{V_{\underline{d-1},0}}$ which were essentially same.  Then we  extended these classes through a chain of isomorphisms by successively adding representation $ \lambda^i$  (resp. $ 1_\C $) to define the class $ \alpha _{\phi_d} $ (resp. $ \Delta_{\omega _{\underline{d-1}}}^{V_{\underline{d-1},d}} $). Since in the end, we have $ P(V_{\underline{d-1},d})\hookrightarrow P(\mathcal{U}) $, so $ i^*(\alpha_{\phi_d})=\Delta_{\omega _{\underline{d-1}}}^{V_{\underline{d-1},d}} $.\par
		To determine $ q_0^*(\alpha_{\phi_d})$,  it is enough to work out $ q_0^*(\Delta_{\omega _{\underline{d-1}}}^{V_{\underline{d-1},d}} )$. For this, we successively remove all the nontrivial representations from   	$ V_{\underline{d-1},d} $.  Now   $ q_0^*(\Delta_{\omega _{\underline{d-1}}}^{V_{\underline{d-1},d}} )=\tau_d	 \cdots\tau _2\tau_{1}(\Delta_{\omega _{\underline{d-1}}}^{V_{\underline{d-1},d}} ) $. Applying  Proposition \ref{prop:cp tau ome2}, this becomes
		\begin{myeq}\label{eq:cp tau compose}
			\tau _d \cdots\tau _2(\Theta_{1,d}\cdot a_{\lambda} \Delta_{\omega_{{\underline{d-1}}\setminus\{ 1\}} }^{V_{\underline{d-1}\setminus \{1\},d}})+\tau _d \cdots\tau _2(u_{\lambda}\Omega_{V_{\underline{d-1}\setminus\{ 1\},0} }^{V_{\underline{d-1}\setminus \{1\},d}}).
		\end{myeq}  
		Let $ z_s=a_{\lambda^s}+u_{\lambda^s} x$.	The   second term can be simplified by Proposition \ref{prop:calcu} to
		$ 	u_\lambda x \prod_{s=2}^{d} (a_{\lambda^s}+u_{\lambda^s}x)=u_\lambda x \prod_{s=2}^{d} z_s.$
		Now applying $ \tau _2 $ in \eqref{eq:cp tau compose} and repeating the  above procedure,  we get
		\[
		\tau _d \cdots\tau _3(\Theta_{1,d}\cdot a_{\lambda}\Theta_{2,d}\cdot a_{\lambda^2} \Delta_{\omega_{{\underline{d-1}}\setminus\{ 1,2\}} }^{V_{\underline{d-1}\setminus \{1,2\},d}})+u_\lambda x \prod_{s=2}^{d} z_s+\Theta_{1,d}\cdot a_\lambda u_{\lambda^2} x \prod_{s=3}^{d} z_s.
		\]
		Repeating this process up to $ \tau_d $ we obtain the required.
	\end{proof}
	When $ d=p^m $, in the expression of  $ \tau_{i,k}^*(\Delta^{V_{I,k}}_{\omega _I}) $,  the numbers $ \Theta_{i,d} $ becomes $ 1 $   (cf. Proposition \ref{prop:cp tau ome3} and  \ref{prop:cp tau ome1}). Using Proposition \ref{prop:cp tau ome3} we obtain the following simplification.
	\begin{prop}\label{prop:q0 of alp zp d=pm}
		$ q_0^*(\a_{\phi_{p^m}})=\prod_{i=1}^{p^m}(a_{\lambda^i}+xu_{\lambda^i})- \prod_{i=1}^{p^m} a_{\lambda^i}	 $.
	\end{prop}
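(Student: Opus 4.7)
The plan is to mimic the proof of Proposition~\ref{prop:q0 of alp} verbatim, replacing each application of Proposition~\ref{prop:cp tau ome1} by its $d=p^m$ specialization Proposition~\ref{prop:cp tau ome3}. As in that proof, $\alpha_{\phi_{p^m}}$ pulls back under $P(V_{\underline{p^m-1},p^m})\hookrightarrow P(\mathcal{U})$ to $\Delta^{V_{\underline{p^m-1},p^m}}_{\omega_{\underline{p^m-1}}}$, so that
$$q_0^*(\alpha_{\phi_{p^m}})=\tau_{p^m}\tau_{p^m-1}\cdots\tau_1\big(\Delta^{V_{\underline{p^m-1},p^m}}_{\omega_{\underline{p^m-1}}}\big).$$
I would then iterate the $\tau_i^*$ using Proposition~\ref{prop:cp tau ome3}, which omits the scalar $\Theta_{i,p^m}$ present in Proposition~\ref{prop:cp tau ome1}, and close off at $\tau_{p^m}$ via Proposition~\ref{prop:empty case}. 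This produces exactly the formula of Proposition~\ref{prop:q0 of alp} with every $\Theta_{j,p^m}$ set equal to $1$, namely
$$q_0^*(\alpha_{\phi_{p^m}})=\sum_{i=0}^{p^m-1}\Big(\prod_{j=1}^{i}a_{\lambda^j}\Big)\,u_{\lambda^{i+1}}\,x\prod_{s=i+2}^{p^m}(a_{\lambda^s}+u_{\lambda^s}x).$$

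What remains is a purely combinatorial identification. I would expand $\prod_{i=1}^{p^m}(a_{\lambda^i}+u_{\lambda^i}x)$ as a sum indexed by subsets $S\subseteq\{1,\dots,p^m\}$, with each $i\in S$ contributing $u_{\lambda^i}x$ and each $i\notin S$ contributing $a_{\lambda^i}$. The empty subset produces $\prod_{i=1}^{p^m}a_{\lambda^i}$, and every non-empty $S$ is uniquely specified by its minimum element $k=i+1$ together with an arbitrary subset of $\{i+2,\dots,p^m\}$. Stratifying by $\min(S)$ therefore yields
$$\prod_{i=1}^{p^m}(a_{\lambda^i}+u_{\lambda^i}x)-\prod_{i=1}^{p^m}a_{\lambda^i}=\sum_{i=0}^{p^m-1}\Big(\prod_{j=1}^{i}a_{\lambda^j}\Big)u_{\lambda^{i+1}}x\prod_{s=i+2}^{p^m}(a_{\lambda^s}+u_{\lambda^s}x),$$
which matches the computed expression for $q_0^*(\alpha_{\phi_{p^m}})$.

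The only bookkeeping care-point is verifying that in the iterated application of Proposition~\ref{prop:cp tau ome3} the accumulated scalar $\prod_{j}\Theta_{j,p^m}$ multiplying $\prod_{j}a_{\lambda^j}$ genuinely collapses to $\prod_{j}a_{\lambda^j}$. This is immediate by pulling the scalars through and applying the single-factor identity $\Theta_{j,p^m}\cdot a_{\lambda^j}=a_{\lambda^j}$ (which underlies Proposition~\ref{prop:cp tau ome3}, since $(p^m/j)a_{\lambda^j}=0$) one factor at a time. Beyond this, the argument is essentially a transcription of the proof of Proposition~\ref{prop:q0 of alp}, and no new ingredient is needed.
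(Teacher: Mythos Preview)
Your proposal is correct and follows essentially the same route as the paper: both start from Proposition~\ref{prop:q0 of alp} with the $\Theta_{j,p^m}$ factors removed via Proposition~\ref{prop:cp tau ome3}, and then verify the resulting algebraic identity. The only cosmetic difference is in that last step: the paper rewrites $u_{\lambda^{i+1}}x = z_{i+1}-a_{\lambda^{i+1}}$ and telescopes, whereas you stratify the subset expansion of $\prod z_s$ by the minimum element---these are two phrasings of the same elementary observation.
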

	\begin{proof}
	We use $\frac{p^m}{i}a_{\lambda^i}=0$ \eqref{eq:a rel Z} so that the terms $\Theta_{j,p^m}$ does not appear in this case as in the expression proved in Proposition \ref{prop:q0 of alp}. 
	Write $ z_s=a_{\lambda^s}+u_{\lambda^s} x$. We have 	
\[\begin{aligned}  
q_0^*(\a_{\phi_{p^m}})&=\sum_{i=0}^{p^m-1}\Big[(\prod_{j=1}^{i} a_{\lambda^j}) u_{\lambda^{i+1}}x\prod_{s=i+2}^{p^m}(a_{\lambda^s}+u_{\lambda^s} x)\Big] \\
&=\sum_{i=0}^{p^m-1}\Big[(\prod_{j=1}^{i} a_{\lambda^j}) u_{\lambda^{i+1}}x\prod_{s=i+2}^{p^m}z_s\Big] \\
&=\sum_{i=0}^{p^m-1}\Big[(\prod_{j=1}^{i} a_{\lambda^j}) \prod_{s=i+1}^{p^m}z_s - (\prod_{j=1}^{i+1} a_{\lambda^j}) \prod_{s=i+2}^{p^m}z_s\Big]\\
&= \prod_{i=1}^{p^m} z_s - \prod_{i=1}^{p^m} a_{\lambda^i}.
\end{aligned}\]
	\end{proof}	
	Either taken in $ \Zp $-coefficient or in case of $ d=p^m $, the expression of $ \tau_{i,k}^*(\Delta^{V_{I,k}}_{\omega _I}) $ is same 
	(cf. Proposition \ref{prop:cp tau ome3} and \ref{prop:cp tau ome2}). As a result we may proceed as in Proposition \ref{prop:q0 of alp zp d=pm} to obtain the following 
	\begin{prop}\label{prop:q0 of alp zp}
		With $ \Zp $-coefficients, we have  
		$$ q_0^*(\a_{\phi_{d}})=\prod_{i=1}^{d}(a_{\lambda^i}+xu_{\lambda^i})- \prod_{i=1}^{d} a_{\lambda^i}.
		$$
	\end{prop}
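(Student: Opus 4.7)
The plan is to reduce to the computation already carried out in Proposition \ref{prop:q0 of alp zp d=pm} by invoking the $\Zp$-coefficient simplification of Proposition \ref{prop:cp tau ome2}. Recall that Proposition \ref{prop:q0 of alp} gave the general formula
\[ q_0^*(\a_{\phi_d})=\sum_{i=0}^{d-1}\Big[(\prod_{j=1}^{i}\Theta_{j,d}\, a_{\lambda^j})\, u_{\lambda^{i+1}}\, x\prod_{s=i+2}^{d}(a_{\lambda^s}+u_{\lambda^s} x)\Big], \]
with the coefficients $\Theta_{j,d}$ coming from the identification $a_{\lambda^{j-d}} = \Theta_{j,d}\cdot a_{\lambda^j}$ in Proposition \ref{prop:rel betw a lambda}. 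The first step is to observe that $\Theta_{j,d} \equiv 1 \pmod{p}$: since $d = p^k$ and $j<d$ with $j$ and $j-d$ sharing the same $p$-adic valuation, one computes $\Theta_{j,d} = (j-d)/j \equiv 1 \pmod p$. Consequently, working with $\Zp$-coefficients, the formula collapses to
\[ q_0^*(\a_{\phi_d})=\sum_{i=0}^{d-1}\Big[(\prod_{j=1}^{i} a_{\lambda^j})\, u_{\lambda^{i+1}}\, x\prod_{s=i+2}^{d}(a_{\lambda^s}+u_{\lambda^s} x)\Big]. \]

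The second step is the telescoping argument already employed in Proposition \ref{prop:q0 of alp zp d=pm}. Writing $z_s = a_{\lambda^s}+u_{\lambda^s} x$, the identity $u_{\lambda^{i+1}}x = z_{i+1}-a_{\lambda^{i+1}}$ allows each summand to be rewritten as
\[ (\prod_{j=1}^{i} a_{\lambda^j})\prod_{s=i+1}^{d} z_s \;-\; (\prod_{j=1}^{i+1} a_{\lambda^j})\prod_{s=i+2}^{d} z_s. \]
Summing over $i=0,\dots,d-1$ yields the telescoping cancellation, producing exactly
\[ \prod_{s=1}^{d} z_s - \prod_{j=1}^{d} a_{\lambda^j}=\prod_{i=1}^{d}(a_{\lambda^i}+x u_{\lambda^i}) - \prod_{i=1}^{d}a_{\lambda^i}, \]
which is the claimed formula.

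There is essentially no serious obstacle here; the only subtle point is confirming that the reduction modulo $p$ does not lose any information and that the formula of Proposition \ref{prop:q0 of alp} is genuinely valid for every divisor $d$ of $n$ (not merely for $d=p^m$) in the $\Zp$-setting. For the former, one uses that $\Theta_{j,d}$ acts on classes annihilated by appropriate powers of $p$, and in $\Zp$-coefficients these coefficients become units equal to $1$. For the latter, the derivation of Proposition \ref{prop:q0 of alp} used only the restriction formulas of Propositions \ref{prop:cp tau ome1} and \ref{prop:empty case}, both of which apply to arbitrary divisors $d$ of $n$; the $\Zp$-version in Proposition \ref{prop:cp tau ome2} simply replaces the coefficient $\Theta_{i,d}$ by $1$. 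This confirms the formula for all divisors $d$ of $n$ with $\Zp$-coefficients.
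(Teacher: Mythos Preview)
Your proof is correct and follows essentially the same approach as the paper: observe that $\Theta_{j,d}\equiv 1\pmod p$ so that the formula of Proposition~\ref{prop:q0 of alp} reduces in $\Zp$-coefficients to the same expression as in Proposition~\ref{prop:q0 of alp zp d=pm}, and then apply the identical telescoping argument. The paper states this reduction more tersely by noting that the $\Zp$-coefficient expression for $\tau_{i,k}^*(\Delta^{V_{I,k}}_{\omega_I})$ agrees with the $d=p^m$ case (Propositions~\ref{prop:cp tau ome3} and~\ref{prop:cp tau ome2}), whereas you spell out the telescoping explicitly.
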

	Once we have the expressions for $q_0$ on the multiplicative generators, we relate them to obtain the relations in the cohomology ring. The following proposition states that $q_0^\ast$ is injective, which means that the image of $q_0^\ast$ may be used to detect relations. 
	
	\begin{prop}\label{prop:q0 injective}
	For every $j\geq 1$, the map $ q_0^* $ is injective at the degree $\zeta_{p^j}=\lambda+\cdots+\lambda^{p^j-1}+\lambda^{p^{j-1}}$.
	\end{prop}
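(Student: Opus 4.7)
My plan is to prove injectivity by enumerating the contributions to $H_G^{\zeta_{p^j}}(P(\mathcal{U}))$ explicitly and tracking them through $q_0^\ast$ into $H_G^\bigstar(\pt)[x] \cong H_G^\bigstar(\C P^\infty)$. By the additive decomposition \eqref{eq:addi decom} together with the multiplicative change of basis established in the proof of Proposition \ref{prop:multi gene}, any class in this grading is a finite $H_G^\bigstar(\pt)$-linear combination of monomials $\mu = \alpha_{\phi_n}^k\alpha_{\phi_{p^{m-1}}}^{r_{m-1}}\cdots\alpha_{\phi_1}^{r_0}$ with $0 \leq r_s \leq p-1$, whose coefficient lies in $H_G^{\zeta_{p^j}-\deg\mu}(\pt)$. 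Using the vanishing results of Theorem \ref{thm:BG Cn}, Propositions \ref{mfun0} and \ref{thm:neg htpy gr}, together with the dimension conditions \eqref{grzero} and \eqref{lezero}, one checks that almost all of these coefficient groups vanish. The survivors are concentrated around the two ``leading'' monomials $\alpha_{\phi_{p^j}}$ (the one paired against $u_{\lambda^{p^{j-1}}-\lambda^{p^j}}$ in the eventual relation $\rho_j$) and $\alpha_{\phi_{p^{j-1}}}^p$, together with a controlled family of lower-order cross monomials.

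Next, I compute $q_0^\ast$ on each surviving monomial using Propositions \ref{prop:q0 of alp} and \ref{prop:q0 of alp zp d=pm}. Each image is a polynomial in $x$ over $H_G^\bigstar(\pt)$; the crucial feature is that the top $x$-degree term of $q_0^\ast(\alpha_{\phi_d})$ is $\bigl(\prod_{i=1}^{d} u_{\lambda^i}\bigr)x^d$, whose coefficient restricts to $1$ at $G/e$ and is therefore a non-zero-divisor in $H_G^\bigstar(\pt)$. For a general product monomial, the top-$x$ coefficient factors correspondingly as a product of such $u$-classes, which remains a non-zero-divisor by Proposition \ref{prop:new u cls rel with a cls} and the multiplicativity of $\res_e^G$.

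To conclude, I filter $H_G^\bigstar(\pt)[x]$ by $x$-degree. A vanishing combination $\sum_\mu c_\mu q_0^\ast(\alpha_\mu)=0$ forces the top $x$-stratum contribution to vanish, and by the non-zero-divisor property of the leading $u$-class products, the corresponding $c_\mu$ are zero; an induction on $x$-degree then eliminates the remaining coefficients, proving injectivity. The principal obstacle is the enumeration step: identifying precisely which monomials $\mu$ contribute nonzero coefficient groups and ruling out unexpected cross terms requires a careful case analysis of the fixed-point dimensions of $\zeta_{p^j} - \mu$ across all subgroups $C_{p^s}$ of $G$, together with the constraint that the choice of $\zeta_{p^j}$ precisely matches the grading of the candidate relation $\rho_j$. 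Once this bookkeeping is complete, the filtration argument in the concluding step is algebraically routine.
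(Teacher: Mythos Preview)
Your overall strategy—organize both sides as free modules and show the map is lower-triangular with respect to the $x$-degree filtration with injective diagonal entries—is essentially the same as the paper's. However, two points deserve attention.

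First, you complicate matters by working in the multiplicative basis $\{\alpha_{\phi_n}^k\alpha_{\phi_{p^{m-1}}}^{r_{m-1}}\cdots\alpha_{\phi_1}^{r_0}\}$. The paper stays with the additive basis $\{\alpha_{\phi_i}\}_{i\geq 0}$ from Theorem~\ref{thm main simple}, and this makes the bookkeeping transparent: one sees immediately from Proposition~\ref{mfun0} that only $\alpha_{\phi_0},\ldots,\alpha_{\phi_{p^j}}$ contribute, and each coefficient group $H_G^{\zeta_{p^j}-\phi_i}(\pt)$ is cyclic of a computable order by Theorem~\ref{Zhtpy}. Your description of the survivors as ``two leading monomials plus a controlled family of cross terms'' is vague; in fact there are exactly $p^j+1$ contributing basis elements.

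Second, and more seriously, the inference ``$\prod_i u_{\lambda^i}$ restricts to $1$ at $G/e$, hence is a non-zero-divisor in $H_G^\bigstar(\pt)$'' is not valid as stated. The coefficient groups $H_G^{\zeta_{p^j}-\phi_i}(\pt)$ for $i<p^j$ are torsion (isomorphic to $\Z/p^{m-j+1}$), so $\res_e^G$ vanishes on them and cannot detect whether multiplication by a $u$-class product is injective. What you actually need is that the element $u_{\phi_i}a_{\zeta_{p^j}-\phi_i}$ still has order exactly $p^{m-j+1}$ inside the larger cyclic group $H_G^{\zeta_{p^j}-2i}(\pt)\cong \Z/p^{t_{j,i}}$. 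This is what the paper verifies directly using the explicit presentation of $\pi_{\bigstar^e}^G(H\uZ)$ in Theorem~\ref{Zhtpy}: the relation \eqref{eq:au rel Z} does not reduce the order of this particular monomial, so the diagonal entry of the triangular matrix is the injection $\Z/p^{m-j+1}\hookrightarrow \Z/p^{t_{j,i}}$ given by multiplication by $p^{t_{j,i}-m+j-1}$. Your argument needs exactly this order computation to close the gap; the appeal to Proposition~\ref{prop:new u cls rel with a cls} and multiplicativity of $\res_e^G$ does not supply it.
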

	\begin{proof}
		Recall from  the additive decomposition of Theorem \ref{thm main simple} that 
		$$H^{\bigstar}_{G}(P(\mathcal{U}))\cong \bigoplus\limits_{i\geq0}H^{\bigstar -\phi_{i}}_{G}(\pt)\{\alpha_{\phi_i}\}.$$
		The notation here means that  $ \alpha_{\phi_i}$ generates  the factor  $ H^{\bigstar -\phi_{i}}_{G}(\pt) $ of the free $ H^{\bigstar}_{G}(\pt) $-module $ H^{\bigstar}_{G}(P(\mathcal{U})) $. Thus, $H^{\zeta_{p^j}}_{G}(P(\mathcal{U})) \cong \bigoplus\limits_{i= 0}^{p^j} H^{\zeta_{p^j}-\phi_{i}}_{G}(\pt)\{\alpha_{\phi_i}\}$. The higher degree generators ($ \alpha_{\phi_{p^j+1}} $ onwards) can not appear as by Proposition \ref{mfun0} the group $ H^{\zeta_{p^j}-\phi_{p^j+\ell}}_{G}(\pt) $ is zero for $ \ell\ge 1 $. For $i\leq p^j -1 $, we use Theorem \ref{Zhtpy} to note that the element $ a_{\zeta_{p^j}-\phi_i}:= a_{\lam^{i+1}}\cdots a_{\lam^{p^j-1}}a_{\lambda^{p^{j-1}}}$ generates the group  $ H^{\zeta_{p^j}-\phi_{i}}_{G}(\pt)\cong  \Z/p^{m-j+1}$ (look at the discussion following Theorem \ref{Zhtpy}).
		Therefore the element   $a_{\zeta_{p^j}-\phi_{i}}\a_{\phi_{i}}\in H^{\zeta_{p^j}}_{G}(P(\mathcal{U}))$
		also  has order $p^{m-j+1}$.
		Hence, 
		\[H^{\zeta_{p^j}}_{G}(P(\mathcal{U})\cong \Z\bigoplus \Big(\Z/p^{m-j+1}\Big)^{\oplus p^j}.\]
		 Also, we have 
		$$
		H^{\bigstar}_{G}(P(\infty 1_{\C}))\cong \bigoplus\limits_{i\geq0}H^{\bigstar - 2i}_{G}(\pt)\{x^{i}\}.
		$$
		So $H^{\zeta_{p^j}}_{G}(P(\infty1_{\C})) \cong \bigoplus\limits_{i= 0}^{p^j} H^{\zeta_{p^j}-2i}_{G}(\pt)\{x^{i}\}$. By the discussion following Theorem \ref{Zhtpy}, the group  $H^{\zeta_{p^j}-2i}_{G}(\pt) \cong  \Z/p^{t_{j,i}}$ for $i<p^j$, where  
		\[
		t_{j,i}=\max\big[m- \max\{v_p(s_1),\cdots, v_p(s_{p^j-i})\}  ~~|~~ s_1,\cdots, s_{p^{j}-i}\in \{1,\cdots, p^j-1,p^{j-1}\}\big].
		\]
		This tells us 
		\[ H^{\zeta_{p^j}}_{G}(P(\infty 1_{\C}))\cong \Z\bigoplus\Z/p^{t_{j,p^j-1}}\bigoplus\cdots\bigoplus\Z/p^{t_{j,0}} .\]
		We observe that the element 
		$$
		u_{\phi_{i}}a_{\zeta_{p^j}-\phi_{i}} := u_{\lam}u_{\lam^2}\cdots u_{\lam^i}a_{\lam^{i+1}}\cdots a_{\lam^{p^j-1}}a_{\lambda^{p^{j-1}}} \in H^{\zeta_{p^j}-2i}_{G}(\pt) 
		$$
		has order $p^{m-j+1}$.
		So the element $a_{\zeta_{p^j}-\phi_{i}}u_{\phi_{i}}x^{i}\in H^{\zeta_{p^j}}_{G}(P(\infty 1_{\C}))$ is also of order 
		$p^{m-j+1}$ in $\Z/t_{j,i}$. Since $ \res^G_e(\alpha_{\phi_i}) =x^{i}$, we have $\res^G_e(q_{0}^{*}(\a_{\phi_{i}}))=x^{i}$. This  implies  $q_{0}^{*}(\alpha_{\phi_{i}})= u_{\phi_{i}}x^{i} + \Sigma _{l=0}^{i-1}c_lx^{l}$, for some coefficients $ c_l $. Thus 
		$$q_{0}^{*}(a_{\zeta_{p^j}-\phi_{i}}\a_{\phi_{i}})= a_{\zeta_{p^j}-\phi_{i}}q_{0}^{*}(\a_{\phi_{i}}) = a_{\zeta_{p^j}-\phi_{i}}u_{\phi_{i}}x^{i} + a_{\zeta_{p^j}-\phi_{i}}\sum_{l=0}^{i-1}c_lx^{l}. $$
		Therefore $q_{0}^{*}$ as a map  
		$$\Z\bigoplus\big(\Z/p^{m-j+1}\big)^{\oplus p^j} \to \Z\bigoplus\Z/p^{t_{j,p^j-1}}\bigoplus\cdots\bigoplus\Z/p^{t_{j,0}}$$ 
		is a lower triangular matrix of the form
		\[
		\left(
		\begin{array}{ccccc}
			1        &                          &           &  \\
			*	         & k_{j,p^j-1} &           & \text{\huge 0}\\
			\vdots	& \vdots           &    \ddots       &                \\
			*	          &  *                 & \dots  & k_{j,0}
		\end{array}
		\right)
		\]  
		where $k_{j,i}=p^{t_{j,i} - m+j-1}$. Hence $q_{0}^{*}$ is injective at the degree $\zeta_{p^j}$.
	\end{proof}

\begin{mysubsection}{Relations for complex projective spaces}
	Note that if $ d=p^m $, then $ a_{\lambda^d}=0 $ and $ u_{\lambda^d}=1 $, so Proposition \ref{prop:q0 of alp zp d=pm} simplified to  	$ q_0^*(\a_{\phi_n})=\prod_{i=1}^{n-1}x(a_{\lambda^i}+xu_{\lambda^i})	 $. For the group $ C_p $, using Proposition \ref{prop:rel betw a lambda}, this further reduces  to   $ q_0^*(\a_{\phi_p})=x\prod_{i=1}^{p-1}(ia_{\lam}+ u_{\lam}x) 	 $. Using the fact that $ q_0^*(\alpha_{\phi_1})=u_\lambda x $ from \eqref{prop:q0 of alp lamb}, we see that 
	$$ q_0^*(u_\lambda\alpha_{\phi_p}-\alpha_{\phi_1}\prod_{i=1}^{p-1}(ia_\lambda+\alpha_{\phi_1})) =0.
	$$
	Moreover,  Proposition \ref{prop:q0 injective} tells us that $ q_0^* $ is injective. 
	So the relation we obtain for $ C_p $ is
	\[
	u_\lambda\alpha_{\phi_p}-\alpha_{\phi_1}\prod_{i=1}^{p-1}(ia_\lambda+\alpha_{\phi_1}).
	\]
	\smallskip
	
	For general $C_{p^m}$ of order $n=p^m$, there are $m$ relations of the form 
	\[ u_{\lambda^{p^{i-1}}-\lambda^{p^i}}\alpha_{\phi_{p^i}} = \alpha_{\phi_{p^{i-1}}}^p + \mbox{ lower order terms},\]
	for $1\leq i \leq m$. In fact, the proof of Proposition \ref{prop:multi gene} implies that the coefficients of the lower order terms are expressible as a sum of monomials with coefficients  that are linear combinations of products of $a_{\lambda^j}$. The naive idea is to apply $q_0^\ast$ to such an equation to determine all the coefficients. However,  the expression in Proposition \ref{prop:q0 of alp} does not directly yield a simple closed relation. We are able to obtain a simple expression after mapping to  $\Z/p$-coefficients. 
	
	The first observation when we look at $\Z/p$-coefficients is that $q_0^\ast$ is no longer injective. For, in the proof of Proposition \ref{prop:q0 injective}, the diagonal entries in the lower triangular matrix other than at the top corner, turns out to be $0\pmod{p}$. We use the formula for $q_0^\ast$ and that it is injective with $\Z$-coefficients. Let $\RR_d$ denote the algebra 
	\[ \Z[u_{\lambda^i}, a_{\lambda^j}, u_{\lambda^{p^{d-1}}-\lambda^{p^d}}]/I,\]
where $I$ is the ideal generated by the relations \eqref{eq:a rel Z}, those in Proposition \ref{prop:un5} and	
\[	  u_{\lambda^{p^{d-1}}-\lambda^{p^d}}u_{\lambda^{p^d}}= u_{\lambda^{p^{d-1}}}, ~ u_{\lambda^{p^{d-1}}-\lambda^{p^d}}a_{\lambda^{p^d}}= p a_{\lambda^{p^{d-1}}}, \]
which maps to $\pi_{-\bigstar} H\uZ=H^\bigstar_G(\pt)$. The algebra $\RR_d$ contains the classes $u_{\lambda^i}$ and $a_{\lambda^j}$ but they are not required to satisfy the relation \eqref{eq:au rel Z}.  	Form the algebraic $q_0^\ast$ map 
\[Q_0: \RR_d[\alpha_{\phi_{p^j}}\mid 0\leq j\leq m] \to \RR_d[x]\]
given by the formula in Proposition \ref{prop:q0 of alp}. In the absence of the relation \eqref{eq:au rel Z} in $\RR_d$, the lower triangular matrix in the proof of Proposition \ref{prop:q0 injective} gets replaced by one where the diagonal entries are inclusions of the corresponding summand. This becomes injective even after tensoring with $\Z/p$. The algebra $\RR_d[\alpha_{\phi_{p^j}}\mid 0\leq j\leq m]$ is denoted $\RR_d(P(\UU))$. 

 We thus work with the diagram 
\[ \xymatrix{ \RR_d(P(\UU)) \ar[d]^{Q_0} \ar[r]^-{\nu_p} & \Z/p\otimes \RR_d(P(\UU))\ar[d]^{Q_0}\\	
	 \RR_d[x]  \ar[r]^-{\nu_p} & \Z/p\otimes \RR_d[x], }\]
	 and seek relations $\chi$ which maps to $0$ in $\Z/p \otimes \RR_d[x]$. It follows that $\chi \equiv 0 \pmod{p}$ in $\RR_d(P(\UU))$ and thus gives a relation in $H^\bigstar_G(P(\UU);\uZp)$. We note from Proposition \ref{prop:rel betw a lambda}  that 
	 \begin{myeq} \label{aident} 
	 a_{\lambda^{kp^{r-1}+i}} = (1+kp^{r-1}\cdot i^{-1}) a_{\lambda^i}, \mbox{ and hence, } a_{\lambda^{kp^{r-1}+i}} \equiv a_{\lambda^i} \pmod{p}, \mbox{ for } i< p^{r-1}.
	 \end{myeq}
	   The following is a consequence of the identity $ \prod_{i=1}^{p-1}(x+i)\equiv x^{p-1}-1 \pmod p $.
	\begin{lemma}\label{numbthyres}
		$$
		\prod_{i=1}^{p-1}(ia_{\lambda^{p^{r-1}}}+xu_{\lambda^{p^{r-1}}})=(xu_{\lambda^{p^{r-1}}})^{p-1}-(a_{\lambda^{p^{r-1}}})^{p-1}.
		$$
	\end{lemma}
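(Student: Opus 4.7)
The plan is to set $b := a_{\lambda^{p^{r-1}}}$ and $c := x\,u_{\lambda^{p^{r-1}}}$, and reduce the claimed identity to the polynomial equality
$$\prod_{i=1}^{p-1}(ib + c) \;=\; c^{p-1} - b^{p-1}$$
in the polynomial ring $\F_p[b,c]$. Since both sides involve only products of the two classes (with $\Z/p$-coefficients, as indicated by the setup in \S\ref{cohring}), once this polynomial identity is established, substituting back yields the lemma.

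First I would invoke the cited fact $\prod_{i=1}^{p-1}(T+i) \equiv T^{p-1}-1 \pmod p$. This holds because in $\F_p[T]$ both polynomials are monic of degree $p-1$ and share all roots: $-1,-2,\dots,-(p-1)$ is a complete list of nonzero residues mod $p$, each of which satisfies $T^{p-1}=1$ by Fermat's little theorem.

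Next, to homogenize into a two-variable identity, I would work in the rational function field $\F_p(b)$ and substitute $T = c/b$, obtaining
$$\prod_{i=1}^{p-1}\!\Big(\tfrac{c}{b} + i\Big) \;=\; \Big(\tfrac{c}{b}\Big)^{p-1} - 1.$$
Multiplying through by $b^{p-1}$ clears denominators and produces the desired homogeneous identity $\prod_{i=1}^{p-1}(c+ib) = c^{p-1}-b^{p-1}$ in $\F_p[b,c]$. Substituting the values of $b$ and $c$ above completes the proof.

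There is essentially no obstacle beyond the formal manipulation: the only point to check is that the polynomial identity can be transplanted into the cohomology ring, but since the equation only multiplies the two classes $a_{\lambda^{p^{r-1}}}$ and $x\,u_{\lambda^{p^{r-1}}}$ inside a commutative ring of characteristic $p$, this transfer is automatic and no additional relations from $\RR_d(P(\UU))$ need to be invoked.
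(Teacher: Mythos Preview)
Your proof is correct and follows exactly the approach indicated in the paper, which simply states that the lemma is a consequence of the identity $\prod_{i=1}^{p-1}(x+i)\equiv x^{p-1}-1 \pmod p$ without further elaboration. Your homogenization via the substitution $T=c/b$ is the natural way to fill in this one-line remark.
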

We write $P(z,w)=(z-w)^{p-1}-w^{p-1}$. Define the following notations
	 \begin{myeq}\label{btnot}
	 \begin{aligned} 
	 &\BB_r=\prod_{i=1}^{p^{r}-1}(a_{\lam^{i}}+u_{{\lam}^{i}}x) \in \RR_d[x] , \\ 
&\TT_r = \a_{\phi_{p^{r}}} +  \prod_{i=1}^{p^r} a_{\lambda^i} \in \RR_d(P(\UU)), \\
&\T_r=Q_0(\TT_r) = \BB_r \cdot (xu_{\lambda^{p^r}}+a_{\lambda^{p^r}}) \pmod{p}, \mbox{ by Proposition \ref{prop:q0 of alp zp}}, \\
& \A_0=P(\T_0,a_\lambda), \mbox{ and inductively, } \A_j= P(\T_j,a_{\lambda^{p^j}}\prod_{i=0}^{j-1} \A_i)\\
& \cA_0=P(\TT_0,a_\lambda), \mbox{ and, } \cA_j= P(\TT_j,a_{\lambda^{p^j}}\prod_{i=0}^{j-1} \cA_i), \mbox{ so that } Q_0(\cA_j)=\A_j.	 \end{aligned}
\end{myeq}
	We now have the following relation with $\Z/p$ coefficients.
	\[ \begin{aligned} 
	\BB_r & = \prod_{i=1}^{p^r -1} (a_{\lam^{i}}+u_{{\lam}^{i}}x) \\
	         & = \prod_{i=1, p^{r-1}\nmid i}^{p^r -1} (a_{\lam^{i}}+u_{{\lam}^{i}}x) \prod_{j=1}^{p-1} (a_{\lam^{jp^{r-1}}}+u_{{\lam}^{jp^{r-1}}}x) \\ 
	         &=\BB_{r-1}^p \prod_{j=1}^{p-1} (ja_{\lam^{p^{r-1}}}+u_{{\lam}^{p^{r-1}}}x) \mbox{ by \eqref{aident}, and Proposition \ref{prop:rel betw a lambda}} \\ 
	         &=\BB_{r-1}^p \big((xu_{\lambda^{p^{r-1}}})^{p-1}- a_{\lambda^{p^{r-1}}}^{p-1}\big)\\
	         &=\BB_{r-1}\big((\T_{r-1}- \BB_{r-1}a_{\lambda^{p^{r-1}}})^{p-1} - (\BB_{r-1}a_{\lambda^{p^{r-1}}})^{p-1}\big)\\
	         &= \BB_{r-1} P(\T_{r-1},a_{\lambda^{p^{r-1}}}\BB_{r-1}).  
	\end{aligned}\]
	From the expression, it inductively follows that 
	\begin{myeq}\label{Bexp}
	\BB_r= \prod_{i=0}^{r-1} \A_r.
	\end{myeq}
	We finally obtain
	\begin{prop}\label{r-reln} 
	With $\Z/p$-coefficients, the class $\a_{\phi_{p^{r}}}$ satisfies  the following relation
		\[
		u_{\lam^{p^{r-1}}-\lam^{p^{r}}}\a_{\phi_{p^{r}}}= \TT_{r-1}^p - a_{\lambda^{p^{r-1}}}^{p-1}\TT_{r-1}\big(\prod_{i=0}^{r-2} \cA_i\big)^{p-1}.
		\]
	\end{prop}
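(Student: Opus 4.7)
The plan is to apply the algebraic model $Q_0$ of the geometric restriction $q_0^*$ to both sides of the proposed relation, verify the resulting identity in $\RR_d[x]\otimes\Z/p$, and invoke the injectivity of $Q_0\pmod{p}$ on $\RR_d(P(\UU))$ established just before Lemma \ref{numbthyres} to promote it to the desired identity in $H^\bigstar_G(P(\UU);\uZp)$.

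For the left-hand side, Proposition \ref{prop:q0 of alp zp} gives
\[
Q_0(\a_{\phi_{p^r}}) = \prod_{i=1}^{p^r}(a_{\lam^i}+xu_{\lam^i}) - \prod_{i=1}^{p^r}a_{\lam^i} = \T_r - \prod_{i=1}^{p^r}a_{\lam^i},
\]
where $\T_r=\BB_r(a_{\lam^{p^r}}+xu_{\lam^{p^r}})$ by \eqref{btnot}. Multiplying by $u_{\lam^{p^{r-1}}-\lam^{p^r}}$ and applying Proposition \ref{prop:new u cls rel with a cls} (so that $u_{\lam^{p^{r-1}}-\lam^{p^r}}a_{\lam^{p^r}}=p\,a_{\lam^{p^{r-1}}}\equiv 0$ and $u_{\lam^{p^{r-1}}-\lam^{p^r}}u_{\lam^{p^r}}=u_{\lam^{p^{r-1}}}$), the summand $\prod_{i=1}^{p^r}a_{\lam^i}$ is killed (it contains $a_{\lam^{p^r}}$ as a factor) and $u_{\lam^{p^{r-1}}-\lam^{p^r}}\T_r\equiv xu_{\lam^{p^{r-1}}}\BB_r\pmod{p}$, yielding
\[
Q_0\bigl(u_{\lam^{p^{r-1}}-\lam^{p^r}}\a_{\phi_{p^r}}\bigr)\equiv xu_{\lam^{p^{r-1}}}\BB_r\pmod{p}.
\]

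For the right-hand side, $Q_0(\TT_{r-1})=\T_{r-1}$ and $Q_0(\cA_i)=\A_i$ by \eqref{btnot}, and $\prod_{i=0}^{r-2}\A_i=\BB_{r-1}$ by \eqref{Bexp}, so its image under $Q_0$ is $\T_{r-1}^p - a_{\lam^{p^{r-1}}}^{p-1}\T_{r-1}\BB_{r-1}^{p-1}$. To match the two sides, I will rewrite $xu_{\lam^{p^{r-1}}}\BB_r=(xu_{\lam^{p^{r-1}}}\BB_{r-1})\A_{r-1}$ using $\BB_r=\BB_{r-1}\A_{r-1}$. Since $\T_{r-1}=\BB_{r-1}(a_{\lam^{p^{r-1}}}+xu_{\lam^{p^{r-1}}})$ gives $xu_{\lam^{p^{r-1}}}\BB_{r-1}=\T_{r-1}-a_{\lam^{p^{r-1}}}\BB_{r-1}$, substituting the definition $\A_{r-1}=(\T_{r-1}-a_{\lam^{p^{r-1}}}\BB_{r-1})^{p-1}-(a_{\lam^{p^{r-1}}}\BB_{r-1})^{p-1}$ and applying the Freshman's dream $(A-B)^p\equiv A^p-B^p\pmod{p}$ produces exactly $\T_{r-1}^p - a_{\lam^{p^{r-1}}}^{p-1}\T_{r-1}\BB_{r-1}^{p-1}$.

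Since the two images agree modulo $p$ in $\RR_d[x]$, the injectivity of $Q_0\pmod{p}$ on $\RR_d(P(\UU))$ finishes the argument. The main delicate point is the computation $u_{\lam^{p^{r-1}}-\lam^{p^r}}\T_r\equiv xu_{\lam^{p^{r-1}}}\BB_r\pmod{p}$, where the $p$-divisible summand $pa_{\lam^{p^{r-1}}}\BB_r$ drops out; this is precisely the phenomenon that prevents an equally clean closed-form integer lift and the reason the statement is formulated with $\uZp$-coefficients.
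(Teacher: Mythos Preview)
Your proof is correct and follows essentially the same approach as the paper: compute $Q_0$ on both sides, reduce to the identity $xu_{\lam^{p^{r-1}}}\BB_r \equiv \T_{r-1}^p - a_{\lam^{p^{r-1}}}^{p-1}\T_{r-1}\BB_{r-1}^{p-1}\pmod p$, and conclude via injectivity of $Q_0\pmod p$. The only cosmetic difference is that the paper uses the factorization $\BB_r=\BB_{r-1}^p\bigl((xu_{\lam^{p^{r-1}}})^{p-1}-a_{\lam^{p^{r-1}}}^{p-1}\bigr)$ directly, whereas you route through $\BB_r=\BB_{r-1}\A_{r-1}$ from \eqref{Bexp}; these are the same identity since $\A_{r-1}=\BB_{r-1}^{p-1}\bigl((xu_{\lam^{p^{r-1}}})^{p-1}-a_{\lam^{p^{r-1}}}^{p-1}\bigr)$.
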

	   
	 \begin{proof}
	 As observed above, it suffices to prove with $\Z/p$-coefficients 
	 \[Q_0(u_{\lam^{p^{r-1}}-\lam^{p^{r}}}\a_{\phi_{p^{r}}})= \T_{r-1}^p - a_{\lambda^{p^{r-1}}}^{p-1}\T_{r-1}\big(\prod_{i=0}^{r-2} \A_i\big)^{p-1}= \T_{r-1}^p - a_{\lambda^{p^{r-1}}}^{p-1}\T_{r-1}\BB_{r-1}^{p-1}.\]
	We verify 
	 \[\begin{aligned}
	 Q_0(u_{\lam^{p^{r-1}}-\lam^{p^{r}}}\a_{\phi_{p^{r}}})&= u_{\lam^{p^{r-1}}-\lam^{p^{r}}} \big(\prod_{j=1}^{p^r}(xu_{\lambda^j}+a_{\lambda^j}) - \prod_{j=1}^{p^r} a_{\lambda^j} \big)\\
	 &= \BB_r x u_{\lambda^{p^{r-1}}} \\ 
	 &=\BB_{r-1}^p \big((xu_{\lambda^{p^{r-1}}})^p- a_{\lambda^{p^{r-1}}}^{p-1}x u_{\lambda^{p^{r-1}}}\big)\\
	 &=\T_{r-1}^p - \BB_{r-1}^p(a_{\lambda^{p^{r-1}}}^p + a_{\lambda^{p^{r-1}}}^{p-1}x u_{\lambda^{p^{r-1}}})\\
	 &=\T_{r-1}^p - a_{\lambda^{p^{r-1}}}^{p-1} \T_{r-1}\BB_{r-1}^{p-1}.
	 \end{aligned}\]
	 This completes the proof. 
	 \end{proof}  
	 
	 \smallskip 
	
	We now summarize the computation in the following theorem. 
	\begin{thm}\label{ringstrs1}
	The cohomology ring 
	\[ H^\bigstar_G (B_GS^1;\uZp) \cong H^\bigstar_G(\pt;\uZp)[\a_{\phi_0},\cdots,\a_{\phi_m}]/(\rho_1,\cdots, \rho_m).\]
	The relations $\rho_r$ are described by 
	\[\rho_r= u_{\lam^{p^{r-1}}-\lam^{p^{r}}}\a_{\phi_{p^{r}}}- \TT_{r-1}^p + a_{\lambda^{p^{r-1}}}^{p-1}\TT_{r-1}\big(\prod_{i=0}^{r-2} \cA_i\big)^{p-1}, \]
	where $\TT_j$ and $\cA_j$ are defined in \eqref{btnot}.
	\end{thm}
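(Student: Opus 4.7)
The plan is to assemble the results already established in this section. By Proposition~\ref{prop:multi gene}, since the divisors of $n = p^m$ are precisely $p^0, p^1, \ldots, p^m$, the classes $\alpha_{\phi_{p^0}}, \ldots, \alpha_{\phi_{p^m}}$ generate $H^\bigstar_G(B_GS^1;\uZp)$ as an $H^\bigstar_G(\pt;\uZp)$-algebra. By Proposition~\ref{r-reln}, each $\rho_r$ vanishes in the cohomology. These two facts together yield a surjection
\[\overline{\Phi}\colon H^\bigstar_G(\pt;\uZp)[\alpha_{\phi_{p^0}},\ldots,\alpha_{\phi_{p^m}}]/(\rho_1,\ldots,\rho_m) \twoheadrightarrow H^\bigstar_G(B_GS^1;\uZp),\]
so the remaining task is to verify injectivity of $\overline{\Phi}$.

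For injectivity, my plan is a basis-comparison argument. The target is a free $H^\bigstar_G(\pt;\uZp)$-module by Theorem~\ref{thm main simple}, and a free basis is given by the monomials
\[\mathcal{S} = \left\{ \alpha_{\phi_{p^m}}^k \alpha_{\phi_{p^{m-1}}}^{r_{m-1}} \cdots \alpha_{\phi_{p^0}}^{r_0} \;\middle|\; k \ge 0,\ 0 \le r_j \le p-1 \right\},\]
as follows from the change-of-basis argument appearing in the proof of Proposition~\ref{prop:multi gene}. I would then show that the source is spanned (over $H^\bigstar_G(\pt;\uZp)$) by the preimages of $\mathcal{S}$. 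Using that we work in characteristic $p$, the Frobenius identity $\TT_{r-1}^p = \alpha_{\phi_{p^{r-1}}}^p + \bigl(\prod_{i=1}^{p^{r-1}} a_{\lambda^i}\bigr)^p$ rewrites the relation $\rho_r$ as
\[\alpha_{\phi_{p^{r-1}}}^p \;\equiv\; u_{\lambda^{p^{r-1}}-\lambda^{p^r}}\,\alpha_{\phi_{p^r}} \;+\; (\text{terms of strictly smaller exponent in } \alpha_{\phi_{p^{r-1}}}),\]
where the correction terms involve only $\alpha_{\phi_{p^j}}$ for $j \le r-1$ (through the inductively defined $\cA_i$). Iteratively applying these rewriting rules, any monomial in the quotient can be reduced to an $H^\bigstar_G(\pt;\uZp)$-linear combination of elements of $\mathcal{S}$. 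Since $\overline{\Phi}$ sends this spanning set to the prescribed free basis of the target, it must be an isomorphism.

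The principal obstacle is verifying that the cascade of rewritings terminates cleanly and yields precisely the set $\mathcal{S}$. Because reducing $\alpha_{\phi_{p^{r-1}}}^p$ can increase the exponents of $\alpha_{\phi_{p^j}}$ for $j < r-1$ (through the $\cA_i$ factors), the induction must be set up with care---for instance by processing the smallest violating index at each step and tracking how exponents of higher-indexed generators remain non-negative while lower-indexed exponents stay controlled. Fixing an $RO(G)$-grading reduces the problem to a finite set of monomials in each graded piece, so well-foundedness of the rewriting follows once the correct partial order (lexicographic on the exponent tuple processed from lowest to highest index, say) is chosen; formalizing this inductive bookkeeping is the only nontrivial step remaining in the proof.
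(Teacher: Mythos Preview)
Your proposal is correct and matches the paper's approach: the paper presents Theorem~\ref{ringstrs1} as a summary of the preceding propositions without a separate proof, and you have supplied precisely the details it leaves implicit (surjectivity from Proposition~\ref{prop:multi gene}, the relations from Proposition~\ref{r-reln}, and injectivity via the free basis $\mathcal{S}$ coming from the change-of-basis remark after Proposition~\ref{prop:multi gene}).

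For the termination issue you flag, a cleaner invariant than a lexicographic order is the $\alpha$-weight $w\big(\prod_j \alpha_{\phi_{p^j}}^{e_j}\big) = \sum_j e_j p^j$: since every coefficient produced by the rewriting is a product of $a$- and $u$-classes (all of non-negative underlying integer degree), $w$ cannot increase, and the only $w$-preserving step replaces $\alpha_{\phi_{p^{r-1}}}^p$ by $u_{\lambda^{p^{r-1}}-\lambda^{p^r}}\alpha_{\phi_{p^r}}$, which strictly decreases $\sum_j e_j$; the pair $\big(w,\sum_j e_j\big)$ in $\mathbb{N}^2$ with lexicographic order then gives a well-founded descent.
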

	\end{mysubsection}

	\begin{mysubsection}{Ring Structure of $B_{G}SU(2)$}
	As in the complex case, we get the multiplicative generators $\beta_{2\phi_d}$ of $H^{\bigstar}_{G}(P(\mathcal{U}_{\Bbb H}))$ at the degrees $2\phi_{d}$ for divisors $ d $ of $ n $. The construction is same as the previous construction of the class $\a_{\phi_{d}}$. Consider  the representation 
		$$W_d:=1_{\mathbb{H}} + \psi^{1} + \psi^{2}+\dots + \psi^{d}.
		$$
		We have the cofibre sequence 
		$$P(W_{d-1})_{+}\ra P(W_{d})_+\ra S^{\lam^{-d}\otimes_{\C}W_{d-1}}.
		$$
		At degree ${2\phi_{d}}$ the associated long exact sequence is 
		$$\cdots \ra \tH^{2\phi_{d}-1}(P(W_{d-1})_+)\ra \tH^{2\phi_{d}}(S^{2\phi_{d}})\ra \tH^{2\phi_{d}}(P(W_{d})_+)\ra \tH^{2\phi_{d}}(P(W_{d-1})_+)\ra  \cdots
		$$
		since $\lam^{-d}\otimes_\C \sum_{i=0}^{d-1}(\lam^{i}+\lam^{-i})= \sum_{i=0}^{d-1}2\lam^{i-d} = 2\phi_{d}$. We define the classes $\beta_{2\phi_{d}}$ to be the image of $1$ in $ \tH^{2\phi_{d}}(S^{2\phi_{d}}) \cong \Z$. By induction, we extend $\beta_{2\phi_{d}}$ to get the generator of $B_{G}SU(2)$ at degree $2\phi_{d}$. We use the notation  $ \mathcal{L}_{j} $ for $ (\beta_{2\phi_{p^{j}}} +  \prod_{i=1}^{p^j} (a_{\lambda^i})^2 ) $. As in the complex case with $\uZp$-coefficients we have 
		\begin{thm}\label{ringstr2} 
The cohomology ring 
\[ H^\bigstar_G (B_GS^3;\uZp) \cong H^\bigstar_G(\pt;\uZp)[\beta_{2\phi_0},\cdots,\beta_{2\phi_m}]/(\mu_1,\cdots, \mu_m).\]
	The relations $\mu_r$ are described by 
	\[\mu_r= (u_{\lam^{p^{r-1}}-\lam^{p^{r}}})^2\beta_{2\phi_{p^{r}}}- \LL_{r-1}^p + a_{\lambda^{p^{r-1}}}^{2(p-1)}\LL_{r-1}\big(\prod_{i=0}^{r-2} \CC_i\big)^{p-1}, \]
	where $\CC_i$ is inductively defined as $\CC_0=P(\LL_0,a^2_\lambda)$, and,  $\CC_j= P(\LL_j,a^2_{\lambda^{p^j}}\prod_{i=0}^{j-1} \CC_i)$.
		\end{thm}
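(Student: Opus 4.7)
The plan is to follow the strategy used in the complex case (Theorem \ref{ringstrs1}) with systematic modifications reflecting the ``doubling'' present in the quaternionic setting. First I would establish a quaternionic analog of Proposition \ref{prop:multi gene}: the classes $\{\beta_{2\phi_d} \mid d \text{ divides } n\}$ generate $H^\bigstar_G(P_\Hyp(\UU_\Hyp);\uZp)$ as an algebra over $H^\bigstar_G(\pt;\uZp)$. This uses the additive decomposition from Theorem \ref{quatadd} indexed by the representations $W_i$, together with the same $p$-adic digit expansion argument and the negative fixed-point vanishing \eqref{grzero}. The uniqueness of the lifts $\beta_{2\phi_d}$ through the successive quaternionic cell attachments is verified by the vanishing of $H_G^{2\phi_d - W_k}(\pt)$ and $H_G^{2\phi_d+1-W_k}(\pt)$, which follows from Proposition \ref{thm:neg htpy gr} and Theorem \ref{thm:BG Cn} applied at the shifted gradings.

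Second, I would compute the restrictions to the trivial fixed-point inclusion $q_0\colon P_\Hyp(\Hyp^\infty) \to P_\Hyp(\UU_\Hyp)$, where $H^\bigstar_G(P_\Hyp(\Hyp^\infty)) \cong H^\bigstar_G(\pt)[y]$ with $|y|=4$. The computation proceeds along the same lines as Propositions \ref{prop:cp tau ome1}--\ref{prop:q0 of alp zp}, using the quaternionic cellular filtration of $P_\Hyp(\UU_\Hyp)$ and the intermediate subspaces $P_\Hyp(V_{I,k})$ built from quaternionic subrepresentations. The key simplification is that as complex representations $\psi^i = \lambda^i + \lambda^{-i}$, so each quaternionic cell contributes two paired complex factors. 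In $\uZp$-coefficients, the identifications $a_{\lambda^{-i}} \equiv a_{\lambda^i}$ from Proposition \ref{prop:rel betw a lambda} and the Euler-class pullback $y \leftrightarrow x^2$ arising from the canonical map $\C P^\infty \to \Hyp P^\infty$ collapse each paired factor $(a_{\lambda^i}+u_{\lambda^i}x)(a_{\lambda^{-i}}+u_{\lambda^{-i}}x)$ into $(a_{\lambda^i}^2 + u_{\lambda^i}^2 y)$. This yields
\[ q_0^*(\beta_{2\phi_d}) \equiv \prod_{i=1}^d \big(a_{\lambda^i}^2 + u_{\lambda^i}^2 y\big) - \prod_{i=1}^d a_{\lambda^i}^2 \pmod{p}, \]
which is the precise ``square'' of Proposition \ref{prop:q0 of alp zp}.

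Third, the injectivity analog of Proposition \ref{prop:q0 injective} at the degrees $2\zeta_{p^j}$ is obtained by the same triangular-matrix argument, now using $y^i$ in place of $x^i$ with doubled gradings. With injectivity in hand, the derivation of the relations then mirrors the computation leading to Proposition \ref{r-reln}, with each occurrence of $a_{\lambda^j}$ replaced by $a_{\lambda^j}^2$ and each $u_{\lambda^j}$ by $u_{\lambda^j}^2$. Applying the identity $\prod_{i=1}^{p-1}(ix+y) \equiv y^{p-1}-x^{p-1} \pmod p$ to the squared product inductively gives the factorization analogous to \eqref{Bexp}, namely $\BB^\Hyp_r = \prod_{j=0}^{r-1} \CC_j$, from which the stated form of $\mu_r$ reads off by the same computation used in the proof of Proposition \ref{r-reln}. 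The main obstacle I anticipate is the careful bookkeeping of the identifications $a_{\lambda^{-i}} \equiv \pm a_{\lambda^i}$ and $u_{\lambda^{-i}} \equiv \pm u_{\lambda^i}$ with correct signs coming from Proposition \ref{prop:un5}, and verifying that these sign issues genuinely disappear modulo $p$ so that the paired complex factors collapse cleanly into the squared quaternionic expressions defining $\LL_r$ and $\CC_r$; a secondary concern is checking that the $\beta_{2\phi_0}$ normalization and the interaction between the quaternionic representation structure and the realified $a_{\lambda^i}$ classes do not introduce additional boundary terms in the inductive step.
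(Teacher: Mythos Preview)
Your proposal is correct and follows essentially the same approach as the paper, which in fact gives no proof beyond the remark ``As in the complex case with $\uZp$-coefficients we have'' before stating the theorem. Your plan to rerun the arguments of Propositions \ref{prop:multi gene}, \ref{prop:cp tau ome1}--\ref{prop:q0 of alp zp}, \ref{prop:q0 injective}, and \ref{r-reln} with the systematic doubling $a_{\lambda^i}\mapsto a_{\lambda^i}^2$, $u_{\lambda^i}\mapsto u_{\lambda^i}^2$, $x\mapsto y$ is exactly what the paper intends, and your flagged concern about the sign identifications $a_{\lambda^{-i}}\equiv \pm a_{\lambda^i}$ is the only place where care is needed (the squaring absorbs the signs, which is why the final formulas involve $a_{\lambda^i}^2$ cleanly).
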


	\end{mysubsection}

	We conclude this section with the ring structure computation of ${\C P^\infty_\tau}$.
	\begin{mysubsection}{Ring Structure for ${\C P^\infty_\tau}$} Recall  the cofibre sequence from Proposition  \ref{prop:cpnconj} 
		\[
		{	\C P^{n-1}_\tau}_+\hookrightarrow  {\C P^n_\tau}_+\xrightarrow{\chi} S^{n+n\sigma}.
		\]
		This implies the long exact sequence 
		\[
		\cdots \to \tH_{C_2}^{n+n\sigma}(S^{n+n\sigma})\xrightarrow{\chi^*} \tH_{C_2}^{n+n\sigma}({\C P^n_\tau}_+)\to  \tH_{C_2}^{n+n\sigma}({\C P^{n-1}_\tau}_+)\to \cdots
		\]
		Observe  that $ \chi^*(1) $ is nonzero where $ 1\in \tH_{C_2}^{n+n\sigma}(S^{n+n\sigma})\cong \Z $. Let $ \epsilon_{n+n\sigma}\in \tH_{C_2}^{n+n\sigma}({\C P^n_\tau}_+) $ be the element $ \chi^*(1) $.  As the restriction of $ \chi^* $ to the orbit $ C_2/e $ is an  isomorphism, we have $ \res^{C_2}_e(\epsilon_{n+n\sigma})=x^n \in \tH^{2n}(\C P^n)$. We claim $ H^\bigstar_{C_2}({\C P^\infty_\tau}_+)$ is  the polynomial ring  $H^\bigstar_{C_2}(\pt)[\epsilon_{1+\sigma}] $. This follows from the fact that $ \uH^{n+n\sigma}_{C_2}({\C P^\infty_\tau}_+)\cong \uZ $ and $ \res^{C_2}_e(\epsilon_{n+n\sigma})=\res^{C_2}_e(\epsilon^n_{n+n\sigma}) $. Hence $ \epsilon_{n+n\sigma}=\epsilon^n_{1+\sigma} $. Therefore, 
		\begin{thm}\label{cptau2}
		We have an isomorphism of cohomology rings
		\[H^\bigstar_{C_2}(\C P^\infty_\tau)\cong H^\bigstar_{C_2}(\pt)[\epsilon_{1+\sigma}].\]
		\end{thm}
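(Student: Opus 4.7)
The plan is to leverage the additive decomposition from Proposition \ref{prop:cpnconj}, which gives $H\uZ\smas {\C P^\infty_\tau}_+ \simeq \bigvee_{i\geq 0} H\uZ \smas S^{i+i\sigma}$. Consequently $H^\bigstar_{C_2}({\C P^\infty_\tau}_+)$ is free as a module over $H^\bigstar_{C_2}(\pt)$ on one generator in each degree $i+i\sigma$, with $\uH_{C_2}^{n+n\sigma}({\C P^\infty_\tau}_+)\cong \uZ$. The polynomial ring structure will follow once we produce a single class $\epsilon_{1+\sigma}$ whose $n$-th power is a module generator in degree $n+n\sigma$ for every $n$.

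First I will construct the class $\epsilon_{n+n\sigma}$ as in the excerpt. The cofibre sequence
\[ {\C P^{n-1}_\tau}_+ \hookrightarrow {\C P^n_\tau}_+ \xrightarrow{\chi} S^{n+n\sigma}\]
yields a long exact sequence in which the group $\tH^{n+n\sigma}_{C_2}({\C P^{n-1}_\tau}_+)$ vanishes, since from the additive decomposition its contributions come from $H^{n+n\sigma-i-i\sigma}_{C_2}(\pt)$ with $i<n$, all of which lie in gradings with strictly positive fixed-point dimensions, hence are zero by \eqref{grzero}. Therefore $\chi^*$ is injective at degree $n+n\sigma$, and I define $\epsilon_{n+n\sigma}:=\chi^*(1)$, where $1\in \tH_{C_2}^{n+n\sigma}(S^{n+n\sigma})\cong \Z$. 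By the Mackey functor naturality of $\chi^*$ and the fact that $\res^{C_2}_e$ is an isomorphism at the level of the cofibre sequence evaluated at $G/e$, I obtain $\res_e^{C_2}(\epsilon_{n+n\sigma}) = x^n$, where $x\in H^2(\C P^\infty)$ is the usual generator.

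Next I will show that $\epsilon_{n+n\sigma}=\epsilon_{1+\sigma}^n$. Both sides lie in $\uH^{n+n\sigma}_{C_2}({\C P^\infty_\tau}_+)(C_2/C_2)\cong \Z$; since the Mackey functor in this degree is constantly $\uZ$, the restriction map to $C_2/e$ is the identity on $\Z$. As $\res^{C_2}_e$ is multiplicative and $\res^{C_2}_e(\epsilon_{1+\sigma})=x$, the product $\epsilon_{1+\sigma}^n$ also restricts to $x^n$, forcing $\epsilon_{n+n\sigma}=\epsilon_{1+\sigma}^n$. Since $\epsilon_{n+n\sigma}$ generates the free $H^\bigstar_{C_2}(\pt)$-summand of degree $n+n\sigma$ in the additive decomposition, the ring $H^\bigstar_{C_2}({\C P^\infty_\tau})$ is freely generated over $H^\bigstar_{C_2}(\pt)$ by the single class $\epsilon_{1+\sigma}$.

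The main step to verify carefully is that $\epsilon_{n+n\sigma}$ really is the module generator predicted by the additive splitting (not merely some nonzero multiple of it). This is handled by the fact that $\chi^*$ at degree $n+n\sigma$ is an isomorphism of copies of $\Z$, so $\chi^*(1)$ generates, and by checking this compatibly with the restriction map. Once that is in hand the rest of the argument is formal, and Theorem \ref{cptau2} follows.
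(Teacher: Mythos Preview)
Your proof is correct and follows essentially the same route as the paper: construct $\epsilon_{n+n\sigma}=\chi^*(1)$ from the cofibre sequence, observe $\res^{C_2}_e(\epsilon_{n+n\sigma})=x^n$, and use that $\uH^{n+n\sigma}_{C_2}({\C P^\infty_\tau}_+)\cong\uZ$ (so restriction to $G/e$ is injective) to deduce $\epsilon_{n+n\sigma}=\epsilon_{1+\sigma}^n$. One small remark: when you argue that $H^{(n-i)+(n-i)\sigma}_{C_2}(\pt)=0$ for $i<n$, the relevant vanishing is \eqref{lezero} (applied to $-(n-i)(1+\sigma)$ in homotopy), not \eqref{grzero}; the conclusion is unaffected.
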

	\end{mysubsection}

\end{document}